\documentclass[reqno]{amsart}
\usepackage{amscd}
\usepackage{amsfonts}
\usepackage{amsmath} 
\usepackage{amssymb}
\usepackage{amsthm}
\usepackage{bm}
\usepackage{bbm}         
\usepackage{color}
\usepackage{comment}
\usepackage{dsfont}
\usepackage{enumerate}
\usepackage{fancyhdr}
\usepackage{graphicx}
\usepackage[colorlinks=true, linkcolor=blue, citecolor=blue, urlcolor=blue, breaklinks=true]{hyperref}
\usepackage{latexsym}
\usepackage{mathtools}
\usepackage{pifont}
\usepackage{quiver}
\usepackage{scalerel}
\usepackage{setspace}
\usepackage{stmaryrd}
\usepackage{tabularx}
\usepackage{tikz}
    \usetikzlibrary{decorations.markings}
    \usetikzlibrary{cd}
    \usetikzlibrary{patterns}
    \tikzset{anchorbase/.style={baseline={([yshift=-0.5ex]current bounding box.center)}}}
    \tikzset{wipe/.style={white,line width=4pt}}
    \tikzset{anchorbase/.style={baseline={([yshift=-0.5ex]current bounding box.center)}}}
    \tikzset{every picture/.style={line width=0.5pt}} 
\usepackage{verbatim}
\usepackage{wasysym}
\usepackage[all]{xy}
\usepackage[vcentermath]{youngtab}

\theoremstyle{plain}
\newtheorem{lemma}{Lemma}[section]
\newtheorem{proposition}[lemma]{Proposition}

\newtheorem{theorem}[lemma]{Theorem}

\theoremstyle{definition}
\newtheorem{definition}[lemma]{Definition}

\theoremstyle{remark}
\newtheorem{remark}[lemma]{Remark}

\definecolor{lightorange}{RGB}{220, 162, 55}
\definecolor{lightblue}{RGB}{111,178,228}
\definecolor{darkblue}{RGB}{48,112,173}
\definecolor{green}{RGB}{70,155,118}
\definecolor{yellow}{RGB}{238,228,98}
\def\nset#1{\{1,\dots,#1\}}
\def\bi{\text{\boldmath$i$}}
\def\bj{\text{\boldmath$j$}}
\def\bk{\text{\boldmath$k$}}
\def\Stab{\operatorname{Stab}}
\def\Schur{\mathcal{S}chur}
\def\HSchur{H\mathcal{S}chur}
\def\Web{\mathcal{W}eb}
\def\HWeb{H\mathcal{W}eb}

\newcommand{\arxiv}[1]{{\tt arXiv:#1}}
\DeclareMathOperator{\Ind}{Ind}
\DeclareMathOperator{\Mat}{Mat}
\DeclareMathOperator{\HMat}{HMat}

\DeclareMathOperator{\End}{End}
\DeclareMathOperator{\Hom}{Hom}
\DeclareMathOperator{\im}{Im}
\DeclareMathOperator{\diag}{diag}
\DeclareMathOperator{\triv}{triv}

\makeatletter
\textheight 219mm
\textwidth 132mm
\raggedbottom
\pagestyle{fancy}
\fancyhf{}

\fancyhead[CE]{\fontsize{9}{11}\selectfont Hyperoctahedral Schur Category and Hyperoctahedral Web Category}
\fancyhead[CO]{\fontsize{9}{11}\selectfont Hyperoctahedral Schur Category and Hyperoctahedral Web Category}
\fancyhead[LE,RO]{\thepage}

\title{Hyperoctahedral Schur Category and Hyperoctahedral Web Category}
\author{Razzi Masroor}
\begin{document}
\maketitle
\begin{abstract}
We extend the Schur algebra and the polynomial web category of the symmetric group to the hyperoctahedral group. In particular, we define the hyperoctahedral web category diagrammatically by generators and relations, and prove that it is equivalent to the hyperoctahedral Schur category. 
\end{abstract}  
\section{Introduction}

Diagrammatic presentations are useful for simplifying and better understanding the multiplicative structure of algebraic objects. A classic example is the symmetric group, which admits a presentation via string diagrams. Strings go straight from a bottom row of vertices to a top row of vertices in order to encode a permutation. Below is an example in the symmetric group of degree $4$ when $\rho = (12)$,
\begin{center}
\begin{tikzpicture}[scale=1,thick,anchorbase]
\begin{scope}[xshift=4cm]
\draw  (2.8,-0.5) -- +(.6,1.5);
\draw  (3.4,-0.5) -- +(-.6,1.5);
\draw  (4.0,-0.5) -- +(0,1.5);
\draw  (4.6,-0.5) -- +(0,1.5);
\end{scope}.
\end{tikzpicture}.
\end{center}
Permutations are composed by putting diagrams on top of one another. In particular, while the order of $S_n$ is finite, there are an infinite number of diagrams (for $n\geq 2$) which are visually distinct. For instance, with
\begin{center}
    \begin{tikzpicture}[anchorbase, scale = 0.5]
\draw  (2.2,-0.5) -- +(1.8,1.5) ;
\draw  (2.8,1) -- +(1.2,1.5) ;
\draw  (2.8,-0.5) -- +(0.6,1.5);
\draw  (2.2,1) -- +(1.2,1.5) ;
\draw  (3.4,-0.5) -- +(-1.2,1.5);
\draw  (4,-0.5) -- +(-1.2,1.5);
\draw  (4,1) -- +(-1.2,1.5);
\draw  (3.4,1) -- +(-1.2,1.5);
    \end{tikzpicture}
\end{center}
the strings between vertices are more complex than a single straight edge, yet the strings in this diagram start and end in the same places as the previous diagram. There are three relations which, when observed, allow these two diagrams to be equal:
\begin{equation}\label{E:3relns}
 \begin{tikzpicture}[anchorbase,scale=0.5,thick]
\draw  (2.2,-0.5) -- +(0.6,1.5) ;
\draw  (2.8,1) -- +(-0.6,1.5) ;
\draw  (2.8,-0.5) -- +(-0.6,1.5);
\draw  (2.2,1) -- +(0.6,1.5) ;
\end{tikzpicture} = \begin{tikzpicture}[anchorbase,scale=1,thick]
\draw  (2.2,-0.5) -- +(0,1.5) ;
\draw  (2.8,-0.5) -- +(0,1.5) ;
\end{tikzpicture},
 \qquad \qquad
    \begin{tikzpicture}[anchorbase,scale=0.33,thick]
    \draw  (2.2,-0.5) -- +(0,1.5);
\draw  (2.8,-0.5) -- +(0.6,1.5);
\draw  (3.4,-0.5) -- +(-0.6,1.5);
 \draw  (2.2,1) -- +(0.6,1.5);
\draw  (2.8,1) -- +(-0.6,1.5);
\draw  (3.4,1) -- +(0,1.5);
 \draw  (2.2,-2) -- +(0.6,1.5);
\draw  (2.8,-2) -- +(-0.6,1.5);
\draw  (3.4,-2) -- +(0,1.5);
\end{tikzpicture}
= \begin{tikzpicture}[anchorbase,scale=0.33,thick]
    \draw  (2.2,-0.5) -- +(0.6,1.5);
\draw  (2.8,-0.5) -- +(-0.6,1.5);
\draw  (3.4,-0.5) -- +(0,1.5);
 \draw  (2.2,1) -- +(0,1.5);
\draw  (2.8,1) -- +(0.6,1.5);
\draw  (3.4,1) -- +(-0.6,1.5);
 \draw  (2.2,-2) -- +(0,1.5);
\draw  (2.8,-2) -- +(0.6,1.5);
\draw  (3.4,-2) -- +(-0.6,1.5);
\end{tikzpicture},
 \qquad \qquad
    \begin{tikzpicture}
    [anchorbase,scale=.5,thick]
\draw  (2.8,-0.5) -- +(0,1.5);
\draw  (3.4,-0.5) -- +(0.6,1.5);
\draw  (2.2,-0.5) -- +(0,1.5);
\draw  (4,-0.5) -- +(-0.6,1.5);
\draw  (2.8,1) -- +(-.6,1.5);
\draw  (3.4,1) -- +(0,1.5);
\draw  (2.2,1) -- +(0.6,1.5);
\draw  (4,1) -- +(0,1.5);
\end{tikzpicture} = \begin{tikzpicture}
    [anchorbase,scale=.5,thick]
\draw  (2.2,-0.5) -- +(0.6,1.5);
\draw  (2.8,-0.5) -- +(-0.6,1.5);
\draw  (3.4,-0.5) -- +(0,1.5);
\draw  (4,-0.5) -- +(0,1.5);
\draw  (2.2,1) -- +(0,1.5);
\draw  (2.8,1) -- +(0,1.5);
\draw  (3.4,1) -- +(0.6,1.5);
\draw  (4,1) -- +(-0.6,1.5);
\end{tikzpicture}.
\end{equation}
Importantly, these relations are sufficient to prove that any two diagrams representing the same permutation are equal, such as our two examples. One should compare the relations in Equation~\ref{E:3relns} with the Coxeter presentation:
\begin{equation}\label{Coxeter}S_{n}\cong \bigg\langle\bigg\langle s_1, s_2, \dots, s_{n-1} \bigg| \substack{s_i^2=1 \\ s_{i}s_{i+ 1}s_{i} = s_{i+1}s_is_{i+1} \\ s_is_j=s_js_i, |i-j|>1}\bigg\rangle \bigg\rangle.\end{equation}

In this example, the diagrammatic description of the symmetric group motivates the Coxeter presentation. Furthermore, one can in fact prove the generators and relations presentation of $S_n$ as in Equation~\ref{Coxeter} using string diagrams. The proof follows from establishing that, using only the relations in Equation~\ref{E:3relns}, an arbitrary diagram can be rewritten so that no two strings cross more than once, and that two reduced diagrams representing the same permutation are equal.

A classical result of Iwahori~\cite{Iwahori} is that the endomorphism algebra 
\[
\End_{GL_n(\mathbb{F}_q)}(\mathbb{C}[Flag_n(\mathbb{F}_q)])
\]
is isomorphic to the generators and relations algebra
\[
\big\langle\big\langle T_1, \dots, T_{n-1} \ | \ \substack{T_i^2= (q-1)T_i + q, \\ T_iT_{i\pm 1}T_i = T_{i\pm 1}T_iT_{i\pm 1}, \\ T_iT_j=T_jT_i, \quad \text{for $|i-j| > 1$}}\big\rangle \big\rangle.
\] 
A similar result was established by~\cite{Brundan} on the Schur algebra, another endomorphism algebra of a permutation module. Graphical techniques were applied to the Schur algebra by depicting its homomorphisms through pictures called web diagrams, see~\cite[Definition 4.7]{Brundan}. These diagrams are generated by merges, crossings, and splits involving strings of varying thickness. By creating a category of diagrams $\Web$, they were able to prove in~\cite[Theorem 4.10]{Brundan} that it is isomorphic to the category of homomorphisms from the Schur algebra, denoted as $\Schur$.

In this paper, we take a similar approach to~\cite{Brundan} by working with a variant of the Schur algbera, which we call the hyperoctahedral Schur algebra, based on the hyperoctahedral group. Formally, the \emph{hyperoctahedral group of degree $r$} is $H_r = \langle s_1, \ldots, s_r \rangle$ with relations
\begin{itemize}
  \item $s_i^2 = 1$;
  \item $s_is_j = s_js_i$ if $|i-j|>1$;
  \item $s_{i}s_js_{i} = s_{j}s_{i}s_{j}$ if $|i-j|=1$ and $i,j < r$;
  \item $s_{r-1}s_{r}s_{r-1}s_{r} = s_{r}s_{r-1}s_{r}s_{r-1}$.
\end{itemize}

As with~\cite{Brundan}, we will write our results on algebras in terms of two categories $\HSchur$ and $\HWeb$ for convenience. For $\HWeb$, it is defined through generating merges, splits, and crossings with relations. As for $\HSchur$, observe that given an index set $I$, a group $G$, and $M_i$ a vector space with a $G$ action for all $i\in I$, we have the following isomorphism for an endomorphism algebra:
\[\End_{G}(\bigoplus_{i\in I}M_{i})\cong \bigoplus_{i, j\in I} \Hom_{G}(M_i, M_j).\]
To turn this into a category, we take $I$ as the set of objects and $\Hom_{G}(M_i, M_j)$ as the morphisms from $i$ to $j$. This leads to our main theorem which proves a diagrammatic presentation of the hyperoctahedral Schur algebra:  
\bigskip
\\*
\noindent
{\bf Main Theorem.} {\em
    There is a an equivalence of categories $\Phi: \HWeb \rightarrow \HSchur$.
}

\subsection{Relation to previous work}

We are obliged to comment on past work which defines a hyperoctahedral Schur algebra. For instance, in~\cite{Green}, Green focuses on permutation modules (denoted as $V_\lambda$ in~\cite[Lemma 4.1.3]{Green}) induced from subgroups whose generators are some subset of those of $H_r$ (\cite[Definition 4.1.1]{Green}). However, Green excludes the generator $s_r$ from these subgroups, thereby creating a subalgebra of our hyperoctahedral Schur algebra.

Additionally, in~\cite[Example 1.4]{Du}, while they define a generalized notion of the Schur algebra for any Coxeter group (such as the hyperoctahedral group) and establish a basis, our work in proving a generators and relations presentation is novel.

Finally, we like to mention~\cite[Definition 3.1.1]{DKMZ} and the similarities between their generating morphisms and an alternative presentation of our hyperoctahedral web category. For more details, see Remark~\ref{DKMZPresentation}.

\subsection{Organization of the paper}
We will first introduce the necessary prerequisites, such as the symmetric and hyperoctahedral groups in Subsection~\ref{sec:symhyper} and the Schur algebra in Subsection~\ref{sec:schur}. We then recall web diagrams and their connection to the Schur algebra in Subsection~\ref{sec:web}. Afterward, we will define the hyperoctahedral Schur algebra and related terminology in Section~\ref{sec:defsforhyper} before working towards our main theorem in Section~\ref{sec:hweb+hschur}, which is that the hyperoctahedral web category is equivalent to the hyperoctahedral Schur category.
\begin{remark}
We will be using the ground field $\mathbb{F}$ throughout the rest of the paper.
\end{remark}
\subsection{Acknowledgements}
I would like to thank my PRIMES mentor Elijah Bodish for his guidance, assistance, and review throughout this research. Additionally, I want to thank Dr. Felix Gotti for his helpful comments during the editing of this paper. Furthermore, I am thankful to Daniel Tubbenhauer and Huanchen Bao for discussing the mathematics of this paper.

Finally, I want to say thank you to Prof. Pavel
Etingof, Dr. Slava Gerovitch, Dr. Tanya Khovanova, and all other organizers of the PRIMES program for allowing me to work on math research.
\section{Preliminary Definitions, Theorems, and Examples for the Schur Algebra}
\subsection{Symmetric and Hyperoctahedral Groups}\label{sec:symhyper}\begin{definition}
The \emph{symmetric group of degree $n$}, denoted by $S_n$, is the set of permutations of $\{1,2,\ldots, n\}$ with function composition as the group operation.
\end{definition}
This group can be generated by transpositions of adjacent elements; we define $\sigma_i$ for $i= 1$ to $n-1$ as the transposition which swaps $i$ and $i+1$ while preserving all other elements. These transpositions have relations
\begin{itemize}
  \item $\sigma_i^2 = 1$;
  \item $\sigma_i\sigma_j = \sigma_j\sigma_i$ for $|i-j|\geq 2$;
  \item $\sigma_{i-1}\sigma_i\sigma_{i-1} = \sigma_{i}\sigma_{i-1}\sigma_{i}$ for $2\leq i\leq n-1$.
\end{itemize}
In fact, the above relations suffice for a generators and relations presentation of $S_n$. When considering the symmetric group $S_n$, it is natural to restrict ourselves to permutations that fix sets of elements. This leads to the below definitions.
\begin{definition}For the $k$-tuple of positive integers
$\lambda = (\lambda_1,\ldots, \lambda_k)$, it is called a \emph{strict composition of $S_n$} if $\lambda_1 + \dots + \lambda_k = n$. We denote $\ell(\lambda) \coloneqq k$.
\end{definition}
\begin{remark}
The adjective \emph{strict} is commonly used to clarify that compositions consist of positive rather than non-negative integers. Throughout the rest of this paper, however, we will omit this adjective when describing strict compositions. 
\end{remark}
\begin{definition}
    For composition $\lambda$,  partition $\{1,2,\ldots,n\}$ into non-empty subsets with sizes based on our composition\footnote{Specifically the first $\lambda_1$ positive integers in the first set, the next $\lambda_2$ positive integers in the second set, and so on.}. We define the group of permutations which fix all subsets to be $S_{\lambda}$.
\end{definition}
\begin{remark}
We have $S_{\lambda} \cong S_{\lambda_1} \times \cdots \times S_{\lambda_{\ell(\lambda)}}$ as each $S_{\lambda_i}$ term in the Cartesian product characterizes how to map a set of $\lambda_i$ elements to itself.
\end{remark} 

We will now introduce the hyperoctahedral group and its analogous definitions.
\begin{definition}
The \emph{hyperoctahedral group of degree $n$} is $H_n = \left<\sigma_1, \ldots, \sigma_n \right >$ with relations
\begin{itemize}
  \item $\sigma_i^2 = 1$;
  \item $\sigma_i\sigma_j = \sigma_j\sigma_i$ for $|i-j|\geq 2$;
  \item $\sigma_{i-1}\sigma_i\sigma_{i-1} = \sigma_{i}\sigma_{i-1}\sigma_{i}$ for $2\leq i\leq n-1$;
  \item $\sigma_{n-1}\sigma_{n}\sigma_{n-1}\sigma_{n} = \sigma_{n}\sigma_{n-1}\sigma_{n}\sigma_{n-1}$.
\end{itemize}

\end{definition}

\begin{remark}
   Note that the first three relations are identical to those for $S_n$, so the subgroup generated by $\sigma_1, \ldots, \sigma_{n-1}$ is precisely $S_n$. Additionally, for $S_{2n}$ with generators $\sigma'_1, \ldots, \sigma'_{2n-1}$, one can verify that the mapping $\sigma_i\mapsto \sigma'_i\sigma'_{2n-i}$ for $i<n$ and $\sigma_n\mapsto\sigma'_n$ proves that $H_n$ is a subgroup of $S_{2n}$.
\end{remark}
\begin{definition}
    A \emph{string diagram} (or a \emph{crossing diagram}) for $\rho \in S_n$ consists of two rows of $n$ vertices where the $i$-th vertex in the bottom row has an edge connection to the $\rho(i)$-th vertex in the top row. See below for an example with $n=5$ and $\rho = (123)(45)$:
    \begin{center}
\begin{tikzpicture}[scale=1,thick,anchorbase]
\begin{scope}[xshift=4cm]
\draw  (2.8,-0.5) -- +(.6,1.5);
\draw  (3.4,-0.5) -- +(0.6,1.5);
\draw  (4,-0.5) -- +(-1.2,1.5);
\draw  (5.2,-0.5) -- +(-.6,1.5);
\draw  (4.6,-0.5) -- +(.6,1.5);
\end{scope}
\end{tikzpicture}.
    \end{center}
\end{definition}
In regards to $H_n$ being a subgroup of $S_{2n}$, we can now describe the crossing diagrams for the generators of $H_n$. Specifically, $H_n$ is the subgroup of $S_{2n}$ consisting of all diagrams with a vertical line of symmetry. Each of the generators of $H_n$ account for this symmetry as seen below with the case of $n=3$:
\begin{center}
$\sigma_1 = 
    \begin{tikzpicture}[anchorbase,scale=1,thick]
\draw  (2.2,-0.5) -- +(0.6,1.5);
\draw  (2.8,-0.5) -- +(-0.6,1.5);
\draw  (3.4,-0.5) -- +(0,1.5);
\draw  (4.0,-0.5) -- +(0,1.5);
\draw  (4.6,-0.5) -- +(0.6,1.5);
\draw  (5.2,-0.5) -- +(-0.6,1.5);
\end{tikzpicture} 
,$
\bigskip
\\*
$
\sigma_2 = 
    \begin{tikzpicture}[anchorbase,scale=1,thick]
\draw  (2.8,-0.5) -- +(0.6,1.5);
\draw  (3.4,-0.5) -- +(-0.6,1.5);
\draw  (2.2,-0.5) -- +(0,1.5);
\draw  (5.2,-0.5) -- +(0,1.5);
\draw  (4,-0.5) -- +(0.6,1.5);
\draw  (4.6,-0.5) -- +(-0.6,1.5);
\end{tikzpicture}
,$
\bigskip
\\*
$
\sigma_3 = 
    \begin{tikzpicture}
    [anchorbase,scale=1,thick]
\draw  (2.8,-0.5) -- +(0,1.5);
\draw  (3.4,-0.5) -- +(0.6,1.5);
\draw  (2.2,-0.5) -- +(0,1.5);
\draw  (5.2,-0.5) -- +(0,1.5);
\draw  (4,-0.5) -- +(-0.6,1.5);
\draw  (4.6,-0.5) -- +(0,1.5);
\end{tikzpicture}
.$
\end{center}
\begin{remark}\label{DKMZPresentation}
    There is an alternative presentation of $H_n$ motivated by the fact that $S_n$ is a subgroup. Given an element of $H_n$ and its diagram in $S_{2n}$, we can erase all strings starting from the right-half of the bottom row. To correct any remaining strings that cross the vertical line of symmetry, we reflect their top vertices across this line and mark these strings with a dot. For instance, the generators of $H_3$ are written as
    \begin{center}
$\sigma_1 = 
    \begin{tikzpicture}[anchorbase,scale=1,thick]
\draw  (2.2,-0.5) -- +(0.6,1.5);
\draw  (2.8,-0.5) -- +(-0.6,1.5);
\draw  (3.4,-0.5) -- +(0,1.5);
\draw  (4.0,-0.5) -- +(0,1.5);
\draw  (4.6,-0.5) -- +(0.6,1.5);
\draw  (5.2,-0.5) -- +(-0.6,1.5);
\end{tikzpicture} \quad \quad
\rightarrow \quad \quad
\begin{tikzpicture}[anchorbase,scale=1,thick]
\draw  (2.2,-0.5) -- +(0.6,1.5);
\draw  (2.8,-0.5) -- +(-0.6,1.5);
\draw  (3.4,-0.5) -- +(0,1.5);
\end{tikzpicture} ,$
\bigskip
\\*
$\sigma_2 = 
    \begin{tikzpicture}[anchorbase,scale=1,thick]
\draw  (2.2,-0.5) -- +(0,1.5);
\draw  (2.8,-0.5) -- +(0.6,1.5);
\draw  (3.4,-0.5) -- +(-0.6,1.5);
\draw  (4.0,-0.5) -- +(0.6,1.5);
\draw  (4.6,-0.5) -- +(-0.6,1.5);
\draw  (5.2,-0.5) -- +(0,1.5);
\end{tikzpicture} \quad \quad
\rightarrow \quad \quad
\begin{tikzpicture}[anchorbase,scale=1,thick]
\draw  (2.2,-0.5) -- +(0,1.5);
\draw  (2.8,-0.5) -- +(0.6,1.5);
\draw  (3.4,-0.5) -- +(-0.6,1.5);
\end{tikzpicture} ,$
\bigskip
\\*
$\sigma_3 = 
    \begin{tikzpicture}[anchorbase,scale=1,thick]
\draw  (2.2,-0.5) -- +(0,1.5);
\draw  (2.8,-0.5) -- +(0,1.5);
\draw  (3.4,-0.5) -- +(0.6,1.5);
\draw  (4.0,-0.5) -- +(-0.6,1.5);
\draw  (4.6,-0.5) -- +(0,1.5);
\draw  (5.2,-0.5) -- +(0,1.5);
\end{tikzpicture} \quad \quad
\rightarrow \quad \quad
\begin{tikzpicture}[anchorbase,scale=1,thick]
\draw  (2.2,-0.5) -- +(0,1.5);
\draw  (2.8,-0.5) -- +(0,1.5);
\draw  (3.4,-0.5) -- +(0,1.5);
\node at (3.4, 0.25) {\textbullet};
\end{tikzpicture}.$
\end{center}
In addition to the three relations depicted in Equation~\ref{E:3relns} for $S_n$, there are two other relations regarding our dots: they can freely move on a string and two of them on the same string can annihilate each other. While we do not use this dot notation for the rest of this paper, we like to mention that it is visually similar to the generating coupon morphism displayed in~\cite[Definition 3.1.1]{DKMZ}.
\end{remark}

Based on the crossing diagram presentation in $S_{2n}$ for elements of $H_n$, one might define compositions of $H_n$ as tuples of positive integers $\lambda = (\lambda_1,\ldots, \lambda_k)$ with~$\lambda_1+\dots+\lambda_k=2n$ and for all $j$, $\lambda_j = \lambda_{k+1-j}$; this latter condition stemming from the vertical line of symmetry. However, we will use the following altered definition. 
\begin{definition}
For a $k$-tuple of positive integers
$\gamma = (\gamma_1,\ldots, \gamma_k)$ with $\gamma_1 + \dots + \gamma_k = 2n$ and for all $j$, $\gamma_j = \gamma_{k+1-j}$, we say that $\lambda = (\gamma_1,\ldots, \gamma_k)$ is a \emph{hypercomposition of $H_n$} if $k$ is odd, otherwise $\lambda = (\gamma_1,\ldots,\gamma_{\frac{k}{2}}, 0,\gamma_{\frac{k}{2}+1},\ldots,\gamma_k)$ is a hypercomposition of $H_n$. 
\end{definition}
\begin{remark}
As with compositions, for $\lambda = (\lambda_1, \ldots, \lambda_k)$ a hypercomposition we have $\ell(\lambda) \coloneqq k$. The insertion of a $0$-element into a $k$-tuple for $k$ even ensures that all hypercompositions have odd length. 
\end{remark}
For example, the symmetric tuples for $H_2$ are $\gamma = (4), (2,2), (1,2,1),$ and $(1,1,1,1)$, but these correspond to hypercompositions $\lambda = (4), (2,0,2), (1,2,1),$ and $(1,1,0,1,1)$. As with the symmetric group, if we have a hypercomposition $\lambda$ of $H_n$ we can analogously partition $\{1,2,\ldots,2n\}$ into subsets of size $\lambda_1, \ldots, \lambda_{\ell(\lambda)}$. This allows us to define $H_\lambda$ as follows.
\begin{definition}
    For $\lambda$ a hypercomposition of $H_n$, $H_{\lambda}$ is the subgroup of $H_n$ that fixes all subsets.
\end{definition}
\begin{remark}
We have \[H_{\lambda} \cong S_{\lambda_1} \times \cdots \times S_{\lambda_{\frac{\ell(\lambda)-1}{2}}}\times H_{\frac{\lambda_{\frac{\ell(\lambda)+1}{2}}}{2}}\] as each $S_{\lambda_i}$ term in the Cartesian product characterizes how to map a set of $\lambda_i$ elements to itself, with these maps being reflected across the vertical line of symmetry, while $H_{\frac{\lambda_{\frac{\ell(\lambda)+1}{2}}}{2}}$ characterizes how to map a set of $\lambda_{\frac{\ell(\lambda)+1}{2}}$ elements to itself, a map that is already symmetric on the vertical line.
\end{remark} 
\begin{remark}
    Observe that $\lambda_{\frac{\ell(\lambda)+1}{2}}=0$ if and only if $\sigma_n\notin  H_\lambda$. Therefore, Green's restriction in~\cite[Definition 4.1.1]{Green} to consider subgroups without $\sigma_n$  is equivalent to only using hypercompositions with a $0$-element. 
\end{remark}
For instance, with the generators of $H_3$ we have $H_{(1,1,2,1,1)} = \langle \sigma_3 \rangle$ and $H_{(2,2,2)} =  \langle \sigma_1, \sigma_3 \rangle$. We can also interpret $S_n/S_\lambda$ and $H_n/H_{\lambda}$ for various compositions and hypercompositions. By applying the partition of $\lambda$ onto the bottom $n$ or $2n$ vertices, two diagrams are in the same coset of $S_n/S_\lambda$ or $H_n/H_{\lambda}$ if and only if you can permute the vertices in the bottom row within their respective subsets to go from one diagram to another. To choose a representative of a coset, we take the one where edges coming from the same subset do not cross. This is equivalent to choosing the diagram with the least number of crossings.

In the case of $S_4/S_{2,2}$ we have the following six representatives:
\begin{center}
\begin{tikzpicture}[scale=1,thick,anchorbase]
\begin{scope}[xshift=4cm]
\draw  (2.8,-0.5) -- +(0,1.5);
\draw  (3.4,-0.5) -- +(0,1.5);
\draw  (4,-0.5) -- +(0,1.5);
\draw  (4.6,-0.5) -- +(0,1.5);
\draw[ thick, dashed] (3.1,-0.5) ellipse (.5 and 0.2);
\draw[ thick, dashed] (4.3,-0.5) ellipse (.5 and 0.2);
\end{scope}
\end{tikzpicture},\quad \quad \begin{tikzpicture}[scale=1,thick,anchorbase]
\begin{scope}[xshift=4cm]
\draw  (2.8,-0.5) -- +(0,1.5);
\draw  (3.4,-0.5) -- +(0.6,1.5);
\draw  (4,-0.5) -- +(-.6,1.5);
\draw  (4.6,-0.5) -- +(0,1.5);
\draw[ thick, dashed] (3.1,-0.5) ellipse (.5 and 0.2);
\draw[ thick, dashed] (4.3,-0.5) ellipse (.5 and 0.2);
\end{scope}
\end{tikzpicture},\quad \quad\begin{tikzpicture}[scale=1,thick,anchorbase]
\begin{scope}[xshift=4cm]
\draw  (2.8,-0.5) -- +(0,1.5);
\draw  (3.4,-0.5) -- +(1.2,1.5);
\draw  (4,-0.5) -- +(-.6,1.5);
\draw  (4.6,-0.5) -- +(-.6,1.5);
\draw[ thick, dashed] (3.1,-0.5) ellipse (.5 and 0.2);
\draw[ thick, dashed] (4.3,-0.5) ellipse (.5 and 0.2);
\end{scope}
\end{tikzpicture},
\end{center}
\begin{center} 
\!
\begin{tikzpicture}[scale=1,thick,anchorbase]
\begin{scope}[xshift=4cm]
\draw  (2.8,-0.5) -- +(0.6,1.5);
\draw  (3.4,-0.5) -- +(0.6,1.5);
\draw  (4,-0.5) -- +(-1.2,1.5);
\draw  (4.6,-0.5) -- +(0,1.5);
\draw[ thick, dashed] (3.1,-0.5) ellipse (.5 and 0.2);
\draw[ thick, dashed] (4.3,-0.5) ellipse (.5 and 0.2);
\end{scope}
\end{tikzpicture},\quad \quad \begin{tikzpicture}[scale=1,thick,anchorbase]
\begin{scope}[xshift=4cm]
\draw  (2.8,-0.5) -- +(0.6,1.5);
\draw  (3.4,-0.5) -- +(1.2,1.5);
\draw  (4,-0.5) -- +(-1.2,1.5);
\draw  (4.6,-0.5) -- +(-.6,1.5);
\draw[ thick, dashed] (3.1,-0.5) ellipse (.5 and 0.2);
\draw[ thick, dashed] (4.3,-0.5) ellipse (.5 and 0.2);
\end{scope}
\end{tikzpicture},\quad \quad\begin{tikzpicture}[scale=1,thick,anchorbase]
\begin{scope}[xshift=4cm]
\draw  (2.8,-0.5) -- +(1.2,1.5);
\draw  (3.4,-0.5) -- +(1.2,1.5);
\draw  (4,-0.5) -- +(-1.2,1.5);
\draw  (4.6,-0.5) -- +(-1.2,1.5);
\draw[ thick, dashed] (3.1,-0.5) ellipse (.5 and 0.2);
\draw[ thick, dashed] (4.3,-0.5) ellipse (.5 and 0.2);
\end{scope}
\end{tikzpicture}.
\end{center}
Additionally, $H_2/H_{1,2,1}$ has the following four representatives:
\begin{center}
\begin{tikzpicture}[scale=1,thick,anchorbase]
\begin{scope}[xshift=4cm]
\draw  (2.8,-0.5) -- +(0,1.5);
\draw  (3.4,-0.5) -- +(0,1.5);
\draw  (4,-0.5) -- +(0,1.5);
\draw  (4.6,-0.5) -- +(0,1.5);
\draw[ thick, dashed] (2.8,-0.5) ellipse (.2 and 0.2);
\draw[ thick, dashed] (3.7,-0.5) ellipse (.5 and 0.2);
\draw[ thick, dashed] (4.6,-0.5) ellipse (.2 and 0.2);
\end{scope}
\end{tikzpicture},\quad \quad \begin{tikzpicture}[scale=1,thick,anchorbase]
\begin{scope}[xshift=4cm]
\draw  (2.8,-0.5) -- +(1.8,1.5);
\draw  (3.4,-0.5) -- +(0,1.5);
\draw  (4,-0.5) -- +(0,1.5);
\draw  (4.6,-0.5) -- +(-1.8,1.5);
\draw[ thick, dashed] (2.8,-0.5) ellipse (.2 and 0.2);
\draw[ thick, dashed] (3.7,-0.5) ellipse (.5 and 0.2);
\draw[ thick, dashed] (4.6,-0.5) ellipse (.2 and 0.2);
\end{scope}
\end{tikzpicture},\quad \quad 
\begin{tikzpicture}[scale=1,thick,anchorbase]
\begin{scope}[xshift=4cm]
\draw  (2.8,-0.5) -- +(0.6,1.5);
\draw  (3.4,-0.5) -- +(-0.6,1.5);
\draw  (4,-0.5) -- +(+0.6,1.5);
\draw  (4.6,-0.5) -- +(-0.6,1.5);
\draw[ thick, dashed] (2.8,-0.5) ellipse (.2 and 0.2);
\draw[ thick, dashed] (3.7,-0.5) ellipse (.5 and 0.2);
\draw[ thick, dashed] (4.6,-0.5) ellipse (.2 and 0.2);
\end{scope}
\end{tikzpicture},\quad \quad \begin{tikzpicture}[scale=1,thick,anchorbase]
\begin{scope}[xshift=4cm]
\draw  (2.8,-0.5) -- +(1.2,1.5);
\draw  (3.4,-0.5) -- +(-0.6,1.5);
\draw  (4,-0.5) -- +(+0.6,1.5);
\draw  (4.6,-0.5) -- +(-1.2,1.5);
\draw[ thick, dashed] (2.8,-0.5) ellipse (.2 and 0.2);
\draw[ thick, dashed] (3.7,-0.5) ellipse (.5 and 0.2);
\draw[ thick, dashed] (4.6,-0.5) ellipse (.2 and 0.2);
\end{scope}
\end{tikzpicture}.
\end{center}
 By labelling edges from $1$ to $\ell(\lambda)$ based on the subset their bottom vertices are in, and then reading the labels by the top vertices from left to right, we can represent $H_n/ H_\lambda$ with $2n$-tuples as follows. 
\begin{definition}
For $\lambda = (\lambda_1,\ldots, \lambda_k)$ a hypercomposition of $H_n$, 
\[
I_\lambda \coloneqq \{(i_1, i_2, \ldots, i_{2n}) \ | \  \{\#r| i_r = j\}\ = \lambda_j \ \text{and} \  i_d +i_{2n+1-d} = \ell(\lambda)+1\}.
\]
\end{definition}
\begin{remark}
    We have the anti-symmetry condition $i_d +i_{2n+1-d} = \ell(\lambda)+1$ due to the vertical symmetry of crossing diagrams of $H_n$. Additionally, it is straightforward to show that our previously mentioned labelling process is a bijection between sets $H_n/H_{\lambda}$ and $I_\lambda$.
    \end{remark}
To formalize such a bijection we introduce the following.
\begin{definition}
    For a hypercomposition $\lambda$ of $H_{n}$, we define its labelling function as $L_\lambda: \{1, \ldots, 2n\} \rightarrow \{1, \ldots, \ell(\lambda)\}$ such that the first $\lambda_1$ positive integers are sent to $1$, the next $\lambda_2$ positive integers are sent to $2$, and so on.
\end{definition}
\begin{definition}
   The bijective mapping $\phi_{\lambda}: H_n/H_\lambda \rightarrow I_\lambda$ is defined as $\phi_{\lambda}(gH_\lambda) = (L_\lambda(g^{-1}(1)), \ldots,L_\lambda(g^{-1}(2n)) )$ for $g\in H_n$.
\end{definition}
\begin{remark}
    Note that this is well-defined due to how $H_\lambda$ is the subgroup of permutations of $H_n$ that do not affect the labelling scheme .
\end{remark}
As an example of going between crossing diagrams and $2n$-tuples, see the below pictures for $\lambda = (1,2,1)$:
\begin{center}
\begin{tikzpicture}[scale=1,thick,anchorbase ]
\begin{scope}[xshift=4cm]
\draw  (2.8,-0.5) -- +(0,1.5);
\draw  (3.4,-0.5) -- +(0,1.5);
\draw  (4,-0.5) -- +(0,1.5);
\draw  (4.6,-0.5) -- +(0,1.5);
\draw[ thick, dashed] (2.8,-0.5) ellipse (.2 and 0.2);
\draw[ thick, dashed] (3.7,-0.5) ellipse (.5 and 0.2);
\draw[ thick, dashed] (4.6,-0.5) ellipse (.2 and 0.2);
\end{scope}
\end{tikzpicture} $\mapsto (1,2,2,3)$,\quad \quad
\begin{tikzpicture}[scale=1,thick,anchorbase]
\begin{scope}[xshift=4cm]
\draw  (2.8,-0.5) -- +(0.6,1.5);
\draw  (3.4,-0.5) -- +(-0.6,1.5);
\draw  (4,-0.5) -- +(+0.6,1.5);
\draw  (4.6,-0.5) -- +(-0.6,1.5);
\draw[ thick, dashed] (2.8,-0.5) ellipse (.2 and 0.2);
\draw[ thick, dashed] (3.7,-0.5) ellipse (.5 and 0.2);
\draw[ thick, dashed] (4.6,-0.5) ellipse (.2 and 0.2);
\end{scope}
\end{tikzpicture} $ \mapsto  (2,1,3,2)$,
\end{center}
\begin{center} 
\begin{tikzpicture}[scale=1,thick,anchorbase]
\begin{scope}[xshift=4cm]
\draw  (2.8,-0.5) -- +(1.2,1.5);
\draw  (3.4,-0.5) -- +(-0.6,1.5);
\draw  (4,-0.5) -- +(+0.6,1.5);
\draw  (4.6,-0.5) -- +(-1.2,1.5);
\draw[ thick, dashed] (2.8,-0.5) ellipse (.2 and 0.2);
\draw[ thick, dashed] (3.7,-0.5) ellipse (.5 and 0.2);
\draw[ thick, dashed] (4.6,-0.5) ellipse (.2 and 0.2);
\end{scope}
\end{tikzpicture}$\mapsto(2,3,1,2)$, \quad \quad
\begin{tikzpicture}[scale=1,thick,anchorbase] \begin{scope}[xshift=4cm] \draw  (2.8,-0.5) -- +(1.8,1.5); \draw  (3.4,-0.5) -- +(0,1.5); \draw  (4,-0.5) -- +(0,1.5); \draw  (4.6,-0.5) -- +(-1.8,1.5);\draw[ thick, dashed] (2.8,-0.5) ellipse (.2 and 0.2);
\draw[ thick, dashed] (3.7,-0.5) ellipse (.5 and 0.2);
\draw[ thick, dashed] (4.6,-0.5) ellipse (.2 and 0.2);
\end{scope} \end{tikzpicture}$\mapsto(3,2,2,1)$ .
\end{center}\label{H2Reps}
The main use for this tuple representation is because of the following definition and lemma.
\begin{definition}
    We define the action of $H_n$ on $I_\lambda$ by viewing $h\in H_n$ as a permutation in $S_{2n}$ and letting  $h\cdot (\bi_1, \bi_2, \ldots, \bi_{2n} ) = (\bi_{h^{-1}(1)}, \ldots, \bi_{h^{-1}(2n)})$.
\end{definition}
\begin{remark}
    In~\cite[Section 4]{Brundan} they define their analogous action for $S_n$ as a right action while the above definition is a left action. 
\end{remark}
\begin{lemma}
    The action of $H_n$ on $H_n/H_\lambda$ is isomorphic to its action on $I_\lambda$.
\end{lemma}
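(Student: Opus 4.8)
The plan is to show that the bijection $\phi_\lambda$ from the preceding definition is itself an isomorphism of $H_n$-sets, where $H_n/H_\lambda$ carries the usual action by left multiplication and $I_\lambda$ carries the action defined above. Since $\phi_\lambda$ is already known to be a bijection, the only thing left to verify is that it intertwines the two actions, i.e.\ that $\phi_\lambda(h\cdot gH_\lambda) = h\cdot \phi_\lambda(gH_\lambda)$ for all $g,h\in H_n$.

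First I would compute the left-hand side directly from the definitions. Since $h\cdot gH_\lambda = (hg)H_\lambda$ and $(hg)^{-1} = g^{-1}h^{-1}$, the $d$-th coordinate of $\phi_\lambda((hg)H_\lambda)$ is $L_\lambda(g^{-1}(h^{-1}(d)))$. Next I would compute the right-hand side: writing $\mathbf{i} = \phi_\lambda(gH_\lambda)$ so that $\mathbf{i}_k = L_\lambda(g^{-1}(k))$, the action on $I_\lambda$ gives $(h\cdot \mathbf{i})_d = \mathbf{i}_{h^{-1}(d)} = L_\lambda(g^{-1}(h^{-1}(d)))$. The two coordinates agree for every $d$, so equivariance follows immediately, and the whole computation reduces to the bookkeeping identity $(hg)^{-1}=g^{-1}h^{-1}$ together with matching the two definitions coordinate by coordinate.

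The step I expect to require the most care is checking that the action on $I_\lambda$ is genuinely well-defined, namely that $h\cdot \mathbf{i}$ again lies in $I_\lambda$. The multiplicity condition $\#\{r\mid i_r = j\} = \lambda_j$ is automatic, because permuting coordinates does not change the underlying multiset of values. The anti-symmetry condition $i_d + i_{2n+1-d} = \ell(\lambda)+1$ is the delicate part: it holds precisely because every $h\in H_n$, regarded as an element of $S_{2n}$, has a crossing diagram with a vertical line of symmetry, which translates into the identity $h(2n+1-k) = 2n+1-h(k)$, and hence $h^{-1}(2n+1-d) = 2n+1-h^{-1}(d)$. Applying the anti-symmetry of $\mathbf{i}$ at the index $e = h^{-1}(d)$ then yields $(h\cdot \mathbf{i})_d + (h\cdot \mathbf{i})_{2n+1-d} = \mathbf{i}_e + \mathbf{i}_{2n+1-e} = \ell(\lambda)+1$. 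Once this symmetry property of $H_n\subseteq S_{2n}$ is in hand, the remainder is a routine unwinding of definitions, and the equivariant bijection $\phi_\lambda$ delivers the claimed isomorphism of $H_n$-actions.
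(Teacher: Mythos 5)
Your proof is correct and follows essentially the same route as the paper: both verify equivariance of $\phi_\lambda$ coordinatewise via the identity $(hg)^{-1}=g^{-1}h^{-1}$. Your additional check that $h\cdot\mathbf{i}$ satisfies the anti-symmetry condition (so the action on $I_\lambda$ is well-defined) is a worthwhile detail the paper leaves implicit, but it does not change the argument.
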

\begin{proof}
    It suffices to show that $\phi_{\lambda}(h\cdot gH_\lambda) = h\cdot \phi_{\lambda}(gH_\lambda)$ for $h, g\in H_n$. We have
    \[\phi_{\lambda}(h\cdot gH_\lambda) = \phi_{\lambda}((hg)H_\lambda) = (L_\lambda((hg)^{-1}(1)), \ldots,L_\lambda((hg)^{-1}(2n)) ) \]
    \[ =  (L_\lambda(g^{-1}(h^{-1}(1))), \ldots,L_\lambda(g^{-1}(h^{-1}(n)))) \]\[= h\cdot (L_\lambda(g^{-1}(1)), \ldots,L_\lambda(g^{-1}(2n)) )= h\cdot \phi_{\lambda}(gH_\lambda).\]
\end{proof}
\subsection{Schur Algebra}\label{sec:schur}

Before defining the Schur algebra, we will briefly mention some ideas from representation theory.

Recall that a representation of a group is a vector space $V$ equipped with a group homomorphism from the group to $GL(V)$. Consider a subgroup $H$ of $G$. If we have a representation $W$ of $H$, then there is a natural way to construct a representation of $G$. This is formally called an induced representation~\cite[Definition 5.8.1]{EGHLSVY}.

\begin{definition}
If $G$ is a group, $H\subset G$, and $W$ is a representation of $H$, then the \emph{induced representation} 
\[
\Ind_H^{G}(W)=\{f: G\rightarrow W|f(hx) = h\cdot f(x), \forall x\in G,h\in H\}
\]
is the representation of $G$ with action $g(f)(x) = f(xg)$, for all $g \in G$. 
\end{definition}

Any group $G$ will have at least one representation, the trivial representation denoted by $\triv_G$. This is defined by taking the one-dimensional vector space $V = \mathbb{F}$ and using the group homomorphism $\phi: G\rightarrow GL(V)$ satisfying $\phi(g)(v) = v$. Combining this with induced representations, we can create $\Ind_H^{G}(\triv_H)$ which has the following isomorphism as proven in~\cite{EGHLSVY}.
\begin{proposition}
As vector spaces, $\Ind_H^{G}(\triv_H)\cong \mathbb{F}[G/ H]$.
\end{proposition}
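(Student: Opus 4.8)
The plan is to unwind the definition of the induced module and then exhibit an explicit basis-to-basis bijection. First I would observe that, because the action of $H$ on $\triv_H = \mathbb{F}$ is trivial, the defining condition $f(hx) = h\cdot f(x)$ collapses to $f(hx) = f(x)$ for all $h \in H$, $x \in G$. In other words, $\Ind_H^{G}(\triv_H)$ consists exactly of those functions $f : G \to \mathbb{F}$ that are constant on each right coset $Hx$. Since $G$ is finite, such functions form a vector space with basis $\{\mathbf{1}_C : C \in H\backslash G\}$, where $\mathbf{1}_C$ denotes the indicator function of the coset $C$: these are linearly independent as they have pairwise disjoint supports, and they span because any function $f$ constant on right cosets equals $\sum_{C} f(C)\,\mathbf{1}_C$. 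In particular $\dim \Ind_H^{G}(\triv_H) = [G:H]$.

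Next, since $\mathbb{F}[G/H]$ is by definition the free $\mathbb{F}$-vector space on the set of left cosets $G/H$, it also has dimension $[G:H]$, so the two spaces are abstractly isomorphic. To make the identification canonical I would define $\Phi : \mathbb{F}[G/H] \to \Ind_H^{G}(\triv_H)$ on basis elements by $gH \mapsto \mathbf{1}_{Hg^{-1}}$ and extend linearly. The one point requiring care is well-definedness: if $gH = \tilde{g}H$ then $\tilde{g} = gh$ for some $h \in H$, whence $H\tilde{g}^{-1} = Hh^{-1}g^{-1} = Hg^{-1}$, so the target indicator is independent of the chosen representative.

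Finally I would verify that $\Phi$ carries a basis to a basis. The assignment $gH \mapsto Hg^{-1}$ is a bijection from $G/H$ onto $H\backslash G$, with inverse $Hg \mapsto g^{-1}H$ (both well-defined by the computation just made), so $\Phi$ sends the basis $\{gH\}$ of $\mathbb{F}[G/H]$ bijectively onto the basis $\{\mathbf{1}_C\}$ of $\Ind_H^{G}(\triv_H)$. A linear map taking a basis bijectively to a basis is an isomorphism, which completes the argument.

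The main, and essentially only, obstacle is bookkeeping rather than mathematics: the relation $f(hx)=f(x)$ makes elements of the induced module constant on the \emph{right} cosets $Hx$, whereas $\mathbb{F}[G/H]$ is built from \emph{left} cosets $gH$, so one must insert the inversion $g \mapsto g^{-1}$ to match them up correctly. No deeper input is needed; I would additionally remark that this same $\Phi$ is in fact $G$-equivariant, intertwining the action $(g\cdot f)(x) = f(xg)$ on the source with left translation on $\mathbb{F}[G/H]$, since for all $x\in G$ one has $(g\cdot \mathbf{1}_{Hg'^{-1}})(x) = \mathbf{1}_{Hg'^{-1}}(xg)$, which is nonzero precisely when $x \in H(gg')^{-1}$. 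This equivariance is presumably the reason the identification is useful in the sequel.
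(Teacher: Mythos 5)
Your argument is correct and complete; note that the paper itself does not prove this proposition but simply cites \cite{EGHLSVY}, so there is no in-paper proof to compare against. Your identification of $\Ind_H^G(\triv_H)$ with functions constant on right cosets, the dimension count, and the explicit basis bijection $gH\mapsto \mathbf{1}_{Hg^{-1}}$ (with the inversion correctly handling the left/right coset mismatch) is exactly the standard argument, and your closing observation that the map is in fact $G$-equivariant is a worthwhile strengthening, since the sequel uses $M_\lambda=\mathbb{F}[G/H]$ as a $G$-module and not merely as a vector space.
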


\begin{definition}
   The set of homomorphisms which commute with group $G$ is defined by $\Hom_G(U,V) \coloneqq \{f \in \Hom(U,V) | gf(u) = f(gu), \forall g\in G, u\in U\}$ where the two multiplications are group actions of $G$ on $U,V$.
\end{definition}

\begin{definition}
For a group $G$, set $X$, and group action $*: G\times X\rightarrow X$, the  \emph{orbit} of an element $x\in X$ is defined as $G(x) = \{g*x\in X | g\in G\}$.
\end{definition}
In general, understanding homomorphisms between arbitrary representations is difficult, but when both the representations are induced from a subgroup, questions about homomorphisms are often translated into problems regarding orbits. In particular, we have the following.

\begin{proposition}\label{basiscross}
For finite sets $X = \{x_1,\ldots,x_m\}$, $Y= \{y_1,\ldots,y_n\}$, the set of orbits of group $G$ on $X\times Y$ creates a basis of $\Hom_{G}(\mathbb{F}[X], \mathbb{F}[Y])$ where an orbit $S$ corresponds to the homomorphism with matrix representation defined by $a_{i,j} = 1$ if $(x_j, y_i)\in S$, $a_{i,j} = 0$ otherwise.
\end{proposition}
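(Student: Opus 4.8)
The plan is to set up the standard dictionary between $G$-equivariant linear maps and matrices that are constant on orbits, and then observe that the orbit-indicator matrices furnish exactly the claimed basis.

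First I would fix the bijection between linear maps and matrices. Since $X = \{x_1, \ldots, x_m\}$ is a basis of $\mathbb{F}[X]$, any linear map $f \colon \mathbb{F}[X] \to \mathbb{F}[Y]$ is determined by its values $f(x_j) = \sum_{i=1}^{n} a_{i,j}\, y_i$, and the assignment $f \mapsto A = (a_{i,j})$ is a vector space isomorphism from $\Hom(\mathbb{F}[X], \mathbb{F}[Y])$ onto $\Mat_{n \times m}(\mathbb{F})$. The task then reduces to identifying which matrices $A$ correspond to maps lying in the subspace $\Hom_G(\mathbb{F}[X], \mathbb{F}[Y])$.

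Next I would translate the $G$-equivariance condition into a condition on the matrix entries. Writing the $G$-actions on basis vectors as $g \cdot x_j = x_{g\cdot j}$ and $g \cdot y_i = y_{g \cdot i}$ (abusing notation for the induced permutations of the index sets), I would expand both sides of $f(g \cdot x_j) = g \cdot f(x_j)$ and compare coefficients of each $y_i$. The left side is $\sum_i a_{i,\, g\cdot j}\, y_i$, while the right side, after reindexing, is $\sum_i a_{g^{-1}\cdot i,\, j}\, y_i$; hence $f$ is $G$-equivariant if and only if $a_{g\cdot i,\, g\cdot j} = a_{i,j}$ for all $g, i, j$. This says precisely that the function $(x_j, y_i) \mapsto a_{i,j}$ is constant on the orbits of the diagonal $G$-action on $X \times Y$.

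Finally, I would conclude by a basis count. The space of matrices that are constant on each orbit admits, as an evident basis, the indicator matrices $A^S$ indexed by the orbits $S$, where $(A^S)_{i,j} = 1$ if $(x_j, y_i) \in S$ and $(A^S)_{i,j} = 0$ otherwise: these are linearly independent because distinct orbits are disjoint, and any orbit-constant matrix is recovered as their linear combination using the common orbit values as coefficients. Transporting back through the isomorphism $f \leftrightarrow A$, the matrices $A^S$ are exactly the homomorphisms described in the statement, so they form a basis of $\Hom_G(\mathbb{F}[X], \mathbb{F}[Y])$. The argument is routine linear algebra; the only point requiring care is the index bookkeeping in the second step, especially keeping the left action straight (as the paper uses left actions, unlike the right actions of~\cite{Brundan}), so that the equivariance condition emerges as invariance under the diagonal action on $X \times Y$ rather than some transpose or inverse variant. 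I expect this bookkeeping to be the main, if modest, obstacle.
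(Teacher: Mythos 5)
Your argument is correct and is the standard one; the paper in fact states Proposition~\ref{basiscross} without proof (treating it as a known fact about permutation modules), so there is nothing to compare against. Your translation of $G$-equivariance into the condition $a_{g\cdot i,\,g\cdot j}=a_{i,j}$, i.e.\ constancy on orbits of the diagonal action, together with the observation that the orbit indicator matrices form a basis of the orbit-constant matrices over any field, fully establishes the claim with the correct index conventions.
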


We will now define the Schur algebra for a fixed $n\geq 1$.
\begin{definition}
    For $\lambda$ a composition of $S_n$, write $M_{\lambda}\coloneqq \Ind_{S_{\lambda}}^{S_n}(\triv_{S_{\lambda}})$. 
\end{definition}
\begin{definition}
    The \emph{Schur algebra of degree $n$} is $\End_{S_n}(\bigoplus_{\lambda}M_{\lambda})$ where $\lambda$ ranges over all compositions of $n$.
\end{definition}

 The Schur algebra is an algebra with idempotents: $e_{\mu}:\oplus M_{\lambda}\rightarrow \oplus M_{\lambda}$, where $e_{\mu}|_{M_{\mu}} = \mathrm{id}$ and $e_{\mu}|_{M_{\lambda \ne \mu}} = 0$. Moreover, $\Hom_{S_n}(M_{\lambda}, M_{\mu})\cong e_{\mu} \End_{S_n}(\oplus_{\lambda}M_\lambda) e_{\lambda}$.

\begin{remark}
     It is important to note that $\End_{S_n}(\oplus_{\lambda}M_\lambda)\cong \oplus_{\mu,\lambda}\Hom_{S_n}(M_{\mu}, M_\lambda)$, so we often just focus on finding a basis of $\Hom_{S_n}(M_{\mu}, M_\lambda)$ over all compositions $\mu,\lambda$ to derive a basis for the entire algebra.
\end{remark}
As an example, we can consider $\End_{S_4}(M_{2,2})$. By Proposition~\ref{basiscross}, this endomorphism algebra has a basis stemming from the orbits of $S_4$ on $(S_4 / S_2\times S_2)\times (S_4 / S_2\times S_2)$. These correspond to three matrices:
\begin{center}
    \[E_1 = \begin{bmatrix}
1 & 0 & 0 & 0 & 0 & 0  \\
0 & 1 & 0 & 0 & 0 & 0  \\
0 & 0 & 1 & 0 & 0 & 0  \\
0 & 0 & 0 & 1 & 0 & 0  \\
0 & 0 & 0 & 0 & 1 & 0  \\
0 & 0 & 0 & 0 & 0 & 1  \\
\end{bmatrix},\]
\[E_2 = \begin{bmatrix}
0 & 0 & 0 & 0 & 0 & 1  \\
0 & 0 & 0 & 0 & 1 & 0  \\
0 & 0 & 0 & 1 & 0 & 0  \\
0 & 0 & 1 & 0 & 0 & 0  \\
0 & 1 & 0 & 0 & 0 & 0  \\
1 & 0 & 0 & 0 & 0 & 0  \\
\end{bmatrix},\]
\[E_3 = \begin{bmatrix}
0 & 1 & 1 & 1 & 1 & 0  \\
1 & 0 & 1 & 1 & 0 & 1  \\
1 & 1 & 0 & 0 & 1 & 1  \\
1 & 1 & 0 & 0 & 1 & 1  \\
1 & 0 & 1 & 1 & 0 & 1  \\
0 & 1 & 1 & 1 & 1 & 0  \\
\end{bmatrix}.\]
\end{center}
In particular, note that 
\begin{equation}\label{matrixex}
E_3\cdot E_3 = 4E_1+4E_2+2E_3
\end{equation}
so, as expected, the algebra is closed under multiplication. Following~\cite[Definition 4.2]{Brundan}, we formalize the construction of a category for the Schur algebra.

\begin{definition}
    The \emph{Schur category}, denoted $\Schur$, is the category with compositions and objects and morphisms: 
    \[
    \Hom_{\Schur}(\mu, \lambda):=\begin{cases}\Hom_{S_n}(M_{\mu}, M_\lambda) \quad \text{if} \quad \sum \lambda_i = \sum \mu_i \\
    0 \quad \text{otherwise}.
    \end{cases}.
    \]
\end{definition}

\begin{remark}
    It is further noted in~\cite[Definition 4.2]{Brundan} that $\Schur$ is a strict monoidal category. There is a standard graphical calculus which is convenient for describing strict monoidal categories. This is made precise in the following section. 
\end{remark}

\subsection{Web Diagrams and Chicken Foot Diagrams}\label{sec:web}

In light of the computations of structure constants for the algebra $\End_{S_n}(\mathbb{F}[S_n / S_\lambda])$, such as those for $\End_{S_4}(\mathbb{F}[S_4 / S_2\times S_2])$ in Subsection~\ref{sec:schur}, web diagrams in the polynomial web category were studied in~\cite{Brundan}. We now informally recall some key aspects of this construction, for more precise definitions see~\cite[Definition 4.7]{Brundan}.
\begin{definition}
   For $\mu, \lambda$ compositions of $n$, a $\lambda \times \mu$ \emph{web diagram} consists of strings determined by $\mu$ at the bottom that split, merge, and cross each other, ending at $\lambda$ at the top.
\end{definition}
Below is an example of a web diagram with $\mu = (3,2,4)$ and $\lambda = (5,4)$,
\begin{align*}
  \begin{tikzpicture}[anchorbase,scale=1.5]
\draw[-,thick] (.2,.5) to (.2,.35);
\draw[-,thick] (.6,.5) to (.6,.35);
\draw[-] (0,-.4) to (.2,.4);
\draw[-] (0,-.4) to (.6,.4);
\draw[-] (.4,-.4) to (.6,.4);
\draw[-] (.4,-.4) to (.2,.4);
\draw[-] (.8,-.4) to (.2,.4);
\draw[-] (.8,-.4) to (.6,.4);
\draw[-] (.4,-.5) to (.4,-.4);
\draw[-] (.8,-.5) to (.8,-.4);
\draw[-] (0,-.5) to (0,-.4);
\node at (0.05,0.15) {$\scriptstyle 1$};
\node at (0.75,0.15) {$\scriptstyle 1$};
\node at (0,-0.6) {$\scriptstyle 3$};
\node at (0.4,-0.6) {$\scriptstyle 2$};
\node at (0.8,-0.6) {$\scriptstyle 4$};
\node at (0.2,0.6) {$\scriptstyle 5$};
\node at (0.6,0.6) {$\scriptstyle 4$};
\end{tikzpicture}.
\end{align*}Note that all of the string thicknesses are determinable from the given numbers. Additionally, this web diagram has two properties that make it especially nice to use.
\begin{definition}
   A web diagram is a \emph{chicken foot diagram} if, from the bottom to the top, the generators are splits, crossings, and merges in that order.
\end{definition}
\begin{definition}
    A chicken foot diagram is \emph{reduced} if every pair of strings intersect, merge, and split together at most once.
\end{definition}
When we are considering the strucutre of $\End_{S_n}(\mathbb{F}[S_n /S_\lambda])$, we examine various $\lambda\times \lambda$ chicken foot diagrams, and then consider multiplication of diagrams as putting one on top of another.  We have the following three general relations for reducing conjoined diagrams down to a linear combination of reduced chicken foot diagrams:
\begin{align}\label{introsplitchoice}
\begin{tikzpicture}[anchorbase]
	\draw[-,thick] (0.35,-.3) to (0.08,0.14);
	\draw[-,thick] (0.1,-.3) to (-0.04,-0.06);
	\draw[-,line width=1pt] (0.085,.14) to (-0.035,-0.06);
	\draw[-,thick] (-0.2,-.3) to (0.07,0.14);
	\draw[-,line width=2pt] (0.08,.45) to (0.08,.1);
        \node at (0.45,-.41) {$\scriptstyle c$};
        \node at (0.07,-.4) {$\scriptstyle b$};
        \node at (-0.28,-.41) {$\scriptstyle a$};
\end{tikzpicture}
&=
\begin{tikzpicture}[anchorbase]
	\draw[-,thick] (0.36,-.3) to (0.09,0.14);
	\draw[-,thick] (0.06,-.3) to (0.2,-.05);
	\draw[-,line width=1pt] (0.07,.14) to (0.19,-.06);
	\draw[-,thick] (-0.19,-.3) to (0.08,0.14);
	\draw[-,line width=2pt] (0.08,.45) to (0.08,.1);
        \node at (0.45,-.41) {$\scriptstyle c$};
        \node at (0.07,-.4) {$\scriptstyle b$};
        \node at (-0.28,-.41) {$\scriptstyle a$};
\end{tikzpicture}\:,
\qquad
\begin{tikzpicture}[anchorbase]
	\draw[-,thick] (0.35,.3) to (0.08,-0.14);
	\draw[-,thick] (0.1,.3) to (-0.04,0.06);
	\draw[-,line width=1pt] (0.085,-.14) to (-0.035,0.06);
	\draw[-,thick] (-0.2,.3) to (0.07,-0.14);
	\draw[-,line width=2pt] (0.08,-.45) to (0.08,-.1);
        \node at (0.45,.4) {$\scriptstyle c$};
        \node at (0.07,.42) {$\scriptstyle b$};
        \node at (-0.28,.4) {$\scriptstyle a$};
\end{tikzpicture}
=\begin{tikzpicture}[anchorbase]
	\draw[-,thick] (0.36,.3) to (0.09,-0.14);
	\draw[-,thick] (0.06,.3) to (0.2,.05);
	\draw[-,line width=1pt] (0.07,-.14) to (0.19,.06);
	\draw[-,thick] (-0.19,.3) to (0.08,-0.14);
	\draw[-,line width=2pt] (0.08,-.45) to (0.08,-.1);
        \node at (0.45,.4) {$\scriptstyle c$};
        \node at (0.07,.42) {$\scriptstyle b$};
        \node at (-0.28,.4) {$\scriptstyle a$};
\end{tikzpicture}\:,\end{align}
\begin{align}
\label{introtrivial}
\begin{tikzpicture}[anchorbase,scale=.8]
	\draw[-,line width=2pt] (0.08,-.8) to (0.08,-.5);
	\draw[-,line width=2pt] (0.08,.3) to (0.08,.6);
\draw[-,thick] (0.1,-.51) to [out=45,in=-45] (0.1,.31);
\draw[-,thick] (0.06,-.51) to [out=135,in=-135] (0.06,.31);
        \node at (-.33,-.05) {$\scriptstyle a$};
        \node at (.45,-.05) {$\scriptstyle b$};
\end{tikzpicture}
&= 
\binom{a+b}{a}\:\:
\begin{tikzpicture}[anchorbase,scale=.8]
	\draw[-,line width=2pt] (0.08,-.8) to (0.08,.6);
        \node at (.62,-.05) {$\scriptstyle a+b$};
\end{tikzpicture},\end{align}
\begin{align}
\label{intromergesplit}
\begin{tikzpicture}[anchorbase,scale=1]
	\draw[-,line width=1.2pt] (0,0) to (.275,.3) to (.275,.7) to (0,1);
	\draw[-,line width=1.2pt] (.6,0) to (.315,.3) to (.315,.7) to (.6,1);
        \node at (0,1.13) {$\scriptstyle b$};
        \node at (0.63,1.13) {$\scriptstyle d$};
        \node at (0,-.1) {$\scriptstyle a$};
        \node at (0.63,-.1) {$\scriptstyle c$};
\end{tikzpicture}
&=
\sum_{\substack{0 \leq s \leq \min(a,b)\\0 \leq t \leq \min(c,d)\\t-s=d-a}}
\begin{tikzpicture}[anchorbase,scale=1]
	\draw[-,thick] (0.58,0) to (0.58,.2) to (.02,.8) to (.02,1);
	\draw[-,thick] (0.02,0) to (0.02,.2) to (.58,.8) to (.58,1);
	\draw[-,thin] (0,0) to (0,1);
	\draw[-,line width=1pt] (0.61,0) to (0.61,1);
        \node at (0,1.13) {$\scriptstyle b$};
        \node at (0.6,1.13) {$\scriptstyle d$};
        \node at (0,-.1) {$\scriptstyle a$};
        \node at (0.6,-.1) {$\scriptstyle c$};
        \node at (-0.1,.5) {$\scriptstyle s$};
        \node at (0.77,.5) {$\scriptstyle t$};
\end{tikzpicture}.
\end{align}
This also lets you deduce other relations whose proofs are presented in~\cite[Appendix A]{Brundan}:
\begin{align}
\label{introswitch}
\begin{tikzpicture}[anchorbase,scale=1]
	\draw[-,line width=1.2pt] (0,0) to (.6,1);
	\draw[-,line width=1.2pt] (0,1) to (.6,0);
        \node at (0,-.1) {$\scriptstyle a$};
        \node at (0.6,-0.1) {$\scriptstyle b$};
\end{tikzpicture}
&=\sum_{t=0}^{\min(a,b)}
(-1)^t
\begin{tikzpicture}[anchorbase,scale=1]
	\draw[-,thick] (0,0) to (0,1);
	\draw[-,thick] (0.015,0) to (0.015,.2) to (.57,.4) to (.57,.6)
        to (.015,.8) to (.015,1);
	\draw[-,line width=1.2pt] (0.6,0) to (0.6,1);
        \node at (0.6,-.1) {$\scriptstyle b$};
        \node at (0,-.1) {$\scriptstyle a$};
        \node at (-0.1,.5) {$\scriptstyle t$};
\end{tikzpicture}
, \end{align}
\begin{align}
\label{introswallows}
\begin{tikzpicture}[anchorbase,scale=.7]
	\draw[-,line width=2pt] (0.08,.3) to (0.08,.5);
\draw[-,thick] (-.2,-.8) to [out=45,in=-45] (0.1,.31);
\draw[-,thick] (.36,-.8) to [out=135,in=-135] (0.06,.31);
        \node at (-.3,-.95) {$\scriptstyle a$};
        \node at (.45,-.95) {$\scriptstyle b$};
\end{tikzpicture}
=\begin{tikzpicture}[anchorbase,scale=.7]
	\draw[-,line width=2pt] (0.08,.1) to (0.08,.5);
\draw[-,thick] (.46,-.8) to [out=100,in=-45] (0.1,.11);
\draw[-,thick] (-.3,-.8) to [out=80,in=-135] (0.06,.11);
        \node at (-.3,-.95) {$\scriptstyle a$};
        \node at (.43,-.95) {$\scriptstyle b$};
\end{tikzpicture}
,\end{align}
\begin{align}\label{introsliders}
\begin{tikzpicture}[anchorbase,scale=0.7]
	\draw[-,thick] (0.4,0) to (-0.6,1);
	\draw[-,thick] (0.08,0) to (0.08,1);
	\draw[-,thick] (0.1,0) to (0.1,.6) to (.5,1);
        \node at (0.6,1.13) {$\scriptstyle c$};
        \node at (0.1,1.16) {$\scriptstyle b$};
        \node at (-0.65,1.13) {$\scriptstyle a$};
\end{tikzpicture} = \begin{tikzpicture}[anchorbase,scale=0.7]
	\draw[-,thick] (0.7,0) to (-0.3,1);
	\draw[-,thick] (0.08,0) to (0.08,1);
	\draw[-,thick] (0.1,0) to (0.1,.2) to (.9,1);
        \node at (0.9,1.13) {$\scriptstyle c$};
        \node at (0.1,1.16) {$\scriptstyle b$};
        \node at (-0.4,1.13) {$\scriptstyle a$};
\end{tikzpicture}
,\end{align} 
\begin{align}
\label{introsymmetric}
\begin{tikzpicture}[anchorbase,scale=0.8]
	\draw[-,thick] (0.28,0) to[out=90,in=-90] (-0.28,.6);
	\draw[-,thick] (-0.28,0) to[out=90,in=-90] (0.28,.6);
	\draw[-,thick] (0.28,-.6) to[out=90,in=-90] (-0.28,0);
	\draw[-,thick] (-0.28,-.6) to[out=90,in=-90] (0.28,0);
        \node at (0.3,-.75) {$\scriptstyle b$};
        \node at (-0.3,-.75) {$\scriptstyle a$};
\end{tikzpicture}
= \begin{tikzpicture}[anchorbase,scale=0.8]
	\draw[-,thick] (0.2,-.6) to (0.2,.6);
	\draw[-,thick] (-0.2,-.6) to (-0.2,.6);
        \node at (0.2,-.75) {$\scriptstyle b$};
        \node at (-0.2,-.75) {$\scriptstyle a$};
\end{tikzpicture}
,\end{align}
\begin{align}
\label{introbraid}
\begin{tikzpicture}[anchorbase,scale=0.8]
	\draw[-,thick] (0.45,.6) to (-0.45,-.6);
	\draw[-,thick] (0.45,-.6) to (-0.45,.6);
        \draw[-,thick] (0,-.6) to[out=90,in=-90] (-.45,0);
        \draw[-,thick] (-0.45,0) to[out=90,in=-90] (0,0.6);
        \node at (0,-.77) {$\scriptstyle b$};
        \node at (0.5,-.77) {$\scriptstyle c$};
        \node at (-0.5,-.77) {$\scriptstyle a$};
\end{tikzpicture}
 =
\begin{tikzpicture}[anchorbase,scale=0.8]
	\draw[-,thick] (0.45,.6) to (-0.45,-.6);
	\draw[-,thick] (0.45,-.6) to (-0.45,.6);
        \draw[-,thick] (0,-.6) to[out=90,in=-90] (.45,0);
        \draw[-,thick] (0.45,0) to[out=90,in=-90] (0,0.6);
        \node at (0,-.77) {$\scriptstyle b$};
        \node at (0.5,-.77) {$\scriptstyle c$};
        \node at (-0.5,-.77) {$\scriptstyle a$};
\end{tikzpicture}.
\end{align}
In the case of $\End_{S_4}(\mathbb{F}[S_4 / S_2\times S_2])$, there are three possible $(2,2) \times (2,2)$ reduced chicken foot diagrams which serve as a basis:
\begin{align}D_1&=
\begin{tikzpicture}[anchorbase,scale=.8]
\draw[-,line width=2pt] (0.08,-.8) to (0.08,-.5);
\draw[-,line width=2pt] (0.08,.3) to (0.08,.6);
\draw[-,line width=1pt] (0.08,-.51) to  (0.08,.31);
\draw[-,line width=2pt] (1.08,-.8) to (1.08,-.5);
\draw[-,line width=2pt] (1.08,.3) to (1.08,.6);
\draw[-,line width=1pt] (1.08,-.51) to  (1.08,.31);
\node at (-.33,-.05) {$\scriptstyle 2$};
\node at (1.45,-.05) {$\scriptstyle 2$};
\end{tikzpicture}
,D_2=
\begin{tikzpicture}[anchorbase,scale=.8]
\draw[-,line width=2pt] (0.08,-.8) to (0.08,-.5);
\draw[-,line width=2pt] (0.08,.3) to (0.08,.6);
\draw[-,line width=1pt] (0.08,-.51) to  (1.08,.31); 
\draw[-,line width=2pt] (1.08,-.8) to (1.08,-.5);
\draw[-,line width=2pt] (1.08,.3) to (1.08,.6);
\draw[-,line width=1pt] (1.08,-.51) to  (0.08,.31);
\node at (-.33,-.05) {$\scriptstyle 2$};
\node at (1.45,-.05) {$\scriptstyle 2$};
\end{tikzpicture}
,D_3=
\begin{tikzpicture}[anchorbase,scale=.8]
\draw[-,line width=2pt] (0.08,-.8) to (0.08,-.5);
\draw[-,line width=2pt] (0.08,.3) to (0.08,.6);
\draw[-,line width=1pt] (0.08,-.51) to  (1.08,.31); 
\draw[-,line width=2pt] (1.08,-.8) to (1.08,-.5);
\draw[-,line width=2pt] (1.08,.3) to (1.08,.6);
\draw[-,line width=1pt] (1.08,-.51) to  (0.08,.31);
\draw[-,line width=1pt] (0.08,-.51) to  (0.08,.31);
\draw[-,line width=1pt] (1.08,-.51) to  (1.08,.31);
\node at (-.33,-.05) {$\scriptstyle 1$};
\node at (1.45,-.05) {$\scriptstyle 1$};
\node at (.33,.25) {$\scriptstyle 1$};
\node at (.77,.25) {$\scriptstyle 1$};
\end{tikzpicture}
.&
\end{align}
As an example to see how multiplication works, consider $D_3\cdot D_3$. We have the following diagram as a result,
\[
D_3\cdot D_3 = 
    \begin{tikzpicture}[anchorbase,scale=.8]
\draw[-,line width=2pt] (0.08,-.8) to (0.08,-.5);
\draw[-,line width=2pt] (0.08,.3) to (0.08,.6);
\draw[-,line width=1pt] (0.08,-.51) to  (1.08,.31); 
\draw[-,line width=2pt] (1.08,-.8) to (1.08,-.5);
\draw[-,line width=2pt] (1.08,.3) to (1.08,.6);
\draw[-,line width=1pt] (1.08,-.51) to  (0.08,.31);
\draw[-,line width=1pt] (0.08,-.51) to  (0.08,.31);
\draw[-,line width=1pt] (1.08,-.51) to  (1.08,.31);
\node at (-.33,-.05) {$\scriptstyle 1$};
\node at (1.45,-.05) {$\scriptstyle 1$};
\draw[-,line width=2pt] (0.08,1.4) to (0.08,1.7);
\draw[-,line width=1pt] (0.08,.59) to  (1.08,1.41); 
\draw[-,line width=2pt] (1.08,1.4) to (1.08,1.7);
\draw[-,line width=1pt] (1.08,.59) to  (0.08,1.41);
\draw[-,line width=1pt] (0.08,.59) to  (0.08,1.41);
\draw[-,line width=1pt] (1.08,.59) to  (1.08,1.41);
\node at (-.33,1.05) {$\scriptstyle 1$};
\node at (1.45,1.05) {$\scriptstyle 1$};
\node at (.33,1.35) {$\scriptstyle 1$};
\node at (.77,1.35) {$\scriptstyle 1$};
\end{tikzpicture}.
\]
Utilizing Equation~\ref{intromergesplit}, we have
\[
\begin{tikzpicture}[anchorbase,scale=1]
	\draw[-,line width=1.2pt] (0,0) to (.275,.3) to (.275,.7) to (0,1);
	\draw[-,line width=1.2pt] (.6,0) to (.315,.3) to (.315,.7) to (.6,1);
        \node at (0,1.13) {$\scriptstyle 1$};
        \node at (0.63,1.13) {$\scriptstyle 1$};
        \node at (0,-.1) {$\scriptstyle 1$};
        \node at (0.63,-.1) {$\scriptstyle 1$};
\end{tikzpicture} =
\begin{tikzpicture}[anchorbase,scale=.8]
\draw[-,line width=2pt] (0.08,-.8) to (0.08,-.5);
\draw[-,line width=2pt] (0.08,.3) to (0.08,.6);
\draw[-,line width=1pt] (0.08,-.51) to  (1.08,.31); 
\draw[-,line width=2pt] (1.08,-.8) to (1.08,-.5);
\draw[-,line width=2pt] (1.08,.3) to (1.08,.6);
\draw[-,line width=1pt] (1.08,-.51) to  (0.08,.31);
\node at (.33,.25) {$\scriptstyle 1$};
\node at (.77,.25) {$\scriptstyle 1$};
\end{tikzpicture}
+
\begin{tikzpicture}[anchorbase,scale=.8]
\draw[-,line width=2pt] (0.08,-.8) to (0.08,-.5);
\draw[-,line width=2pt] (0.08,.3) to (0.08,.6);
\draw[-,line width=1pt] (0.08,-.51) to  (0.08,.31);
\draw[-,line width=2pt] (1.08,-.8) to (1.08,-.5);
\draw[-,line width=2pt] (1.08,.3) to (1.08,.6);
\draw[-,line width=1pt] (1.08,-.51) to  (1.08,.31);
\node at (-.33,-.05) {$\scriptstyle 1$};
\node at (1.45,-.05) {$\scriptstyle 1$};
\end{tikzpicture}.
\]
Taking this substitution twice we get
\[
    \begin{tikzpicture}[anchorbase,scale=.8]
\draw[-,line width=2pt] (0.08,-.8) to (0.08,-.5);
\draw[-,line width=2pt] (0.08,.3) to (0.08,.6);
\draw[-,line width=1pt] (0.08,-.51) to  (1.08,.31); 
\draw[-,line width=2pt] (1.08,-.8) to (1.08,-.5);
\draw[-,line width=2pt] (1.08,.3) to (1.08,.6);
\draw[-,line width=1pt] (1.08,-.51) to  (0.08,.31);
\draw[-,line width=1pt] (0.08,-.51) to  (0.08,.31);
\draw[-,line width=1pt] (1.08,-.51) to  (1.08,.31);
\node at (-.33,-.05) {$\scriptstyle 1$};
\node at (1.45,-.05) {$\scriptstyle 1$};
\draw[-,line width=2pt] (0.08,1.4) to (0.08,1.7);
\draw[-,line width=1pt] (0.08,.59) to  (1.08,1.41); 
\draw[-,line width=2pt] (1.08,1.4) to (1.08,1.7);
\draw[-,line width=1pt] (1.08,.59) to  (0.08,1.41);
\draw[-,line width=1pt] (0.08,.59) to  (0.08,1.41);
\draw[-,line width=1pt] (1.08,.59) to  (1.08,1.41);
\node at (-.33,1.05) {$\scriptstyle 1$};
\node at (1.45,1.05) {$\scriptstyle 1$};
\node at (.33,1.35) {$\scriptstyle 1$};
\node at (.77,1.35) {$\scriptstyle 1$};
\end{tikzpicture} = 
\begin{tikzpicture}[anchorbase,scale=.8]
\draw[-,line width=2pt] (0.08,-.8) to (0.08,-.5);
\draw[-,line width=2pt] (0.08,.3) to (0.08,.6);
\draw[-,line width=1pt] (0.08,-.51) to [out=135, in=225] (0.08,.31);
\draw[-,line width=1pt] (0.08,-.51) to [out=45, in=-45] (0.08,.31);
\draw[-,line width=2pt] (1.08,-.8) to (1.08,-.5);
\draw[-,line width=2pt] (1.08,.3) to (1.08,.6);
\draw[-,line width=1pt] (1.08,-.51) to [out=135, in=225] (1.08,.31);
\draw[-,line width=1pt] (1.08,-.51) to [out=45, in=-45] (1.08,.31);
\node at (-.24,.05) {$\scriptstyle 1$};
\node at (.39,.05) {$\scriptstyle 1$};
\node at (1.35,.05) {$\scriptstyle 1$};
\node at (.73,.05) {$\scriptstyle 1$};
\end{tikzpicture}
+
\begin{tikzpicture}[anchorbase,scale=.8]
\draw[-,line width=2pt] (0.08,-.8) to (0.08,-.5);
\draw[-,line width=2pt] (0.08,.3) to (0.08,.6);
\draw[-,line width=1pt] (0.08,-.51) to [out=90, in=180] (1.08,.31);
\draw[-,line width=1pt] (0.08,-.51) to [out=0, in=270] (1.08,.31);
\draw[-,line width=2pt] (1.08,-.8) to (1.08,-.5);
\draw[-,line width=2pt] (1.08,.3) to (1.08,.6);
\draw[-,line width=1pt] (1.08,-.51) to [out=180, in=270] (0.08,.31);
\draw[-,line width=1pt] (1.08,-.51) to [out=90, in=0] (0.08,.31);
\node at (-.24,.15) {$\scriptstyle 1$};
\node at (.39,.4) {$\scriptstyle 1$};
\node at (1.35,.15) {$\scriptstyle 1$};
\node at (.73,.4) {$\scriptstyle 1$};
\end{tikzpicture}
+2
\begin{tikzpicture}[anchorbase,scale=.8]
\draw[-,line width=2pt] (0.08,-.8) to (0.08,-.5);
\draw[-,line width=2pt] (0.08,.3) to (0.08,.6);
\draw[-,line width=1pt] (0.08,-.51) to  (1.08,.31); 
\draw[-,line width=2pt] (1.08,-.8) to (1.08,-.5);
\draw[-,line width=2pt] (1.08,.3) to (1.08,.6);
\draw[-,line width=1pt] (1.08,-.51) to  (0.08,.31);
\draw[-,line width=1pt] (0.08,-.51) to  (0.08,.31);
\draw[-,line width=1pt] (1.08,-.51) to  (1.08,.31);
\node at (-.33,-.05) {$\scriptstyle 1$};
\node at (1.45,-.05) {$\scriptstyle 1$};
\node at (.33,.25) {$\scriptstyle 1$};
\node at (.77,.25) {$\scriptstyle 1$};
\end{tikzpicture}
. \]
Using Equation~\ref{introtrivial} and Equation~\ref{introsliders} this becomes
\[   \begin{tikzpicture}[anchorbase,scale=.8]
\draw[-,line width=2pt] (0.08,-.8) to (0.08,-.5);
\draw[-,line width=2pt] (0.08,.3) to (0.08,.6);
\draw[-,line width=1pt] (0.08,-.51) to  (1.08,.31); 
\draw[-,line width=2pt] (1.08,-.8) to (1.08,-.5);
\draw[-,line width=2pt] (1.08,.3) to (1.08,.6);
\draw[-,line width=1pt] (1.08,-.51) to  (0.08,.31);
\draw[-,line width=1pt] (0.08,-.51) to  (0.08,.31);
\draw[-,line width=1pt] (1.08,-.51) to  (1.08,.31);
\node at (-.33,-.05) {$\scriptstyle 1$};
\node at (1.45,-.05) {$\scriptstyle 1$};
\draw[-,line width=2pt] (0.08,1.4) to (0.08,1.7);
\draw[-,line width=1pt] (0.08,.59) to  (1.08,1.41); 
\draw[-,line width=2pt] (1.08,1.4) to (1.08,1.7);
\draw[-,line width=1pt] (1.08,.59) to  (0.08,1.41);
\draw[-,line width=1pt] (0.08,.59) to  (0.08,1.41);
\draw[-,line width=1pt] (1.08,.59) to  (1.08,1.41);
\node at (-.33,1.05) {$\scriptstyle 1$};
\node at (1.45,1.05) {$\scriptstyle 1$};
\node at (.33,1.35) {$\scriptstyle 1$};
\node at (.77,1.35) {$\scriptstyle 1$};
\end{tikzpicture} = 4\begin{tikzpicture}[anchorbase,scale=.8]
\draw[-,line width=2pt] (0.08,-.8) to (0.08,-.5);
\draw[-,line width=2pt] (0.08,.3) to (0.08,.6);
\draw[-,line width=1pt] (0.08,-.51) to  (0.08,.31);
\draw[-,line width=2pt] (1.08,-.8) to (1.08,-.5);
\draw[-,line width=2pt] (1.08,.3) to (1.08,.6);
\draw[-,line width=1pt] (1.08,-.51) to  (1.08,.31);
\node at (-.33,-.05) {$\scriptstyle 2$};
\node at (1.45,-.05) {$\scriptstyle 2$};
\end{tikzpicture}
+
\begin{tikzpicture}[anchorbase,scale=.8]
\draw[-,line width=2pt] (0.08,-.8) to (0.08,-.5);
\draw[-,line width=2pt] (0.08,.3) to (0.08,.6);
\draw[-,line width=1pt] (0.08,-.1) to (1.08,.31);
\draw[-,line width=1pt] (0.08,-.51) to [out=135, in=225] (.08,-.1);
\draw[-,line width=1pt] (0.08,-.51) to [out=45, in=315] (.08,-.1);
\draw[-,line width=2pt] (1.08,-.8) to (1.08,-.5);
\draw[-,line width=2pt] (1.08,.3) to (1.08,.6);
\draw[-,line width=1pt] (1.08,-.1) to (.08,.31);
\draw[-,line width=1pt] (1.08,-.51) to [out=135, in=225] (1.08,-.1);
\draw[-,line width=1pt] (1.08,-.51) to [out=45, in=315] (1.08,-.1);
\node at (-.24,-.2) {$\scriptstyle 1$};
\node at (.39,.4) {$\scriptstyle 2$};
\node at (1.35,-.2) {$\scriptstyle 1$};
\node at (.73,.4) {$\scriptstyle 2$};
\end{tikzpicture}
+2
\begin{tikzpicture}[anchorbase,scale=.8]
\draw[-,line width=2pt] (0.08,-.8) to (0.08,-.5);
\draw[-,line width=2pt] (0.08,.3) to (0.08,.6);
\draw[-,line width=1pt] (0.08,-.51) to  (1.08,.31); 
\draw[-,line width=2pt] (1.08,-.8) to (1.08,-.5);
\draw[-,line width=2pt] (1.08,.3) to (1.08,.6);
\draw[-,line width=1pt] (1.08,-.51) to  (0.08,.31);
\draw[-,line width=1pt] (0.08,-.51) to  (0.08,.31);
\draw[-,line width=1pt] (1.08,-.51) to  (1.08,.31);
\node at (-.33,-.05) {$\scriptstyle 1$};
\node at (1.45,-.05) {$\scriptstyle 1$};
\node at (.33,.25) {$\scriptstyle 2$};
\node at (.77,.25) {$\scriptstyle 2$};
\end{tikzpicture}
, \]
\[   \begin{tikzpicture}[anchorbase,scale=.8]
\draw[-,line width=2pt] (0.08,-.8) to (0.08,-.5);
\draw[-,line width=2pt] (0.08,.3) to (0.08,.6);
\draw[-,line width=1pt] (0.08,-.51) to  (1.08,.31); 
\draw[-,line width=2pt] (1.08,-.8) to (1.08,-.5);
\draw[-,line width=2pt] (1.08,.3) to (1.08,.6);
\draw[-,line width=1pt] (1.08,-.51) to  (0.08,.31);
\draw[-,line width=1pt] (0.08,-.51) to  (0.08,.31);
\draw[-,line width=1pt] (1.08,-.51) to  (1.08,.31);
\node at (-.33,-.05) {$\scriptstyle 1$};
\node at (1.45,-.05) {$\scriptstyle 1$};
\draw[-,line width=2pt] (0.08,1.4) to (0.08,1.7);
\draw[-,line width=1pt] (0.08,.59) to  (1.08,1.41); 
\draw[-,line width=2pt] (1.08,1.4) to (1.08,1.7);
\draw[-,line width=1pt] (1.08,.59) to  (0.08,1.41);
\draw[-,line width=1pt] (0.08,.59) to  (0.08,1.41);
\draw[-,line width=1pt] (1.08,.59) to  (1.08,1.41);
\node at (-.33,1.05) {$\scriptstyle 1$};
\node at (1.45,1.05) {$\scriptstyle 1$};
\node at (.33,1.35) {$\scriptstyle 1$};
\node at (.77,1.35) {$\scriptstyle 1$};
\end{tikzpicture} = 4\begin{tikzpicture}[anchorbase,scale=.8]
\draw[-,line width=2pt] (0.08,-.8) to (0.08,-.5);
\draw[-,line width=2pt] (0.08,.3) to (0.08,.6);
\draw[-,line width=1pt] (0.08,-.51) to  (0.08,.31);
\draw[-,line width=2pt] (1.08,-.8) to (1.08,-.5);
\draw[-,line width=2pt] (1.08,.3) to (1.08,.6);
\draw[-,line width=1pt] (1.08,-.51) to  (1.08,.31);
\node at (-.33,-.05) {$\scriptstyle 2$};
\node at (1.45,-.05) {$\scriptstyle 2$};
\end{tikzpicture}
+4
\begin{tikzpicture}[anchorbase,scale=.8]
\draw[-,line width=2pt] (0.08,-.8) to (0.08,-.5);
\draw[-,line width=2pt] (0.08,.3) to (0.08,.6);
\draw[-,line width=1pt] (0.08,-.51) to  (1.08,.31); 
\draw[-,line width=2pt] (1.08,-.8) to (1.08,-.5);
\draw[-,line width=2pt] (1.08,.3) to (1.08,.6);
\draw[-,line width=1pt] (1.08,-.51) to  (0.08,.31);
\node at (-.33,-.05) {$\scriptstyle 2$};
\node at (1.45,-.05) {$\scriptstyle 2$};
\end{tikzpicture}
+2
\begin{tikzpicture}[anchorbase,scale=.8]
\draw[-,line width=2pt] (0.08,-.8) to (0.08,-.5);
\draw[-,line width=2pt] (0.08,.3) to (0.08,.6);
\draw[-,line width=1pt] (0.08,-.51) to  (1.08,.31); 
\draw[-,line width=2pt] (1.08,-.8) to (1.08,-.5);
\draw[-,line width=2pt] (1.08,.3) to (1.08,.6);
\draw[-,line width=1pt] (1.08,-.51) to  (0.08,.31);
\draw[-,line width=1pt] (0.08,-.51) to  (0.08,.31);
\draw[-,line width=1pt] (1.08,-.51) to  (1.08,.31);
\node at (-.33,-.05) {$\scriptstyle 1$};
\node at (1.45,-.05) {$\scriptstyle 1$};
\node at (.33,.25) {$\scriptstyle 1$};
\node at (.77,.25) {$\scriptstyle 1$};
\end{tikzpicture}
,\]
\[D_3\cdot D_3 = 4D_1+4D_2+2D_3.\]
Observe that this is identical to Equation~\ref{matrixex}. In general it is a result of~\cite{Brundan} that $\lambda\times \mu$ reduced chicken foot diagrams have a direct correspondence with basis matrices of $\Hom_{S_n}(\mathbb{F}\left[S_n/S_{\mu}\right], \mathbb{F}\left[S_n/S_{\lambda}\right])$ and their multiplicative structure within the Schur algebra. 
Additionally, observe that in our computation of web diagram $D_3\cdot D_3$, which was neither chicken foot nor reduced, we first used Equation~\ref{intromergesplit} to write it as a sum of chicken foot diagrams, and then used Equation~\ref{introtrivial} and Equation~\ref{introsliders} to write these chicken foot diagrams in terms of reduced chicken foot diagrams.

\section{Definitions for the Hyperoctahedral Group}\label{sec:defsforhyper}
\subsection{Basic Definitions for the Hyperoctahedral Schur Algebra and Category}
First of all, the hyperoctahedral Schur algebra is defined in a similar fashion to the Schur algbera.
\begin{definition}
    For $\lambda$ a hypercomposition of $H_n$, we define a $H_n$-module $M_\lambda \coloneqq \mathbb{F}[H_n/H_\lambda]$.
\end{definition}
\begin{definition}
    The \emph{hyperoctahedral Schur algebra} of degree $n$ is $\End_{H_n}(\bigoplus_{\lambda}M_{\lambda})$ where $\lambda$ ranges over all hypercompositions of $H_n$.
\end{definition}
As with the Schur algebra, if we seek to find a basis of \[
\Hom_{H_n}(\mathbb{F}[H_n/H_\mu], \mathbb{F}[H_n/H_\lambda]) \cong \Hom_{H_n}(\mathbb{F}[I_\mu], \mathbb{F}[I_\lambda]),
\]
by Proposition~\ref{basiscross} it suffices to find the set of orbits of $H_n$ on $I_\lambda \times I_\mu$. Due to the description of the action of $H_n$ on $I_\lambda$ and $I_\mu$, we can categorize the set of orbits as follows.
\begin{definition}
For hypercompositions $\lambda, \mu$ of $H_n$, $\HMat_{\lambda, \mu}$ is defined as the set of $\ell(\lambda) \times \ell(\mu)$ matrices with non-negative integer coefficients such that the sum of the terms of the $i$-th row is $\lambda_i$, the sum of the terms of the $j$-th column is $\mu_j$, and the matrix exhibits a $180^\circ$ rotational symmetry (i.e $a_{i, j} = a_{\ell(\lambda)+1-i, \ell(\mu) + 1-j}$).
\end{definition}
\begin{remark}
    For the symmetric group, $\Mat_{\lambda, \mu}$ was similarly defined in~\cite{Brundan} for $\mu, \lambda$ compositions of $S_n$ except that the rotational symmetry condition was unneeded.
\end{remark}
\begin{definition}
For a matrix $A\in \HMat_{\lambda, \mu}$, we define 
\[
\Pi_A = \{(\bi, \bj)\in I_{\lambda}\times I_{\mu} \ | \{\# k | (\bi_k, \bj_k) = (i, j)\} = a_{i,j}\} .
\]
\end{definition}
\begin{theorem} \label{horbits}
    The set of orbits of $H_n$ on $I_\lambda \times I_\mu$ is precisely $\{\Pi_A| A\in \HMat_{\lambda, \mu}\}$.
\end{theorem}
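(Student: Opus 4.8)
The plan is to establish a bijection between the set of $H_n$-orbits on $I_\lambda \times I_\mu$ and the matrix set $\HMat_{\lambda,\mu}$ by showing that the map $A \mapsto \Pi_A$ is well-defined, that each $\Pi_A$ is a single orbit, and that these sets exhaust all orbits. The strategy mirrors the symmetric-group argument referenced from~\cite{Brundan}, with the new ingredient being the bookkeeping of the $180^\circ$ rotational symmetry condition on matrices, which encodes the anti-symmetry constraint $\bi_d + \bi_{2n+1-d} = \ell(\lambda)+1$ built into $I_\lambda$.

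\textbf{Step 1: the sets $\Pi_A$ partition $I_\lambda \times I_\mu$.} For any pair $(\bi,\bj) \in I_\lambda \times I_\mu$, define the integers $a_{i,j} := \#\{k \mid (\bi_k,\bj_k)=(i,j)\}$. These are manifestly non-negative, and since the number of $k$ with $\bi_k = i$ equals $\lambda_i$ (the defining property of $I_\lambda$), summing over $j$ gives the $i$-th row sum $\lambda_i$; summing over $i$ gives the $j$-th column sum $\mu_j$. Thus every pair lands in $\Pi_A$ for exactly one candidate matrix $A$, and the $\Pi_A$ are pairwise disjoint with union all of $I_\lambda \times I_\mu$. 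It remains to check that this $A$ actually lies in $\HMat_{\lambda,\mu}$, i.e.\ satisfies $a_{i,j}=a_{\ell(\lambda)+1-i,\ell(\mu)+1-j}$.

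\textbf{Step 2: the rotational symmetry is forced.} Here I would use the anti-symmetry of tuples in $I_\lambda$ and $I_\mu$. The assignment $k \mapsto 2n+1-k$ is a bijection of $\{1,\dots,2n\}$, and for each such $k$ we have $\bi_{2n+1-k} = \ell(\lambda)+1-\bi_k$ and $\bj_{2n+1-k} = \ell(\mu)+1-\bj_k$. Hence $(\bi_k,\bj_k)=(i,j)$ if and only if $(\bi_{2n+1-k},\bj_{2n+1-k})=(\ell(\lambda)+1-i,\ell(\mu)+1-j)$, so the involution $k \mapsto 2n+1-k$ restricts to a bijection between the index sets counted by $a_{i,j}$ and by $a_{\ell(\lambda)+1-i,\ell(\mu)+1-j}$. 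This yields the required symmetry and places $A \in \HMat_{\lambda,\mu}$.

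\textbf{Step 3: each $\Pi_A$ is a single $H_n$-orbit.} I must show both orbit-invariance and transitivity within $\Pi_A$. For invariance, note $H_n$ acts diagonally by $h\cdot(\bi,\bj) = ((\bi_{h^{-1}(k)})_k,(\bj_{h^{-1}(k)})_k)$, which only permutes the index $k$ and so preserves the multiset of pairs $(\bi_k,\bj_k)$; thus $A$ is constant on each orbit, and orbits are unions of the $\Pi_A$. For transitivity, given two pairs $(\bi,\bj),(\bi',\bj') \in \Pi_A$, I would construct $h \in H_n$ carrying one to the other. The $\mathbf{expected main obstacle}$ is precisely this construction: in the symmetric-group case one freely chooses any bijection of the column blocks matching up positions with equal $(\bi_k,\bj_k)$ values, but here the chosen permutation of $\{1,\dots,2n\}$ must additionally commute with the involution $k \mapsto 2n+1-k$ in order to lie in $H_n \subseteq S_{2n}$ (recall $H_n$ is exactly the subgroup of $S_{2n}$ whose crossing diagrams are symmetric about the vertical axis). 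Concretely, for each pair value $(i,j)$ the positions are partitioned by the involution into those with $k \le n$, those with $k > n$, and possibly a central fixed point; I would match positions of $(\bi,\bj)$ to positions of $(\bi',\bj')$ in an involution-equivariant way, using the rotational symmetry $a_{i,j}=a_{\ell(\lambda)+1-i,\ell(\mu)+1-j}$ to guarantee that the counts on each side of the axis pair up correctly. Verifying that such an equivariant matching always exists—and handling the central block carefully—is the crux; once it is in hand, the resulting $h$ lies in $H_n$ and sends $(\bi,\bj)$ to $(\bi',\bj')$, completing transitivity and hence the theorem.
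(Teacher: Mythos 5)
Your proposal is correct and takes essentially the same route as the paper: show each $\Pi_A$ is preserved by the diagonal action since the action only permutes the multiset of pairs $(\bi_k,\bj_k)$, and then achieve transitivity by choosing the matching permutation equivariantly with respect to the involution $k\mapsto 2n+1-k$ so that it lands in $H_n\subseteq S_{2n}$. The step you flag as the crux is exactly what the paper does (and it goes through: the involution is fixed-point-free, so for the self-paired central value the positions decompose into involution-pairs of equal even count on both sides, and for all other values the rotational symmetry $a_{i,j}=a_{\ell(\lambda)+1-i,\ell(\mu)+1-j}$ lets you choose the matching on one representative and extend by equivariance); your Steps 1--2 additionally spell out why every pair's type matrix automatically lies in $\HMat_{\lambda,\mu}$, which the paper leaves implicit.
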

\begin{proof}
    The proof will be broken down into two parts. We will first show that for all $h\in H_n$, $h\cdot \Pi_A = \Pi_A$ and then prove that any two $(\bi, \bj), (\bi', \bj') \in \Pi_A$ lie in the same orbit.
    First off, note that we have
    \[h\cdot ((\bi_1, \ldots, \bi_{2n}), (\bj_1, \ldots, \bj_{2n})) =((\bi_{h^{-1}(1)}, \ldots, \bi_{h^{-1}(2n)}), (\bj_{h^{-1}(1)}, \ldots, \bj_{h^{-1}(2n)})) .\]
    In particular, the two multisets \[\{(\bi_{1}, \bj_{1}), \ldots, (\bi_{2n}, \bj_{2n})\},\quad \quad \{(\bi_{h^{-1}(1)}, \bj_{h^{-1}(1)}), \ldots,(\bi_{h^{-1}(2n)}, \bj_{h^{-1}(2n)})\}\] are the same as $h^{-1}$ merely permute the indexes from $1$ to $2n$. Therefore $h\cdot (\bi, \bj)\in \Pi_A$ so $h\cdot \Pi_A = \Pi_A$.
    Next,  if we have $(\bi, \bj), (\bi', \bj') \in \Pi_A$ then we know that exists a permutation $h\in S_{2n}$ sending $(\bi_1, \bj_1), \ldots, (\bi_{2n}, \bj_{2n})$ to $(\bi'_1, \bj'_1), \ldots, (\bi'_{2n}, \bj'_{2n})$ in some order as for all $i,j$, each list has $a_{i,j}$ $(i, j)$ pairs. In particular, as $(\bi_k, \bj_k) =(\ell(\lambda)+1-\bi_{2n+1-k}, \ell(\mu)+1-\bj_{2n+1-k})$ and analogous for $(\bi', \bj')$, we can construct $h$ such that if $h$ sends $(i_k, j_k)$ to $(\bi'_t, \bj'_t)$ then $h$ sends $(i_{2n+1-k}, j_{2n+1-k})$ to $(\bi'_{2n+1-t}, \bj'_{2n+1-t})$. This means that $h\in H_n$ and gives $h\cdot (\bi, \bj) = (\bi', \bj')$ as needed.
\end{proof}
Because of the above theorem, every matrix $A\in \HMat_{\lambda, \mu}$ corresponds to a homomorphism. In particular we have the following.
\begin{definition}
    We define $\xi_A \in \Hom_{H_n}(\mathbb{F}[I_\mu], \mathbb{F}[I_\lambda])$ as the homomorphism sending $2n$-tuple $\bj$ to $\sum\limits_{(\bi, \bj)\in \Pi_A}\bi$, and $E_A$ as the corresponding $|I_\lambda| \times |I_\mu|$ matrix for $\xi_A$.
\end{definition}
\begin{remark}
    Instead of indexing the coefficients of $E_A$ by $\{1, \ldots, |I_\lambda|\}\times \{1, \ldots, |I_\mu|\}$, we will simply use the $2n$-tuples of $I_\lambda, I_\mu$. By Equation~\ref{basiscross} and Theorem~\ref{horbits}, the set of $\xi_A$'s form a basis of $\Hom_{H_n}(\mathbb{F}[I_\mu], \mathbb{F}[I_\lambda])$. 
\end{remark}
To determine how the composition of homomorphisms can be represented, we have the following theorem which gives us a precise rule.

\begin{theorem} \label{hschurrule}
For hypercompositions $\lambda,\mu,\nu$ of $H_n$,
$A \in \HMat_{\lambda,\mu}, B \in \HMat_{\mu,\nu}$ we have
\begin{equation}\label{hschurs}
\xi_A \circ \xi_B = 
\sum_{C \in \HMat_{\lambda,\nu}} Z\left(A,B,C\right)
\xi_C
\end{equation}
where
\begin{equation}\label{Z}
Z\left(A,B,C\right) \coloneqq \#\!\left\{\bj \:\big|\:(\bi,\bj) \in \Pi_A\text{ and } (\bj,\bk) \in \Pi_B
\right\}
\end{equation}
for $(\bi,\bk)$ a fixed element of $\Pi_C$.
\end{theorem}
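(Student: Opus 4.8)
The plan is to compute the composite $\xi_A\circ\xi_B$ directly on a basis vector of $\mathbb{F}[I_\nu]$ and then read off its coordinates against the basis $\{\xi_C\mid C\in\HMat_{\lambda,\nu}\}$ of $\Hom_{H_n}(\mathbb{F}[I_\nu],\mathbb{F}[I_\lambda])$. Fix $\bk\in I_\nu$. Unwinding the definitions of $\xi_A$ and $\xi_B$ and using linearity,
\[
(\xi_A\circ\xi_B)(\bk)=\xi_A\Big(\sum_{(\bj,\bk)\in\Pi_B}\bj\Big)=\sum_{(\bj,\bk)\in\Pi_B}\ \sum_{(\bi,\bj)\in\Pi_A}\bi=\sum_{\bi\in I_\lambda}\#\{\bj\mid (\bi,\bj)\in\Pi_A,\ (\bj,\bk)\in\Pi_B\}\,\bi.
\]
Thus the coefficient of $\bi$ in $(\xi_A\circ\xi_B)(\bk)$ is exactly the count appearing in the definition of $Z$, evaluated at the pair $(\bi,\bk)$.

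Next I would invoke Theorem~\ref{horbits}: the sets $\{\Pi_C\mid C\in\HMat_{\lambda,\nu}\}$ are precisely the $H_n$-orbits on $I_\lambda\times I_\nu$, so they partition $I_\lambda\times I_\nu$. Hence every pair $(\bi,\bk)$ lies in $\Pi_C$ for exactly one $C$, and by the definition of $\xi_C$ the coefficient of $\bi$ in $\xi_C(\bk)$ equals $1$ if $(\bi,\bk)\in\Pi_C$ and $0$ otherwise. Writing $\xi_A\circ\xi_B=\sum_C c_C\,\xi_C$ and comparing the coefficient of a fixed $\bi$ in the image of $\bk$, only the unique $C$ with $(\bi,\bk)\in\Pi_C$ contributes on the right-hand side, so $c_C=\#\{\bj\mid (\bi,\bj)\in\Pi_A,\ (\bj,\bk)\in\Pi_B\}$, which is the quantity $Z(A,B,C)$ computed at this particular representative $(\bi,\bk)\in\Pi_C$.

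The one point that genuinely requires an argument — and which I expect to be the main obstacle — is the well-definedness of $Z(A,B,C)$, namely that the count does not depend on the chosen representative $(\bi,\bk)\in\Pi_C$. I would establish this from the $H_n$-invariance of $\Pi_A$ and $\Pi_B$ supplied by Theorem~\ref{horbits}: if $(\bi,\bk),(\bi',\bk')\in\Pi_C$ then they lie in the same orbit, so $h\cdot(\bi,\bk)=(\bi',\bk')$ for some $h\in H_n$, and the assignment $\bj\mapsto h\cdot\bj$ then restricts to a bijection between $\{\bj\mid(\bi,\bj)\in\Pi_A,\ (\bj,\bk)\in\Pi_B\}$ and the analogous set for $(\bi',\bk')$, since $h\cdot\Pi_A=\Pi_A$ and $h\cdot\Pi_B=\Pi_B$. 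This forces the well-defined coefficient $c_C$ to agree with $Z(A,B,C)$ for every representative, giving Equation~\ref{hschurs}. I anticipate the entire argument to parallel the symmetric-group composition rule in~\cite{Brundan}, the only extra bookkeeping being that the orbits and counts must respect the $180^\circ$ rotational symmetry built into $\HMat_{\lambda,\mu}$ and the antisymmetry condition defining $I_\lambda$; this is handled automatically because $H_n\subset S_{2n}$ is exactly the subgroup preserving that symmetry, so the $S_{2n}$-level matching used in the proof of Theorem~\ref{horbits} can always be taken $H_n$-equivariant.
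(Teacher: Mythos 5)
Your proposal is correct and follows essentially the same route as the paper: the paper phrases the computation as matrix multiplication of the indicator matrices $E_A$, $E_B$ using Iverson brackets, which is identical to your direct evaluation of the composite on a basis vector of $\mathbb{F}[I_\nu]$, and both arguments then use the fact that the sets $\Pi_C$ partition $I_\lambda\times I_\nu$ so that exactly one $\xi_C$ picks up each coefficient. Your explicit verification that $Z(A,B,C)$ is independent of the chosen representative $(\bi,\bk)\in\Pi_C$, via the $H_n$-invariance of $\Pi_A$ and $\Pi_B$, is a point the paper leaves implicit, and it is a worthwhile addition.
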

\begin{proof}
First of all, as $\xi_A \circ \xi_B \in \Hom_{H_n}(\mathbb{F}[I_\nu], \mathbb{F}[I_\lambda])$, it is expressible as a linear combination of $\xi_C$'s for $C \in \HMat_{\lambda,\nu}$. As for the needed coefficients of such linear combination, observe that by standard matrix multiplication, the coefficients $c_{\bi, \bk}$ of $E_A\cdot E_B$ can be written as \[c_{\bi,\bk} = \sum_{\bj}a_{\bi,\bj}\cdot b_{\bj,\bk}.\]
For every $(\bi, \bk)$, there exists exactly one matrix $C\in \HMat_{\lambda,\nu}$ such that $(\bi, \bk) \in \Pi_C$. In Iverson bracket notation we have $a_{\bi,\bj} = [(\bi, \bj)\in \Pi_A]$ and $b_{\bj,\bk} = [(\bj, \bk)\in \Pi_B]$\footnote{For a statement P we have $[\text{P}] =\begin{cases} 
      1 & \text{if P} \\
      0 & \text{otherwise.} 
   \end{cases}.$} so
\[c_{\bi,\bk} = \sum_{\bj}[(\bi, \bj)\in \Pi_A]\cdot [(\bj, \bk)\in \Pi_B] = \sum_{\bj}[(\bi, \bj)\in \Pi_A, (\bj, \bk)\in \Pi_B] = Z\left(A,B,C\right).\]
 In particular $E_C$ has a coefficient of $1$ in the $\bi$-th row, $\bk$-th column, while all other $E_D$ for $D\in \HMat_{\lambda,\nu}, D\neq C$ have a coefficient of $0$. Because of this analysis we have
\[E_A \cdot E_B  = 
\sum_{C \in \HMat_{\lambda,\nu}} Z\left(A,B,C\right)
E_C,\]
\[\xi_A \circ \xi_B = 
\sum_{C \in \HMat_{\lambda,\nu}} Z\left(A,B,C\right)
\xi_C.\]
\end{proof}
We also have the following lemma akin to~\cite[Lemma 4.1]{Brundan} but for the hyperoctahedral group.
\begin{lemma}\label{htricky}
Suppose that $(\bi,\bj) \in \Pi_A$ and $(\bj,\bk) \in \Pi_B$ satisfy \[\Stab_{H_n}(\bi) \cap \Stab_{H_n}(\bk) = \Stab_{H_n}(\bj).\]
Then $Z\left(A,B,C\right) = 1$ if $(\bi,\bk) \in \Pi_C$, and $Z\left(A,B,C\right) = 0$ otherwise.
\end{lemma}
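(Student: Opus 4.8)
The plan is to reduce the entire statement to a single counting problem and then settle it with an elementary group-theoretic argument. By its definition in Equation~\ref{Z}, the number $Z(A,B,C)$ is exactly $\#\{\bj' \mid (\bi,\bj') \in \Pi_A \text{ and } (\bj',\bk) \in \Pi_B\}$ evaluated at a representative $(\bi,\bk) \in \Pi_C$, which (as in the proof of Theorem~\ref{hschurrule}) is the matrix entry $c_{\bi,\bk}$ of $E_A \cdot E_B$ and hence independent of the chosen representative. Since the orbits $\{\Pi_C\}_{C \in \HMat_{\lambda,\nu}}$ partition $I_\lambda \times I_\nu$ by Theorem~\ref{horbits}, the prescribed pair $(\bi,\bk)$ lies in $\Pi_C$ for one and only one $C$; this immediately yields the ``$Z(A,B,C)=0$ otherwise'' half of the statement, so it remains to show that for that unique $C$ the count equals $1$.

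First I would rewrite the two fibers as stabilizer-orbits of $\bj$. Because $\Pi_A$ is a single $H_n$-orbit containing $(\bi,\bj)$ and $H_n$ acts diagonally, an element of $\Pi_A$ has first coordinate $\bi$ exactly when it has the form $(\bi, h\bj)$ with $h\bi = \bi$; hence $\{\bj' \mid (\bi,\bj') \in \Pi_A\} = \Stab_{H_n}(\bi)\cdot \bj$. The identical argument applied to $\Pi_B$ through its second coordinate gives $\{\bj' \mid (\bj',\bk) \in \Pi_B\} = \Stab_{H_n}(\bk)\cdot \bj$. Thus the count we want is the cardinality of $(\Stab_{H_n}(\bi)\cdot \bj) \cap (\Stab_{H_n}(\bk)\cdot \bj)$, and since $\bj$ visibly lies in this intersection the count is at least $1$.

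The heart of the argument is to show this intersection is exactly $\{\bj\}$, and this is where the hypothesis $\Stab_{H_n}(\bi) \cap \Stab_{H_n}(\bk) = \Stab_{H_n}(\bj)$ enters. Suppose $\bj' = g_1 \bj = g_2 \bj$ with $g_1 \in \Stab_{H_n}(\bi)$ and $g_2 \in \Stab_{H_n}(\bk)$. Then $g_2^{-1}g_1 \in \Stab_{H_n}(\bj) = \Stab_{H_n}(\bi) \cap \Stab_{H_n}(\bk) \subseteq \Stab_{H_n}(\bk)$, so $g_1 = g_2(g_2^{-1}g_1) \in \Stab_{H_n}(\bk)$; combined with $g_1 \in \Stab_{H_n}(\bi)$ this gives $g_1 \in \Stab_{H_n}(\bi)\cap \Stab_{H_n}(\bk) = \Stab_{H_n}(\bj)$, whence $\bj' = g_1\bj = \bj$. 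This forces the intersection to be the singleton $\{\bj\}$, so the count is $1$ and $Z(A,B,C)=1$ for the unique $C$ with $(\bi,\bk) \in \Pi_C$.

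I expect the only genuine obstacle to be the second step, namely the clean identification of the projection fibers with the stabilizer-orbits $\Stab_{H_n}(\bi)\cdot\bj$ and $\Stab_{H_n}(\bk)\cdot\bj$; this depends essentially on each $\Pi_A$ being a \emph{single} orbit (Theorem~\ref{horbits}) and on keeping the left diagonal action of $H_n$ straight, whereas the remaining group theory is routine. It is also worth checking that both inclusions of the hypothesized equality are truly used (one to place $g_2^{-1}g_1$ in $\Stab_{H_n}(\bk)$, the other to place $g_1$ in $\Stab_{H_n}(\bj)$), since $\bj$ alone determines neither $\bi$ nor $\bk$ and so neither inclusion is automatic.
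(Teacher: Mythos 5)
Your treatment of the case $(\bi,\bk)\in\Pi_C$ is correct and in fact somewhat cleaner than the paper's own second paragraph: identifying the two fibers with the stabilizer-orbits $\Stab_{H_n}(\bi)\cdot\bj$ and $\Stab_{H_n}(\bk)\cdot\bj$, and then running the $g_2^{-1}g_1$ argument to show the intersection is exactly $\{\bj\}$, is precisely right, and you correctly isolate which inclusion of the hypothesis is used at which step.

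However, your claim that the ``$Z(A,B,C)=0$ otherwise'' half is \emph{immediate} from the fact that $(\bi,\bk)$ lies in only one orbit is a genuine gap. By Equation~\ref{Z}, $Z(A,B,C)$ is computed at a representative $(\bi',\bk')$ of $\Pi_C$, not at your fixed pair $(\bi,\bk)$; knowing $(\bi,\bk)\notin\Pi_C$ says nothing a priori about whether some \emph{other} pair $(\bi',\bk')\in\Pi_C$ admits a $\bj'$ with $(\bi',\bj')\in\Pi_A$ and $(\bj',\bk')\in\Pi_B$. Indeed, for general $A,B$ the product $\xi_A\circ\xi_B$ is supported on many orbits (the paper's computation $E_3\cdot E_3=4E_1+4E_2+2E_3$ is exactly such an instance), so the vanishing for all $C$ with $(\bi,\bk)\notin\Pi_C$ is where the stabilizer hypothesis does half of its work, and it is the content of the first paragraph of the paper's proof. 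The missing argument runs: if such a $\bj'$ exists for some $(\bi',\bk')\in\Pi_C$, choose $g$ with $g\cdot\bj'=\bj$; then $(g\cdot\bi',\bj)\in\Pi_A$ and $(\bj,g\cdot\bk')\in\Pi_B$, so by your own fiber description applied in the other coordinate, $g\cdot\bi'=h_1\cdot\bi$ and $g\cdot\bk'=h_2\cdot\bk$ for some $h_1,h_2\in\Stab_{H_n}(\bj)$; the inclusion $\Stab_{H_n}(\bj)\subseteq\Stab_{H_n}(\bi)\cap\Stab_{H_n}(\bk)$ then gives $h_1\cdot\bi=\bi$ and $h_2\cdot\bk=\bk$, hence $g\cdot(\bi',\bk')=(\bi,\bk)$ and $(\bi,\bk)\in\Pi_C$ after all. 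With that paragraph supplied, your proof is complete.
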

\begin{proof}
Consider a $(\bi',\bk') \in \Pi_C$. To compute $Z\left(A,B,C\right)$, we need to find the number of 
$\bj'$ such that $(\bi',\bj') \in \Pi_A$ and
$(\bj',\bk') \in \Pi_B$.
In other words, we need to find the number of $\bj'$
with $(\bi',\bj') \sim (\bi,\bj)$ and $(\bj',\bk') \sim
(\bj,\bk)$. If a $\bj'$ exists, we can find $g \in H_n$ such that $g\cdot \bj'=
\bj$, so $(g\cdot \bi', \bj) \sim (\bi,\bj)$ and $(\bj,
g \cdot \bk') \sim (\bj,\bk)$. This means that there exists $h \in \Stab_{H_n}(\bj)$ such that
$g\cdot \bi' = h\cdot \bi$ and $g\cdot \bk' = h \cdot \bk$. However, as
$\Stab_{H_n}(\bj) \subseteq \Stab_{H_n}(\bi) \cap \Stab_{H_n}(\bk)$ we get $g\cdot \bi' = \bi$ and $g\cdot \bk' = \bk$, so
$(\bi,\bk) \in \Pi_C$.

Finally if we have $(\bi,\bk) \in \Pi_C$ then we can assume $(\bi',\bk')= (\bi,\bk)$, 
and $Z\left(A,B,C\right)$ is the number of $\bj'$ such that
$(\bi,\bj') \sim (\bi,\bj)$ and $(\bj',\bk) \sim (\bj,\bk)$. Since $\bi$ and $\bk$ are fixed, we see that $\bj'$ can be written in the form $g\cdot \bj$ for $g \in
\Stab_{H_n}(\bi) \cap \Stab_{H_n}(\bk)$. However,  $\Stab_{H_n}(\bi) \cap \Stab_{H_n}(\bk) \subseteq
\Stab_{H_n}(\bj)$ implies $\bj' = \bj$ and $Z\left(A,B,C\right) = 1$.
\end{proof}

As an example, consider endomorphisms of $H_2/H_{1,2,1}$ that commute with $H_2$. For our crossing diagrams of Section~\ref{H2Reps}, labelling these in order from $1$ to $4$ we can find the three orbits by inspection.
\begin{center}
    Orbit 1: $(1,1), (2,2), (3,3), (4,4)$.
    \\*
    Orbit 2: $(1,4), (2,3), (3,2), (4,1)$.
    \\*
    Orbit 3: Everything else.
\end{center}
These give rise to the following three matrices which are a basis of $\End_{H_2}(\mathbb{F}[H_2/H_{1,2,1}])$:
\begin{center}
\[E_1 = \begin{bmatrix}
1 & 0 & 0 & 0  \\
0 & 1 & 0 & 0  \\
0 & 0 & 1 & 0  \\
0 & 0 & 0 & 1  
\end{bmatrix},\]
\[E_2 = \begin{bmatrix}
0 & 0 & 0 & 1  \\
0 & 0 & 1 & 0  \\
0 & 1 & 0 & 0  \\
1 & 0 & 0 & 0  
\end{bmatrix},\]
\[E_3 = \begin{bmatrix}
0 & 1 & 1 & 0  \\
1 & 0 & 0 & 1  \\
1 & 0 & 0 & 1  \\
0 & 1 & 1 & 0 
\end{bmatrix}.\]
\end{center}
These correspond to $\xi_{\left(\begin{smallmatrix}1&0&0\\0&2&0\\0&0&1\end{smallmatrix}\right)}$, $\xi_{\left(\begin{smallmatrix}0&0&1\\0&2&0\\1&0&0\end{smallmatrix}\right)}$, and  $\xi_{\left(\begin{smallmatrix}0&1&0\\1&0&1\\0&1&0\end{smallmatrix}\right)}$ respectively.

From the hyperoctahedral Schur algebra we can define a more general category akin to the Schur category.
\begin{definition}
    The \emph{hyperoctahedral Schur category}, denoted as $\HSchur$, is the category with objects: hypercompositions and morphisms: 
    \[
    \Hom_{\HSchur}(\mu, \lambda):=\begin{cases}\Hom_{H_n}(M_{\mu}, M_\lambda) \quad \text{if} \quad \sum \lambda_i = \sum \mu_i\\
    0 \quad \text{else}.
    \end{cases}.
    \]
\end{definition}
\subsection{Definition of Hyperoctahedral Web Category}\label{sec:hgendef}
As with how the polynomial web category utilized web diagrams, we can likewises define the hyperoctahedral web category.
\begin{definition}
The \emph{hyperoctahedral web category}, denoted as $\HWeb$, has objects as hypercompositions and morphisms from hypercompositions $\mu$ to $\lambda$ as web diagrams from $\mu$ to $\lambda$. We have the following web diagrams as generators:
\[
\begin{tikzpicture}
[baseline = -.5mm]
	\draw[-,line width=1pt] (0.28,-.3) to (0.08,0.04);
	\draw[-,line width=1pt] (-0.12,-.3) to (0.08,0.04);
	\draw[-,line width=2pt] (0.08,.4) to (0.08,0);
        \node at (-0.22,-.4) {$\scriptstyle a$};
        \node at (0.35,-.4) {$\scriptstyle b$};
\end{tikzpicture} : (a,b,b,a) \rightarrow (a+b,a+b),
\begin{tikzpicture}[baseline = -.5mm]
 \draw[-,line width=1pt,dashed,lightblue] (0.08,-.75) to (0.08,0.7);
	\draw[-,line width=1pt] (0.28,-.3) to (0.08,0.04);
	\draw[-,line width=1pt] (-0.12,-.3) to (0.08,0.04);
    \draw[-,line width=1pt] (0.08,-.3) to (0.08,0.04);
    \draw[-,line width=2pt] (0.08,.4) to (0.08,0);
        \node at (-0.22,-.4) {$\scriptstyle a$};
        \node at (0.07,-.4) {$\scriptstyle 2b$};
        \node at (0.35,-.4) {$\scriptstyle a$};
\end{tikzpicture} 
:(a,2b,a) \rightarrow (2a+2b),
\]
\[
\begin{tikzpicture}[baseline = -.5mm]
	\draw[-,line width=2pt] (0.08,-.3) to (0.08,0.04);
	\draw[-,line width=1pt] (0.28,.4) to (0.08,0);
	\draw[-,line width=1pt] (-0.12,.4) to (0.08,0);
        \node at (-0.22,.5) {$\scriptstyle a$};
        \node at (0.36,.5) {$\scriptstyle b$};
\end{tikzpicture}
:(a+b, a+b)\rightarrow (a,b, b, a), 
\begin{tikzpicture}[baseline = -.5mm]
   \draw[-,line width=1pt,dashed,lightblue] (0.08,-.7) to (0.08,0.75);
	\draw[-,line width=2pt] (0.08,-.3) to (0.08,0.04);
	\draw[-,line width=1pt] (0.28,.4) to (0.08,0);
	\draw[-,line width=1pt] (-0.12,.4) to (0.08,0);
 \draw[-,line width=1pt] (0.08,.4) to   (0.08,0);
        \node at (-0.22,.5) {$\scriptstyle a$};
        \node at (0.36,.5) {$\scriptstyle a$};
         \node at (0.07,.5) {$\scriptstyle 2b$};
\end{tikzpicture}
:(2a+2b)\rightarrow (a,2b,a),
\]
\[
\begin{tikzpicture}[baseline=-.5mm]
	\draw[-,thick] (-0.3,-.3) to (.3,.4);
	\draw[-,thick] (0.3,-.3) to (-.3,.4);
        \node at (0.3,-.4) {$\scriptstyle b$};
        \node at (-0.3,-.4) {$\scriptstyle a$};
\end{tikzpicture}
:(a,b, b, a) \rightarrow (b,a, a, b),
\begin{tikzpicture}[baseline=-.5mm]
  \draw[-,line width=1pt,dashed,lightblue] (0,-.5) to (0,0.6);
	\draw[-,thick] (-0.3,-.3) to (.3,.4);
	\draw[-,thick] (0.3,-.3) to (-.3,.4);
    \draw[-,thick] (0,-.3) to (0,.4);
        \node at (0.3,-.4) {$\scriptstyle a$};
        \node at (0,-.4) {$\scriptstyle 2b$};
        \node at (-0.3,-.4) {$\scriptstyle a$};
\end{tikzpicture}
:(a,2b, a) \rightarrow (a,2b, a).
\]
We also have the following relations, which from here on out are only presented up to horizontal and vertical symmetry:
\begin{align}
\label{hsplitchoice}
\begin{tikzpicture}[anchorbase]
	\draw[-,thick] (0.35,-.3) to (0.08,0.14);
	\draw[-,thick] (0.1,-.3) to (-0.04,-0.06);
	\draw[-,line width=1pt] (0.085,.14) to (-0.035,-0.06);
	\draw[-,thick] (-0.2,-.3) to (0.07,0.14);
	\draw[-,line width=2pt] (0.08,.45) to (0.08,.1);
        \node at (0.45,-.41) {$\scriptstyle c$};
        \node at (0.07,-.4) {$\scriptstyle b$};
        \node at (-0.28,-.41) {$\scriptstyle a$};
\end{tikzpicture}
=
\begin{tikzpicture}[anchorbase]
	\draw[-,thick] (0.36,-.3) to (0.09,0.14);
	\draw[-,thick] (0.06,-.3) to (0.2,-.05);
	\draw[-,line width=1pt] (0.07,.14) to (0.19,-.06);
	\draw[-,thick] (-0.19,-.3) to (0.08,0.14);
	\draw[-,line width=2pt] (0.08,.45) to (0.08,.1);
        \node at (0.45,-.41) {$\scriptstyle c$};
        \node at (0.07,-.4) {$\scriptstyle b$};
        \node at (-0.28,-.41) {$\scriptstyle a$};
\end{tikzpicture},\qquad \begin{tikzpicture}[anchorbase]
    \draw[-,line width=1pt,dashed,lightblue] (0,-.7) to (0,.7);
	\draw[-,thick] (-0.16,-.3) to (-0.22,-0.21);
	\draw[-,line width=1pt] (0,.14) to (-0.22,-0.21);
	\draw[-,thick] (-0.28,-.3) to (0,0.14);
 \draw[-,thick] (0.16,-.3) to (0.22,-0.21);
	\draw[-,line width=1pt] (0,.14) to (0.22,-0.21);
	\draw[-,thick] (0.28,-.3) to (0,0.14);
	\draw[-,line width=2pt] (0,.45) to (0,.1);
    \draw[-,line width=1pt] (0,.1) to  (0,-.3);
        \node at (0,-.41) {$\scriptstyle c$};
        \node at (-0.15,-.4) {$\scriptstyle b$};
        \node at (-0.33,-.41) {$\scriptstyle a$};
          \node at (0.15,-.4) {$\scriptstyle b$};
        \node at (0.33,-.41) {$\scriptstyle a$};
\end{tikzpicture}
 = \begin{tikzpicture}[anchorbase]
    \draw[-,line width=1pt,dashed,lightblue] (0,-.7) to (0,.7);
	\draw[-,thick] (-0.16,-.3) to (0,-0.06);
	\draw[-,line width=1pt] (0,.14) to (-0.22,-0.21);
	\draw[-,thick] (-0.28,-.3) to (0,0.14);
    \draw[-,thick] (0.16,-.3) to (0,-0.06);
	\draw[-,line width=1pt] (0,.14) to (0.22,-0.21);
	\draw[-,thick] (0.28,-.3) to (0,0.14);
	\draw[-,line width=2pt] (0,.45) to (0,.1);
    \draw[-,line width=1pt] (0,.1) to  (0,-.3);
        \node at (0,-.41) {$\scriptstyle c$};
        \node at (-0.15,-.4) {$\scriptstyle b$};
        \node at (-0.33,-.41) {$\scriptstyle a$};
          \node at (0.15,-.4) {$\scriptstyle b$};
        \node at (0.33,-.41) {$\scriptstyle a$};
\end{tikzpicture}
,\end{align}
\begin{align}
\label{htrivial}
\begin{tikzpicture}[anchorbase,scale=.8]
	\draw[-,line width=2pt] (0.08,-.8) to (0.08,-.5);
	\draw[-,line width=2pt] (0.08,.3) to (0.08,.6);
\draw[-,thick] (0.1,-.51) to [out=45,in=-45] (0.1,.31);
\draw[-,thick] (0.06,-.51) to [out=135,in=-135] (0.06,.31);
        \node at (-.33,-.05) {$\scriptstyle a$};
        \node at (.45,-.05) {$\scriptstyle b$};
\end{tikzpicture}
&= 
\binom{a+b}{a}\:\:
\begin{tikzpicture}[anchorbase,scale=.8]
	\draw[-,line width=2pt] (0.08,-.8) to (0.08,.6);
        \node at (.62,-.05) {$\scriptstyle a+b$};
\end{tikzpicture},\begin{tikzpicture}[anchorbase,scale=.8]
 \draw[-,line width=1pt,dashed,lightblue] (0.08,-1) to (0.08,1);
	\draw[-,line width=2pt] (0.08,-.8) to (0.08,-.5);
	\draw[-,line width=2pt] (0.08,.3) to (0.08,.6);
 \draw[-,line width=1pt] (0.08,-.5) to (0.08,.3);
\draw[-,thick] (0.1,-.51) to [out=0,in=0] (0.1,.31);
\draw[-,thick] (0.06,-.51) to [out=180,in=-180] (0.06,.31);
        \node at (-.33,-.05) {$\scriptstyle a$};
        \node at (-0.05,-.05) {$\scaleto{2b}{3pt}$};
        \node at (.45,-.05) {$\scriptstyle a$};
\end{tikzpicture}
=2^a \binom{a+b}{a}\:\:
\begin{tikzpicture}[anchorbase,scale=.8]
   \draw[-,line width=1pt,dashed,lightblue] (0.08,-1) to (0.08,1);
	\draw[-,line width=2pt] (0.08,-.8) to (0.08,.6);
        \node at (.65,-.05) {$\scriptstyle 2a+2b$};
\end{tikzpicture}
,\end{align}
\label{mergesplit}
\begin{align}
\begin{tikzpicture}[anchorbase,scale=1]
	\draw[-,line width=1.2pt] (0,0) to (.275,.3) to (.275,.7) to (0,1);
	\draw[-,line width=1.2pt] (.6,0) to (.315,.3) to (.315,.7) to (.6,1);
        \node at (0,1.13) {$\scriptstyle b$};
        \node at (0.63,1.13) {$\scriptstyle d$};
        \node at (0,-.1) {$\scriptstyle a$};
        \node at (0.63,-.1) {$\scriptstyle c$};
\end{tikzpicture}
&=
\sum_{\substack{0 \leq s \leq \min(a,b)\\0 \leq t \leq \min(c,d)\\t-s=d-a}}
\begin{tikzpicture}[anchorbase,scale=1]
	\draw[-,thick] (0.58,0) to (0.58,.2) to (.02,.8) to (.02,1);
	\draw[-,thick] (0.02,0) to (0.02,.2) to (.58,.8) to (.58,1);
	\draw[-,thin] (0,0) to (0,1);
	\draw[-,thin] (0.6,0) to (0.6,1);
        \node at (0,1.13) {$\scriptstyle b$};
        \node at (0.6,1.13) {$\scriptstyle d$};
        \node at (0,-.1) {$\scriptstyle a$};
        \node at (0.6,-.1) {$\scriptstyle c$};
        \node at (-0.1,.5) {$\scriptstyle s$};
        \node at (0.77,.5) {$\scriptstyle t$};
\end{tikzpicture}, \label{hmergesplit}
\begin{tikzpicture}[anchorbase,scale=1]
  \draw[-,line width=1pt,dashed,lightblue] (0.3,-.5) to (0.3,1.5);
	\draw[-,line width=1.2pt] (0,0) to (.275,.3) to (.275,.7) to (0,1);
     \draw[-,line width=1.2pt] (.3,0) to (.295,.3) to (.295,.7) to (.3,1);
	\draw[-,line width=1.2pt] (.6,0) to (.315,.3) to (.315,.7) to (.6,1);
        \node at (0,1.13) {$\scriptstyle a$};
         \node at (0.31,1.13) {$\scriptstyle 2b$};
        \node at (0.63,1.13) {$\scriptstyle a$};
        \node at (0,-.1) {$\scriptstyle a$};
          \node at (0.31,-.1) {$\scriptstyle 2b$};
        \node at (0.63,-.1) {$\scriptstyle a$};
\end{tikzpicture}
= \sum_{\substack{ 0\leq s\leq a }} \sum_{\substack{ 0\leq t\leq \min(b, a-s)}}
\begin{tikzpicture}[anchorbase,scale=1]
 \draw[-,line width=1pt,dashed,lightblue] (0.3,-.5) to (0.3,1.5);
	\draw[-,thick] (0.6,0) to (0.6,.2) to (0,.8) to (0,1);
	\draw[-,thick] (0,0) to (0,.2) to (.6,.8) to (.6,1);
	\draw[-,thin] (0,0) to (0,1);
    \draw[-,thin] (0,0.2) to (0.3,.8);
     \draw[-,thin] (0.6,0.2) to (0.3,.8);
	\draw[-,thin] (0.6,0) to (0.6,1);
 \draw[-,thick] (0.3,0) to  (0.3,1);
 \draw[-,thin] (0,0.8) to (0.3,0.2);
  \draw[-,thin] (0.6,.8) to (0.3,0.2);
        \node at (0,1.13) {$\scriptstyle a$};
           \node at (0.3,1.13) {$\scriptstyle 2b$};
             \node at (0.3,-.1) {$\scriptstyle 2b$};
        \node at (0.6,1.13) {$\scriptstyle a$};
        \node at (0,-.1) {$\scriptstyle a$};
        \node at (0.6,-.1) {$\scriptstyle a$};
        \node at (-0.1,.5) {$\scriptstyle s$};
        \node at (0.16,.8) {$\scriptstyle t$};
         \node at (0.46,.8) {$\scriptstyle t$};
        \node at (0.7,.5) {$\scriptstyle s$};
\end{tikzpicture},
\end{align}
\end{definition}
\begin{align}\label{commute}
\begin{tikzpicture}[anchorbase]
\draw[-,thick] (0.2,0) to (.2,0.2);
 \draw[draw=black] (0,0.2) rectangle ++(0.4,0.4);
 \draw[-,thick] (0.2,.6) to (.2,1.2);
 \draw[-,thick] (0.8,0) to (.8,0.6);
 \draw[draw=black] (0.6,0.6) rectangle ++(0.4,0.4);
 \draw[-,thick] (0.8,1) to (.8,1.2);
\end{tikzpicture} = \begin{tikzpicture}[anchorbase]
\draw[-,thick] (0.2,0) to (.2,0.6);
 \draw[draw=black] (0,0.6) rectangle ++(0.4,0.4);
 \draw[-,thick] (0.2,1) to (.2,1.2);
 \draw[-,thick] (0.8,0) to (.8,0.2);
 \draw[draw=black] (0.6,0.2) rectangle ++(0.4,0.4);
 \draw[-,thick] (0.8,.6) to (.8,1.2);
\end{tikzpicture}
,
\end{align}
\begin{align}\label{hcommute}
\begin{tikzpicture}[anchorbase]
\draw[-,line width=1pt,dashed,lightblue] (0.8,-.3) to (0.8,1.5);
\draw[-,thick] (0.2,0) to (.2,0.2);
 \draw[draw=black] (0,0.2) rectangle ++(0.4,0.4);
 \draw[-,thick] (0.2,.6) to (.2,1.2);
 \draw[-,thick] (0.8,0) to (.8,0.6);
 \draw[draw=black] (0.6,0.6) rectangle ++(0.4,0.4);
 \draw[-,thick] (0.8,1) to (.8,1.2);
 \draw[-,thick] (1.4,0) to (1.4,0.2);
 \draw[draw=black] (1.2,0.2) rectangle ++(0.4,0.4);
 \draw[-,thick] (1.4,.6) to (1.4,1.2);
\end{tikzpicture} = \begin{tikzpicture}[anchorbase]
\draw[-,line width=1pt,dashed,lightblue] (0.8,-.3) to (0.8,1.5);
\draw[-,thick] (0.2,0) to (.2,0.6);
 \draw[draw=black] (0,0.6) rectangle ++(0.4,0.4);
 \draw[-,thick] (0.2,1) to (.2,1.2);
 \draw[-,thick] (0.8,0) to (.8,0.2);
 \draw[draw=black] (0.6,0.2) rectangle ++(0.4,0.4);
 \draw[-,thick] (0.8,.6) to (.8,1.2);
 \draw[-,thick] (1.4,0) to (1.4,0.6);
 \draw[draw=black] (1.2,0.6) rectangle ++(0.4,0.4);
 \draw[-,thick] (1.4,1) to (1.4,1.2);
\end{tikzpicture}
.
\end{align}
where the boxes represent arbitrary generating morphisms. 
\begin{remark}
    The dashed lines in relations indicate that the thick strings lie on the vertical line of symmetry. Relations without a dashed line are assumed to have their symmetric copy on the other side of the line, hence the input and output hypercompositions for generators without a dashed line have twice as many elements as there are input and output strings depicted. 
    
    For example, using Equation~\ref{htrivial} to reduce  $(6,0,6)\times(6,0,6)$ and $(3,6,3)\times (3,6,3)$ diagrams, we have the following equalities:
    \begin{center} \begin{tikzpicture}[anchorbase,scale=.8]
\draw[-,line width=1pt,dashed,lightblue] (0.58,-1.1) to (0.58,.9);
\draw[-,line width=2pt] (0.08,-.8) to (0.08,-.5);
\draw[-,line width=2pt] (0.08,.3) to (0.08,.6);
\draw[-,line width=1pt] (0.08,-.51) to [out=135, in=225] (0.08,.31);
\draw[-,line width=1pt] (0.08,-.51) to [out=45, in=-45] (0.08,.31);
\draw[-,line width=2pt] (1.08,-.8) to (1.08,-.5);
\draw[-,line width=2pt] (1.08,.3) to (1.08,.6);
\draw[-,line width=1pt] (1.08,-.51) to [out=135, in=225] (1.08,.31);
\draw[-,line width=1pt] (1.08,-.51) to [out=45, in=-45] (1.08,.31);
\node at (-.24,.05) {$\scriptstyle 3$};
\node at (.39,.05) {$\scriptstyle 3$};
\node at (1.35,.05) {$\scriptstyle 3$};
\node at (.73,.05) {$\scriptstyle 3$};
\end{tikzpicture}
 $= {\binom{6}{3}}\cdot$ \begin{tikzpicture}[anchorbase,scale=.8]
 \draw[-,line width=1pt,dashed,lightblue] (0.58,-1.1) to (0.58,.9);
\draw[-,line width=2pt] (0.08,-.8) to (0.08,-.5);
\draw[-,line width=2pt] (0.08,.3) to (0.08,.6);
\draw[-,line width=1pt] (0.08,-.51) to  (0.08,.31);
\draw[-,line width=2pt] (1.08,-.8) to (1.08,-.5);
\draw[-,line width=2pt] (1.08,.3) to (1.08,.6);
\draw[-,line width=1pt] (1.08,-.51) to  (1.08,.31);
\node at (-.33,-.05) {$\scriptstyle 6$};
\node at (1.45,-.05) {$\scriptstyle 6$};
\end{tikzpicture}, \quad \quad \begin{tikzpicture}[anchorbase,scale=.8]
\draw[-,line width=1pt,dashed,lightblue] (0.58,-1.1) to (0.58,.9);
\draw[-,line width=2pt] (0.08,-.8) to (0.08,-.5);
\draw[-,line width=2pt] (0.08,.3) to (0.08,.6);
\draw[-,line width=1pt] (0.08,-.51) to  (0.08,.31);
\draw[-,line width=2pt] (0.58,-.8) to (0.58,-.5);
\draw[-,line width=2pt] (0.58,.3) to (0.58,.6);
\draw[-,line width=1pt] (0.58,-.51) to [out=135, in=225] (0.58,.31);
\draw[-,line width=1pt] (0.58,-.51) to [out=45, in=-45] (0.58,.31);
\draw[-,line width=2pt] (1.08,-.8) to (1.08,-.5);
\draw[-,line width=2pt] (1.08,.3) to (1.08,.6);
\draw[-,line width=1pt] (1.08,-.51) to (1.08,.31);
\node at (-.24,.05) {$\scriptstyle 3$};
\node at (.24,.05) {$\scriptstyle 3$};
\node at (1.35,.05) {$\scriptstyle 3$};
\node at (.88,.05) {$\scriptstyle 3$};
\end{tikzpicture}
 $= 2^3\cdot$ \begin{tikzpicture}[anchorbase,scale=.8]
 \draw[-,line width=1pt,dashed,lightblue] (0.58,-1.1) to (0.58,.9);
\draw[-,line width=2pt] (0.08,-.8) to (0.08,-.5);
\draw[-,line width=2pt] (0.08,.3) to (0.08,.6);
\draw[-,line width=1pt] (0.08,-.51) to  (0.08,.31);
\draw[-,line width=2pt] (0.58,-.8) to (0.58,-.5);
\draw[-,line width=2pt] (0.58,.3) to (0.58,.6);
\draw[-,line width=1pt] (0.58,-.51) to (0.58,.31);
\draw[-,line width=2pt] (1.08,-.8) to (1.08,-.5);
\draw[-,line width=2pt] (1.08,.3) to (1.08,.6);
\draw[-,line width=1pt] (1.08,-.51) to (1.08,.31);
\node at (-.24,.05) {$\scriptstyle 3$};
\node at (.45,.05) {$\scriptstyle 6$};
\node at (1.35,.05) {$\scriptstyle 3$};
\end{tikzpicture}
.
\end{center}
Importantly, while $H_n$ is a subgroup of $S_{2n}$, $\HWeb$ is not a subcategory of the polynomial web category. 
\end{remark}
\begin{remark}
From~\ref{commute} and~\ref{hcommute}, it follows that these relations hold when the boxes represent any possible web diagrams. These relations are called~\emph{rectilinear isotopies} and, in $\Web$, they implicitly followed from $\Web$ being a strict monoidal category. However $\HWeb$ is not a strict monoidal category as, in general, putting two diagrams side by side (as opposed to on top of one another) breaks the needed vertical line of symmetry. Nevertheless we will frequently use~\ref{commute} and~\ref{hcommute} without reference due to their simplicity.
\end{remark}

As with the work done in~\cite[Equation 4.24]{Brundan}, Equation~\ref{hsplitchoice} allows us to define general 
$n$-fold merges and splits off and on the axis. For example, a three-fold merge off the axis (which is used in Equation~\ref{hmergesplit})  and a five-fold merge on the axis have the below definitions:
\[ \begin{tikzpicture}[baseline = 0]
	\draw[-,thick] (0.36,-.3) to (.08,0.14);
	\draw[-,thick] (0.08,-.3) to (.08,0.14);
	\draw[-,thick] (-0.2,-.3) to (.08,0.14);
	\draw[-,line width=2pt] (0.08,.45) to (0.08,.1);
        \node at (0.36,-.41) {$\scriptstyle c$};
        \node at (0.08,-.4) {$\scriptstyle b$};
        \node at (-0.2,-.41) {$\scriptstyle a$};
\end{tikzpicture}
\vcentcolon =
\begin{tikzpicture}[baseline = 0]
	\draw[-,thick] (0.36,-.3) to (.08,0.14);
	\draw[-,thick] (0.08,-.3) to (-.06,-.08);
	\draw[-,thick] (-0.2,-.3) to (.08,0.14);
	\draw[-,line width=2pt] (0.08,.45) to (0.08,.1);
        \node at (0.36,-.41) {$\scriptstyle c$};
        \node at (0.08,-.4) {$\scriptstyle b$};
        \node at (-0.2,-.41) {$\scriptstyle a$};
\end{tikzpicture}
= \begin{tikzpicture}[baseline = 0]
	\draw[-,thick] (0.36,-.3) to (.08,0.14);
	\draw[-,thick] (0.08,-.3) to (.22,-.08);
	\draw[-,thick] (-0.2,-.3) to (.08,0.14);
	\draw[-,line width=2pt] (0.08,.45) to (0.08,.1);
        \node at (0.36,-.41) {$\scriptstyle c$};
        \node at (0.08,-.4) {$\scriptstyle b$};
        \node at (-0.2,-.41) {$\scriptstyle a$};
\end{tikzpicture}, \]
\[ \begin{tikzpicture}[baseline = 0]
\draw[-,line width=1pt,dashed,lightblue] (0.07,-.55) to (0.07,.7);
	\draw[-,thick] (0.56,-.3) to (.08,0.24);
	\draw[-,thick] (-0.4,-.3) to (.08,0.24);
  	\draw[-,thick] (-0.16,-.3) to (.08,0.24);
    \draw[-,thick] (0.32,-.3) to (.08,0.24);
   	\draw[-,thick] (0.08,-.3) to (.08,0.24);
	\draw[-,line width=2pt] (0.08,.45) to (0.08,.2);
        \node at (0.56,-.41) {$\scriptstyle a$};
        \node at (-0.40,-.41) {$\scriptstyle a$};
        \node at (-.16,-.4) {$\scriptstyle b$};
        \node at (.32,-.4) {$\scriptstyle b$};
        \node at (0.08,-.4) {$\scriptstyle 2c$};
\end{tikzpicture}
\vcentcolon =
\begin{tikzpicture}[baseline = 0]
	\draw[-,line width=1pt,dashed,lightblue] (0.07,-.55) to (0.07,.7);
	\draw[-,thick] (0.56,-.3) to (.08,0.24);
	\draw[-,thick] (-0.4,-.3) to (.08,0.24);
  	\draw[-,thick] (-0.16,-.3) to (-.28,-.165);
    \draw[-,thick] (0.32,-.3) to (.44,-.165);
   	\draw[-,thick] (0.08,-.3) to (.08,0.24);
	\draw[-,line width=2pt] (0.08,.45) to (0.08,.2);
        \node at (0.56,-.41) {$\scriptstyle a$};
        \node at (-0.40,-.41) {$\scriptstyle a$};
        \node at (-.16,-.4) {$\scriptstyle b$};
        \node at (.32,-.4) {$\scriptstyle b$};
        \node at (0.08,-.4) {$\scriptstyle 2c$};
\end{tikzpicture}
= \begin{tikzpicture}[baseline = 0]
	\draw[-,line width=1pt,dashed,lightblue] (0.07,-.55) to (0.07,.7);
	\draw[-,thick] (0.56,-.3) to (.08,0.24);
	\draw[-,thick] (-0.4,-.3) to (.08,0.24);
  	\draw[-,thick] (-0.16,-.3) to (.08,-.03);
    \draw[-,thick] (0.32,-.3) to (.08,-.03);
   	\draw[-,thick] (0.08,-.3) to (.08,0.24);
	\draw[-,line width=2pt] (0.08,.45) to (0.08,.2);
        \node at (0.56,-.41) {$\scriptstyle a$};
        \node at (-0.40,-.41) {$\scriptstyle a$};
        \node at (-.16,-.4) {$\scriptstyle b$};
        \node at (.32,-.4) {$\scriptstyle b$};
        \node at (0.08,-.4) {$\scriptstyle 2c$};
\end{tikzpicture}.\]
From our defining relations, we can prove a variety of other relations displayed below. Note that the defining relations off the vertical axis are identical to those for the polynomial web category, so the below equations hold as relations in $\HWeb$. For further details, see~\cite[Appendix A]{Brundan} for how the following were proven in the polynomial web category: 
\begin{align}
\label{switch}
\begin{tikzpicture}[anchorbase,scale=1]
	\draw[-,line width=1.2pt] (0,0) to (.6,1);
	\draw[-,line width=1.2pt] (0,1) to (.6,0);
        \node at (0,-.1) {$\scriptstyle a$};
        \node at (0.6,-0.1) {$\scriptstyle b$};
\end{tikzpicture}
&=\sum_{t=0}^{\min(a,b)}
(-1)^t
\begin{tikzpicture}[anchorbase,scale=1]
	\draw[-,thick] (0,0) to (0,1);
	\draw[-,thick] (0.015,0) to (0.015,.2) to (.57,.4) to (.57,.6)
        to (.015,.8) to (.015,1);
	\draw[-,line width=1.2pt] (0.6,0) to (0.6,1);
        \node at (0.6,-.1) {$\scriptstyle b$};
        \node at (0,-.1) {$\scriptstyle a$};
        \node at (-0.1,.5) {$\scriptstyle t$};
\end{tikzpicture}
, \end{align}
\begin{align}
\label{swallows}
\begin{tikzpicture}[anchorbase,scale=.7]
	\draw[-,line width=2pt] (0.08,.3) to (0.08,.5);
\draw[-,thick] (-.2,-.8) to [out=45,in=-45] (0.1,.31);
\draw[-,thick] (.36,-.8) to [out=135,in=-135] (0.06,.31);
        \node at (-.3,-.95) {$\scriptstyle a$};
        \node at (.45,-.95) {$\scriptstyle b$};
\end{tikzpicture}
=\begin{tikzpicture}[anchorbase,scale=.7]
	\draw[-,line width=2pt] (0.08,.1) to (0.08,.5);
\draw[-,thick] (.46,-.8) to [out=100,in=-45] (0.1,.11);
\draw[-,thick] (-.3,-.8) to [out=80,in=-135] (0.06,.11);
        \node at (-.3,-.95) {$\scriptstyle a$};
        \node at (.43,-.95) {$\scriptstyle b$};
\end{tikzpicture}
,\end{align}
\begin{align}\label{sliders}
\begin{tikzpicture}[anchorbase,scale=0.7]
	\draw[-,thick] (0.4,0) to (-0.6,1);
	\draw[-,thick] (0.08,0) to (0.08,1);
	\draw[-,thick] (0.1,0) to (0.1,.6) to (.5,1);
        \node at (0.6,1.13) {$\scriptstyle c$};
        \node at (0.1,1.16) {$\scriptstyle b$};
        \node at (-0.65,1.13) {$\scriptstyle a$};
\end{tikzpicture} = \begin{tikzpicture}[anchorbase,scale=0.7]
	\draw[-,thick] (0.7,0) to (-0.3,1);
	\draw[-,thick] (0.08,0) to (0.08,1);
	\draw[-,thick] (0.1,0) to (0.1,.2) to (.9,1);
        \node at (0.9,1.13) {$\scriptstyle c$};
        \node at (0.1,1.16) {$\scriptstyle b$};
        \node at (-0.4,1.13) {$\scriptstyle a$};
\end{tikzpicture}
,\end{align} 
\begin{align}
\label{symmetric}
\begin{tikzpicture}[anchorbase,scale=0.8]
	\draw[-,thick] (0.28,0) to[out=90,in=-90] (-0.28,.6);
	\draw[-,thick] (-0.28,0) to[out=90,in=-90] (0.28,.6);
	\draw[-,thick] (0.28,-.6) to[out=90,in=-90] (-0.28,0);
	\draw[-,thick] (-0.28,-.6) to[out=90,in=-90] (0.28,0);
        \node at (0.3,-.75) {$\scriptstyle b$};
        \node at (-0.3,-.75) {$\scriptstyle a$};
\end{tikzpicture}
= \begin{tikzpicture}[anchorbase,scale=0.8]
	\draw[-,thick] (0.2,-.6) to (0.2,.6);
	\draw[-,thick] (-0.2,-.6) to (-0.2,.6);
        \node at (0.2,-.75) {$\scriptstyle b$};
        \node at (-0.2,-.75) {$\scriptstyle a$};
\end{tikzpicture}
,\end{align}
\begin{align}
\label{braid}
\begin{tikzpicture}[anchorbase,scale=0.8]
	\draw[-,thick] (0.45,.6) to (-0.45,-.6);
	\draw[-,thick] (0.45,-.6) to (-0.45,.6);
        \draw[-,thick] (0,-.6) to[out=90,in=-90] (-.45,0);
        \draw[-,thick] (-0.45,0) to[out=90,in=-90] (0,0.6);
        \node at (0,-.77) {$\scriptstyle b$};
        \node at (0.5,-.77) {$\scriptstyle c$};
        \node at (-0.5,-.77) {$\scriptstyle a$};
\end{tikzpicture}
 =
\begin{tikzpicture}[anchorbase,scale=0.8]
	\draw[-,thick] (0.45,.6) to (-0.45,-.6);
	\draw[-,thick] (0.45,-.6) to (-0.45,.6);
        \draw[-,thick] (0,-.6) to[out=90,in=-90] (.45,0);
        \draw[-,thick] (0.45,0) to[out=90,in=-90] (0,0.6);
        \node at (0,-.77) {$\scriptstyle b$};
        \node at (0.5,-.77) {$\scriptstyle c$};
        \node at (-0.5,-.77) {$\scriptstyle a$};
\end{tikzpicture}.
\end{align} 
However, we also have various novel relations on the vertical axis that are presented below. We will prove the following in the Appendix:
\begin{align}
    \label{hthickcrossing}
\begin{tikzpicture}[anchorbase,scale=1]
 \draw[-,line width=1pt,dashed,lightblue] (0.3,-.25) to (0.3,1.25);
	\draw[-,line width=1pt] (0,0) to (.6,1);
	\draw[-,line width=1pt] (0,1) to (.6,0);
    \draw[-,line width=1pt] (0.3,1) to (.3,0);
        \node at (0,-.1) {$\scriptstyle a$};
         \node at (0.3,-.1) {$\scriptstyle 2b$};
        \node at (0.6,-0.1) {$\scriptstyle a$};
\end{tikzpicture} =  \sum_{u = 0}^{\min(a, b)}\sum_{t = 0}^{a-u}(-1)^{t+u}
\begin{tikzpicture}[anchorbase, x=0.75pt,y=0.75pt,yscale=-1,xscale=1, scale = 0.3]
\draw[-,line width=1pt,dashed,lightblue] (130,90) to (130,260);
\draw    (110,130) -- (110,190) ; 
\draw    (150,130) -- (150,190) ;
\draw    (130,110) -- (130,230) ;
\draw    (90,110) -- (90,190) ;
\draw    (170,110) -- (170,190) ;
\draw    (90,110) -- (130,150) ;
\draw    (170,110) -- (130,150) ;
\draw    (130,170) -- (110,190) ;
\draw    (150,190) -- (130,170) ;
\draw    (170,190) -- (130,230) ;
\draw    (90,190) -- (130,230) ;
\draw    (150,190) -- (170,230) ;
\draw    (110,190) -- (90,230) ;
\node at (80,160) {$\scriptstyle u$};
\node at (100,160) {$\scriptstyle t$};
 \node at (90,250) {$\scriptstyle a$};
\node at (170,250) {$\scriptstyle a$};
\node at (130,250) {$\scriptstyle 2b$};
\end{tikzpicture} ,
\end{align}
\begin{align}
\label{hswallows}
\begin{tikzpicture}[anchorbase,scale=.7]
\draw[-,line width=1pt,dashed,lightblue] (0.08,-1.25) to (0.08,.75);
	\draw[-,line width=2pt] (0.08,.3) to (0.08,.5);
\draw[-,thick] (-.2,-.8) to [out=45,in=-45] (0.1,.31);
\draw[-,thick] (.36,-.8) to [out=135,in=-135] (0.06,.31);
\draw[-,thick] (0.08,-.8) to (0.08,.31);
        \node at (-.3,-.95) {$\scriptstyle a$};
        \node at (.45,-.95) {$\scriptstyle a$};
         \node at (.08,-.95) {$\scriptstyle 2b$};
\end{tikzpicture} =\begin{tikzpicture}[anchorbase,scale=.7]
\draw[-,line width=1pt,dashed,lightblue] (0.08,-1.25) to (0.08,.75);
\draw[-,line width=2pt] (0.08,.1) to (0.08,.5);
\draw[-,thick] (.46,-.8) to [out=100,in=-45] (0.1,.11);
\draw[-,thick] (-.3,-.8) to [out=80,in=-135] (0.06,.11);
\draw[-,thick] (0.08,-.8) to (0.08,.31);
        \node at (-.3,-.95) {$\scriptstyle a$};
        \node at (.45,-.95) {$\scriptstyle a$};
         \node at (.08,-.95) {$\scriptstyle 2b$};
\end{tikzpicture}
,\end{align}
\begin{align}
\label{hmidsliders}
\begin{tikzpicture}[anchorbase,scale=0.7]
    \draw[-,line width=1pt,dashed,lightblue] (-.32,-1.5) to (-.32,.5);
    \draw[-,thick] (-0.93,0) to (0.29,-1);
     \draw[-,thick] (0.29,0) to (-0.93,-1);
	\draw[-,thick] (-0.32,-1) to (-0.32,-.85) to (-.62,0);
    \draw[-,thick] (-0.32,-1) to (-.32,0);
    \draw[-,thick] (-0.32,-1) to (-0.32,-.85) to (-.02,0);
    \node at (-0.96,.13) {$\scriptstyle a$};
    \node at (-0.64,.13) {$\scriptstyle b$};
    \node at (-.32,.13) {$\scriptstyle 2c$};
    \node at (0,.13) {$\scriptstyle b$};
    \node at (0.32,.13) {$\scriptstyle a$};
\end{tikzpicture} = 
\begin{tikzpicture}[anchorbase,scale=0.7]
       \draw[-,line width=1pt,dashed,lightblue] (-.32,-1.5) to (-.32,.5);
    \draw[-,thick] (-0.93,0) to (0.29,-1);
     \draw[-,thick] (0.29,0) to (-0.93,-1);
	\draw[-,thick] (-0.32,-1) to (-0.32,-.15) to (-.62,0);
  \draw[-,thick] (-0.32,-1) to (-.32,0);
    \draw[-,thick] (-0.32,0) to (-0.32,-.15) to (-.02,0);
      \node at (-0.96,.13) {$\scriptstyle a$};
    \node at (-0.64,.13) {$\scriptstyle b$};
    \node at (-.32,.13) {$\scriptstyle 2c$};
    \node at (0,.13) {$\scriptstyle b$};
    \node at (0.32,.13) {$\scriptstyle a$};
\end{tikzpicture},\end{align}
\begin{align}
\label{hsidesliders}
\begin{tikzpicture}[anchorbase,scale=0.8]
\draw[-,line width=1pt,dashed,lightblue] (0,-1) to (0,1);
	\draw[-,thick] (-0.45,-.6) to (-0.45,-.45) to (-.6, -.3) to (.3, .6);
    \draw[-,thick] (-0.45,-.6) to (-0.45,-.45) to (-.3, -.3) to (.6, .6);
	\draw[-,thick] (0.45,-.6) to (0.45,-.45) to (.6, -.3) to (-.3, .6);
    \draw[-,thick] (0.45,-.6) to (0.45,-.45) to (.3, -.3) to (-.6, .6);
        \draw[-,thick] (0,-0.6) to (0,0.6);
        \node at (-.6,.7) {$\scriptstyle a$};
        \node at (.6,.7) {$\scriptstyle a$};
        \node at (-.3,.7) {$\scriptstyle b$};
        \node at (.3,.7) {$\scriptstyle b$};
        \node at (0,.7) {$\scriptstyle 2c$};
\end{tikzpicture} 
 =
\begin{tikzpicture}[anchorbase,scale=0.8]
\draw[-,line width=1pt,dashed,lightblue] (0,-1) to (0,1);
	\draw[-,thick] (-0.45,-.6) to (0.45,.45) to (.6, .6);
    \draw[-,thick] (-0.45,-.6) to (0.45,.45) to (.3, .6);
	\draw[-,thick] (0.45,-.6) to (-0.45,.45) to (-.6, .6);
    \draw[-,thick] (0.45,-.6) to (-0.45,.45) to (-.3, .6);
        \draw[-,thick] (0,-0.6) to (0,0.6);
        \node at (-.6,.7) {$\scriptstyle a$};
        \node at (.6,.7) {$\scriptstyle a$};
        \node at (-.3,.7) {$\scriptstyle b$};
        \node at (.3,.7) {$\scriptstyle b$};
        \node at (0,.7) {$\scriptstyle 2c$};
\end{tikzpicture} 
,\end{align}
\begin{align}
\label{hbraid}
\begin{tikzpicture}[anchorbase,scale=0.8]
\draw[-,line width=1pt,dashed,lightblue] (0,-1) to (0,1);
	\draw[-,thick] (0.6,.6) to (-0.6,-.6);
	\draw[-,thick] (0.6,-.6) to (-0.6,.6);
\draw[-,thick] (.3,-.6) to[out=90,in=-90] (-.3,0) to (-.3, .6);
\draw[-,thick] (-.3,-.6) to[out=90,in=-90] (.3,0) to (.3, .6);
\draw[-,thick] (0,-0.6) to (0,0.6);
        \node at (-.6,-.77) {$\scriptstyle a$};
        \node at (.6,-.77) {$\scriptstyle a$};
        \node at (-.3,-.77) {$\scriptstyle b$};
        \node at (.3,-.77) {$\scriptstyle b$};
        \node at (0,-.77) {$\scriptstyle 2c$};
\end{tikzpicture}
 =
\begin{tikzpicture}[anchorbase,scale=0.8]
\draw[-,line width=1pt,dashed,lightblue] (0,-1) to (0,1);
	\draw[-,thick] (0.6,.6) to (-0.6,-.6);
	\draw[-,thick] (0.6,-.6) to (-0.6,.6);
\draw[-,thick] (.3,.6) to[out=-90,in=90] (-.3,0) to (-.3, -.6);
\draw[-,thick] (-.3,.6) to[out=-90,in=90] (.3,0) to (.3, -.6);
\draw[-,thick] (0,-0.6) to (0,0.6);
        \node at (-.6,-.77) {$\scriptstyle a$};
        \node at (.6,-.77) {$\scriptstyle a$};
        \node at (-.3,-.77) {$\scriptstyle b$};
        \node at (.3,-.77) {$\scriptstyle b$};
        \node at (0,-.77) {$\scriptstyle 2c$};
\end{tikzpicture}
,\end{align}
\begin{align}
\label{hsymmetric}
\begin{tikzpicture}[anchorbase,scale=0.8]
\draw[-,line width=1pt,dashed,lightblue] (0,-1) to (0,1);
	\draw[-,thick] (0.6,.6) to (-0.6,-.6);
	\draw[-,thick] (0.6,-.6) to (-0.6,.6);
        \draw[-,thick] (.3,-.6) to[out=90,in=-90] (-.45,0);
        \draw[-,thick] (-0.45,0) to[out=90,in=-90] (.3,0.6);
        \draw[-,thick] (-.3,-.6) to[out=90,in=-90] (.45,0);
        \draw[-,thick] (0.45,0) to[out=90,in=-90] (-.3,0.6);
        \draw[-,thick] (0,-0.6) to (0,0.6);
        \node at (-.6,-.77) {$\scriptstyle a$};
        \node at (.6,-.77) {$\scriptstyle a$};
        \node at (-.3,-.77) {$\scriptstyle b$};
        \node at (.3,-.77) {$\scriptstyle b$};
        \node at (0,-.77) {$\scriptstyle 2c$};
\end{tikzpicture}
 =
\begin{tikzpicture}[anchorbase,scale=0.8]
\draw[-,line width=1pt,dashed,lightblue] (0,-1) to (0,1);
	\draw[-,thick] (0.6,.6) to (-0.6,-.6);
	\draw[-,thick] (0.6,-.6) to (-0.6,.6);
        \draw[-,thick] (.3,-.6) to (.3,0.6);
        \draw[-,thick] (-.3,-.6) to (-.3,0.6);
        \draw[-,thick] (0,-0.6) to (0,0.6);
        \node at (-.6,-.77) {$\scriptstyle a$};
        \node at (.6,-.77) {$\scriptstyle a$};
        \node at (-.3,-.77) {$\scriptstyle b$};
        \node at (.3,-.77) {$\scriptstyle b$};
        \node at (0,-.77) {$\scriptstyle 2c$};
\end{tikzpicture}
. \end{align}
\section{Equivalence between $\HWeb$ and $\HSchur$}\label{sec:hweb+hschur}
Our main result we will be proving is that, just as the polynomial web category and the Schur category are equivalent, so too are $\HWeb$ and $\HSchur$ category. This is presented below.
\bigskip
\\*
\noindent
{\bf Main Theorem.} {\em
     There is a an equivalence of categories $\Phi: \HWeb \rightarrow \HSchur$ which is the identity on objects and acts on generating morphisms as}
\[
\begin{tikzpicture}[baseline = -.5mm, anchorbase]
	\draw[-,line width=1pt] (0.28,-.3) to (0.08,0.04);
	\draw[-,line width=1pt] (-0.12,-.3) to (0.08,0.04);
	\draw[-,line width=2pt] (0.08,.4) to (0.08,0);
        \node at (-0.22,-.4) {$\scriptstyle a$};
        \node at (0.35,-.4) {$\scriptstyle b$};
\end{tikzpicture} 
\mapsto \xi_{\left(\begin{smallmatrix}a&       b&0&0\\0&0&b&a\end{smallmatrix}\right)},
\begin{tikzpicture}[baseline = -.5mm, anchorbase]
 \draw[-,line width=1pt,dashed,lightblue] (0.08,-.75) to (0.08,0.7);
	\draw[-,line width=1pt] (0.28,-.3) to (0.08,0.04);
	\draw[-,line width=1pt] (-0.12,-.3) to (0.08,0.04);
    \draw[-,line width=1pt] (0.08,-.3) to (0.08,0.04);
    \draw[-,line width=2pt] (0.08,.4) to (0.08,0);
        \node at (-0.22,-.4) {$\scriptstyle a$};
        \node at (0.07,-.4) {$\scriptstyle 2b$};
        \node at (0.35,-.4) {$\scriptstyle a$};
\end{tikzpicture} 
\mapsto \xi_{\left(\begin{smallmatrix}a&2b&a\end{smallmatrix}\right)},
\begin{tikzpicture}[baseline = -.5mm, anchorbase]
	\draw[-,line width=2pt] (0.08,-.3) to (0.08,0.04);
	\draw[-,line width=1pt] (0.28,.4) to (0.08,0);
	\draw[-,line width=1pt] (-0.12,.4) to (0.08,0);
        \node at (-0.22,.5) {$\scriptstyle a$};
        \node at (0.36,.5) {$\scriptstyle b$};
\end{tikzpicture}
\mapsto\xi_{\left(\begin{smallmatrix}a&0\\b&0\\0&b\\0&a\end{smallmatrix}\right)}, 
\]
\[
\begin{tikzpicture}[baseline = -.5mm, anchorbase]
\draw[-,line width=1pt,dashed,lightblue] (0.08,-.7) to (0.08,0.75);
	\draw[-,line width=2pt] (0.08,-.3) to (0.08,0.04);
	\draw[-,line width=1pt] (0.28,.4) to (0.08,0);
	\draw[-,line width=1pt] (-0.12,.4) to (0.08,0);
 \draw[-,line width=1pt] (0.08,.4) to   (0.08,0);
        \node at (-0.22,.5) {$\scriptstyle a$};
        \node at (0.36,.5) {$\scriptstyle a$};
        \node at (0.07,.5) {$\scriptstyle 2b$};
\end{tikzpicture}
\mapsto \xi_{\left(\begin{smallmatrix}a\\ 2b\\   a\end{smallmatrix}\right)}, 
\begin{tikzpicture}[baseline=-.5mm]
	\draw[-,thick] (-0.3,-.3) to (.3,.4);
	\draw[-,thick] (0.3,-.3) to (-.3,.4);
        \node at (0.3,-.4) {$\scriptstyle b$};
        \node at (-0.3,-.4) {$\scriptstyle a$};
\end{tikzpicture}
\mapsto \xi_{\left(\begin{smallmatrix}0 & b&0&0\\a & 0&0&0\\0 & 0&0&a\\0 &0&b&0\end{smallmatrix}\right)}, 
\begin{tikzpicture}[baseline=-.5mm, anchorbase]
  \draw[-,line width=1pt,dashed,lightblue] (0,-.5) to (0,0.6);
	\draw[-,thick] (-0.3,-.3) to (.3,.4);
	\draw[-,thick] (0.3,-.3) to (-.3,.4);
    \draw[-,thick] (0,-.3) to (0,.4);
        \node at (0.3,-.4) {$\scriptstyle a$};
        \node at (0,-.4) {$\scriptstyle 2b$};
        \node at (-0.3,-.4) {$\scriptstyle a$};
\end{tikzpicture}
\mapsto\xi_{\left(\begin{smallmatrix}0 & 0&a\\ 0&2b&0 \\  a &0 &0\end{smallmatrix}\right)}
.\]
Our proof will be broken down into the following sections:
\begin{enumerate}
 \item Show that $\Phi$ is a well-defined functor by verifying that the defining relations of $\HWeb$ are true in $\HSchur$.
   \item Prove that $\Phi$ is full by constructing for each $A\in \HMat_{\lambda, \mu}$ a diagram $[A]$ that gets sent to $\xi_A$ under $\Phi$.
  \item Prove that the relations of $\HWeb$ ensure the set of all diagrams is spanned by an easy to categorize subset, namely reduced chicken foot diagrams.
\end{enumerate}
\subsection{Verifying Defining Relations}\label{verify}
In this section, we will verify that $\Phi$ is well-defined; the relations that hold in $\HWeb$, namely~\ref{hsplitchoice}-\ref{hcommute}, are preserved in $\HSchur$. This detail was left in~\cite{Brundan} to the reader, though for completeness sake we will prove all of them up to horizontal symmetry.
\bigskip
\\*
{\em Proof of~\ref{hsplitchoice}.} We need to prove the following two equations for composing merges: \[ \xi_{\left(\begin{smallmatrix}
 a+b & c & 0 & 0\\
  0 & 0 & c & a+b\\
\end{smallmatrix}\right)}\cdot \xi_{\left(\begin{smallmatrix}
  a &  b& 0 & 0 & 0 & 0 \\
  0 & 0 & c & 0 & 0 & 0 \\
  0 & 0 & 0 & c & 0 & 0 \\
  0 & 0 & 0 & 0 & b & a \\
\end{smallmatrix}\right)}=\xi_{\left(\begin{smallmatrix}
 a & b+c & 0 & 0\\
  0 & 0 & b+c & a\\
\end{smallmatrix}\right)}\cdot \xi_{\left(\begin{smallmatrix}
  a & 0 & 0 & 0 & 0 & 0 \\
  0 & b & c & 0 & 0 & 0 \\
  0 & 0 & 0 & c & b & 0 \\
  0 & 0 & 0 & 0 & 0 & a \\
 \end{smallmatrix}\right)},\]
\[\xi_{\left(\begin{smallmatrix}
 a+b & 2c & a+b
\end{smallmatrix}\right)}\cdot \xi_{\left(\begin{smallmatrix}
  a &  b& 0 & 0 & 0 \\
  0 & 0 & 2c & 0 & 0 \\
  0 & 0 & 0 & b & a \\
\end{smallmatrix}\right)} = \xi_{\left(\begin{smallmatrix}
 a & 2b+2c & a
\end{smallmatrix}\right)}\cdot \xi_{\left(\begin{smallmatrix}
  a & 0 & 0 & 0 & 0 \\
  0 & b & 2c & b & 0 \\
  0 & 0 & 0 & 0 & a \\
\end{smallmatrix}\right)}.\]
The two equations for composing splits are analogous and are presented below: 
\[\xi_{\left(\begin{smallmatrix}
  a & 0 & 0 & 0\\
  0 & b & 0 & 0\\
  0 & c & 0 & 0\\
  0 & 0 & c & 0\\
  0 & 0 & b & 0\\
  0 & 0 & 0 & a\\
\end{smallmatrix}\right)}\cdot \xi_{\left(\begin{smallmatrix}
 a & 0 \\
 b+c & 0 \\
 0 & b+c \\
 0 & a \\
\end{smallmatrix}\right)} = \xi_{\left(\begin{smallmatrix}
  a & 0 & 0 & 0\\
  b & 0 & 0 & 0\\
  0 & c & 0 & 0\\
  0 & 0 & c & 0\\
  0 & 0 & 0 & b\\
  0 & 0 & 0 & a\\
\end{smallmatrix}\right)}\cdot \xi_{\left(\begin{smallmatrix}
 a+b & 0 \\
 c & 0 \\
 0 & c \\
 0 & a+b \\
\end{smallmatrix}\right)},\]
\[\xi_{\left(\begin{smallmatrix}
  a&0&0\\
  b&0&0\\
  0&2c&0\\
  0&0&b\\
  0&0&a\\
\end{smallmatrix}\right)}\cdot \xi_{\left(\begin{smallmatrix}
 a+b\\2c\\a+b\\
\end{smallmatrix}\right)} =  \xi_{\left(\begin{smallmatrix}
  a&0&0\\
  0&b&0\\
  0&2c&0\\
  0&b&0\\
  0&0&a\\
\end{smallmatrix}\right)}\cdot \xi_{\left(\begin{smallmatrix}
 a\\
 2b+2c\\
 a\\
\end{smallmatrix}\right)}.\]
We will leave these last two equations without formal proof due to the similarity. For our first equation on merges, by Theorem~\ref{hschurrule} it suffices to show that for all $C \in \HMat_{(a+b+c, a+b+c),(a, b, c, c, b, a)}$ we have \[Z\left(\left(\begin{smallmatrix}
 a & b+c & 0 & 0\\
  0 & 0 & b+c & a\\
\end{smallmatrix}\right),\left(\begin{smallmatrix}
  a &  0& 0 & 0 & 0 & 0 \\
  0 & b & c & 0 & 0 & 0 \\
  0 & 0 & 0 & c & b & 0 \\
  0 & 0 & 0 & 0 & 0 & a \\
 \end{smallmatrix}\right), C \right) = Z\left(\left(\begin{smallmatrix}
 a+b & c & 0 & 0\\
  0 & 0 & c & a+b\\
\end{smallmatrix}\right),\left(\begin{smallmatrix}
  a &  b& 0 & 0 & 0 & 0 \\
  0 & 0 & c & 0 & 0 & 0 \\
  0 & 0 & 0 & c & 0 & 0 \\
  0 & 0 & 0 & 0 & b & a \\
\end{smallmatrix}\right), C \right).\]
For either $Z$ value, if it is not zero then with a fixed $(\bi, \bk)\in \Pi_C$ and some $\bj$ we can deduce that the $1$'s, $2$'s, and $3$'s of $\bk$ are paired up with the $1$'s of $\bi$ while the $4$'s, $5$'s, and $6$'s of $\bk$ are paired up with the $2$'s. This forces
\[C = \left( \begin{smallmatrix}
    a & b & c & 0 & 0 & 0\\
    0 & 0 & 0 & c & b & a\\
\end{smallmatrix}\right).\]
Now, with $(\bi, \bk)\in \Pi_C$ fixed, the positions of the $1$'s to $6$'s in $\bk$ uniquely determine a $\bj$ for each $Z$ value. Therefore
  \[\xi_{\left(\begin{smallmatrix}
 a & b+c & 0 & 0\\
  0 & 0 & b+c & a
\end{smallmatrix}\right)}\cdot \xi_{\left(\begin{smallmatrix}
  a &  0& 0 & 0 & 0 & 0 \\
  0 & b & c & 0 & 0 & 0 \\
  0 & 0 & 0 & c & b & 0 \\
  0 & 0 & 0 & 0 & 0 & a 
\end{smallmatrix}\right)} = \xi_{\left(\begin{smallmatrix}
 a+b & c & 0 & 0\\
  0 & 0 & c & a+b
\end{smallmatrix}\right)}\cdot \xi_{\left(\begin{smallmatrix}
  a &  b& 0 & 0 & 0 & 0 \\
  0 & 0 & c & 0 & 0 & 0 \\
  0 & 0 & 0 & c & 0 & 0 \\
  0 & 0 & 0 & 0 & b & a 
\end{smallmatrix}\right)} = \xi_{\left(\begin{smallmatrix}
    a & b & c & 0 & 0 & 0\\
    0 & 0 & 0 & c & b & a
\end{smallmatrix}\right)}.\]
For the second part of Equation~\ref{hsplitchoice}, it likewise suffices to prove that for all $C \in \HMat_{(2a+2b+2c),(a, b, 2c, b, a)}$ we have
\[Z\left(\left(\begin{smallmatrix}
 a+b & 2c & a+b
\end{smallmatrix}\right),\left(\begin{smallmatrix}
  a &  b& 0 & 0 & 0 \\
  0 & 0 & 2c & 0 & 0 \\
  0 & 0 & 0 & b & a \\
\end{smallmatrix}\right), C \right) = Z\left(\left(\begin{smallmatrix}
 a & 2b+2c & a
\end{smallmatrix}\right),\left(\begin{smallmatrix}
  a & 0 & 0 & 0 & 0 \\
  0 & b & 2c & b & 0 \\
  0 & 0 & 0 & 0 & a \\
\end{smallmatrix}\right), C \right).\]Note that only one $C$ exists, namely
\[C = \left( \begin{smallmatrix}
    a & b & 2c & b & a\\
\end{smallmatrix}\right).\]
Now, once a  $(\bi, \bk)\in \Pi_C$ is fixed, the positions of the $1$'s to $5$'s in $\bk$ uniquely determine a $\bj$ for each $Z$ value meaning
\[\xi_{\left(\begin{smallmatrix}
 a+b & 2c & a+b
\end{smallmatrix}\right)}\cdot \xi_{\left(\begin{smallmatrix}
  a &  b& 0 & 0 & 0 \\
  0 & 0 & 2c & 0 & 0 \\
  0 & 0 & 0 & b & a \\
\end{smallmatrix}\right)} = \xi_{\left(\begin{smallmatrix}
 a & 2b+2c & a
\end{smallmatrix}\right)}\cdot \xi_{\left(\begin{smallmatrix}
  a & 0 & 0 & 0 & 0 \\
  0 & b & 2c & b & 0 \\
  0 & 0 & 0 & 0 & a \\
\end{smallmatrix}\right)} = \xi_{\left(\begin{smallmatrix}
 a & b&2c&b & a
\end{smallmatrix}\right)}.\]
{\em Proof of~\ref{htrivial}.} Below are our two equations to prove:
\[\xi_{\left(\begin{smallmatrix}
 a & b & 0 & 0\\
  0 & 0 & b & a
\end{smallmatrix}\right)}
\cdot \xi_{\left(\begin{smallmatrix}
 a & 0 \\
  b & 0 \\
   0 & b \\
    0 & a 
\end{smallmatrix}\right)} = \binom{a+b}{a}\xi_{\left(\begin{smallmatrix}
 a+b & 0 \\
  0 & a+b 
\end{smallmatrix}\right)},\]
\[\xi_{\left(\begin{smallmatrix}
    a & 2b & a 
\end{smallmatrix}\right)}\cdot \xi_{\left(\begin{smallmatrix}
    a \\
    2b\\
    a
\end{smallmatrix}\right)} = 2^a \binom{a+b}{a}\cdot \xi_{\left(\begin{smallmatrix}
    2a+2b
\end{smallmatrix}\right)}.\]
For our first equation, if $Z\left(\left(\begin{smallmatrix} a & b & 0 & 0\\
  0 & 0 & b & a
\end{smallmatrix}\right), \left(\begin{smallmatrix}
 a & 0 \\
  b & 0 \\
   0 & b \\
    0 & a 
\end{smallmatrix}\right), C\right) \neq 0$ then with a fixed $(\bi, \bk) \in \Pi_C$ and $\bj$ counted, we see that the $1$'s in $\bk$ pair with the $1$'s in $\bi$ and the $2$'s in $\bk$ pair with the $2$'s in $\bi$. Therefore 
\[C = \left(\begin{smallmatrix}
 a+b & 0 \\
  0 & a+b 
\end{smallmatrix}\right).\]
Taking $\bi = \bk =  (1,\ldots, 1, 2, \ldots, 2)$ where the tuple consists of $a+b$ $1$'s and $a+b$ $2$'s, there are $\binom{a+b}{a}$ ways to pair up the $a$ $1$'s and the $b$ $2$'s of $\bj$ with the $1$'s of $\bi, \bk$. This uniquely determines the $b$ $3$'s and the $a$ $4$'s of $\bj$ as well so we have
\[Z\left(\left(\begin{smallmatrix}
 a & b & 0 & 0\\
  0 & 0 & b & a
\end{smallmatrix}\right), \left(\begin{smallmatrix}
 a & 0 \\
  b & 0 \\
   0 & b \\
    0 & a 
\end{smallmatrix}\right),\left(\begin{smallmatrix}
 a+b & 0 \\
  0 & a+b 
\end{smallmatrix}\right)\right)= \binom{a+b}{a}\]
and applying Theorem~\ref{hschurrule} proves the first part of Equation~\ref{htrivial}. We also need to show that
\[\xi_{\left(\begin{smallmatrix}
    a & 2b & a 
\end{smallmatrix}\right)}\cdot \xi_{\left(\begin{smallmatrix}
    a \\
    2b\\
    a
\end{smallmatrix}\right)} = 2^a \binom{a+b}{a}\cdot \xi_{\left(\begin{smallmatrix}
    2a+2b
\end{smallmatrix}\right)}.\]
This follows from another combinatorial argument and the fact that $\left(\begin{smallmatrix}
    2a+2b
\end{smallmatrix}\right)$ is the only matrix in $\HMat_{(2a+2b), (2a+2b)}$. Specifically, with $\bi = \bk = (1,1,\ldots, 1)$ which consists of $2a+2b$ $1$'s, by the anti-symmetry of $\bj$ there are $\binom{a+b}{a}$ ways to position its $2$'s and $2^a$ ways to choose between $1$'s and $3$'s.
\bigskip
\\*
{\em Proof of~\ref{hmergesplit}.} We will first prove that
\[ \xi_{\left(\begin{smallmatrix}
b&0  \\
d&0  \\
0&d  \\
0&b 
\end{smallmatrix}\right)}\cdot \xi_{\left(\begin{smallmatrix}
a&c&0&0  \\
0&0&c&a  
\end{smallmatrix}\right)} = \sum_{\substack{0 \leq s \leq \min(a,b)\\0 \leq t \leq \min(c,d)\\t-s=d-a}} \xi_{\left(\begin{smallmatrix}
s&c-t&0&0  \\
a-s&t&0&0  \\
0&0&t&a-s  \\
0&0&c-t&s 
\end{smallmatrix}\right)}\]
for $a+c = b+d$. Applying Theorem~\ref{hschurrule} to the left hand side, if a $Z$ value is not equal to $0$ then for a $(\bi, \bk) \in \Pi_C$ with a valid $\bj$ we have the following observation: for each of $\bi$, $\bk$, all of the $1's$ and $2$'s are paired with all of the $1$'s of $\bj$, and similarly all of the $3's$ and $4$'s are paired with all of the $2$'s of $\bj$. This restricts the possibilities of $C$ down to the ones enumerated on the right hand side. This observation also tells us that the $Z$ values for these matrices are $1$, as the $1$'s and $2$'s of $\bj$ are uniquely determined by the positions of numbers in $\bi, \bk$.
\bigskip
\\*
Next, we need to show that
\[ \xi_{\left(\begin{smallmatrix}
a\\
2b\\
a
\end{smallmatrix}\right)}\cdot \xi_{\left(\begin{smallmatrix}
a&2b&a
\end{smallmatrix}\right)} = \sum_{\substack{0\leq s\leq a, 0\leq t\leq \min(b, a-s)}} \xi_{\left(\begin{smallmatrix}
s&t&a-s-t  \\
t&2b-2t&t  \\
a-s-t&t&s 
\end{smallmatrix}\right)}.\]
Note that the right hand side is enumerating through all matrices of $\HMat_{(b, 2a, b), (b, 2a, b)}$, so it suffices to show that all appropriate $Z$ values equal $1$. This is clear as $\bj = (1,1,\ldots,1)$ with $2a+2b$ $1$'s is the only tuple possible, and it works for any fixed $(\bi, \bk)$.
\bigskip
\\*
{\em Proof of~\ref{commute},~\ref{hcommute}.} Both relations roughly follow from how disjoint numbers in tuples act independently.
\subsection{Fullness of Phi}
Next, we need to prove that $\Phi$ is full. There are three types of homomorphisms $\xi_A$ in $\HSchur$ which are easy to understand and see that they are in the image of $\Phi$.
\begin{enumerate}
\item
If $A$ is a $1 \times n$ row matrix with $180^\circ$ rotational symmetry, we call $\xi_A$ an {\em $n$-fold
merge}. By Theorem~\ref{hschurrule} we can write any $n$-fold merge
$\xi_{\left(\begin{smallmatrix}\lambda_1&\cdots&
      \lambda_{n}\end{smallmatrix}\right)}$
as a composition of generators of the form 
$\xi_{\left(\begin{smallmatrix}a&b & b & a\end{smallmatrix}\right)}$ and $\xi_{\left(\begin{smallmatrix}a&2b&a\end{smallmatrix}\right)}$, which are merges off and on the axis of symmetry respectively.
\item
If $A$ is an $n \times 1$ column matrix with $180^\circ$ rotational symmetry, we call $\xi_A$ an {\em
  $n$-fold split}. As with an $n$-fold merge, any $n$-fold split can be expressed as a composition of generators of the form 
$\xi_{\left(\begin{smallmatrix}a\\b\\b\\a\end{smallmatrix}\right)}$ and $\xi_{\left(\begin{smallmatrix}a\\2b\\a\end{smallmatrix}\right)}$, which are splits off and on the axis of symmetry respectively.
\item
If $A$ is an $n \times n$ matrix with exactly one
non-zero entry in every row and column and exhibiting $180^\circ$ rotational symmetry, we call $\xi_A$ a {\em generalized
  hyper-permutation}. Any generalized hyper-permutation can be expressed as a composition of crossings generators $\xi_{\left(\begin{smallmatrix}0 & b&0&0\\a & 0&0&0\\0 & 0&0&a\\0 &0&b&0\end{smallmatrix}\right)}$ and $\xi_{\left(\begin{smallmatrix}0&0&a\\0&2b&0\\a&0&0\end{smallmatrix}\right)}$.
\end{enumerate}
We will now prove the following useful lemma.
\begin{lemma}\label{splitA}
Suppose that $A \in \HMat_{\lambda,\mu}$ and $B \in \HMat_{\mu,\nu}$
for $\lambda,\mu,\nu$ compositions of $H_n$ and assume that
\begin{itemize}
\item
$A$ has a unique non-zero entry in every column,
so that there is an associated function $\alpha:\nset{\ell(\mu)}\rightarrow \nset{\ell(\lambda)}$
sending $i$ to the unique $j$ such that $a_{j,i} \neq 0$;
\item
$B$ has a unique non-zero entry in every row,
 so that there is an associated function
$\beta:\nset{\ell(\mu)} \rightarrow \nset{\ell(\nu)}$
sending $i$ to the unique $j$ such that $b_{i,j} \neq 0$;
\item
the function
$\gamma:\nset{\ell(\mu)}\rightarrow\nset{\ell(\lambda)}\times\nset{\ell(\nu)},
i\mapsto (\alpha(i),\beta(i))$ is injective.
\end{itemize}
Then $\xi_A \circ \xi_B = \xi_C$
where $C \in \HMat_{\lambda,\nu}$ is the matrix with 
$c_{\alpha(i),\beta(i)}=\mu_i$ for $i \in \nset{\ell(\mu)}$, all other entries being zero.
\end{lemma}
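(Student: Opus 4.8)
The plan is to reduce everything to the ``tricky'' Lemma~\ref{htricky}, exactly as one reduces the analogous statement for $\Web$ to \cite[Lemma~4.1]{Brundan}. By Theorem~\ref{hschurrule} we already know $\xi_A \circ \xi_B = \sum_{C \in \HMat_{\lambda,\nu}} Z(A,B,C)\,\xi_C$, so it suffices to exhibit a single well-chosen pair of representatives $(\bi,\bj) \in \Pi_A$ and $(\bj,\bk) \in \Pi_B$ for which the stabilizer hypothesis $\Stab_{H_n}(\bi) \cap \Stab_{H_n}(\bk) = \Stab_{H_n}(\bj)$ of Lemma~\ref{htricky} holds, and then to read off the unique matrix $C$ with $(\bi,\bk) \in \Pi_C$.

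First I would construct the representatives. Fix any $\bj \in I_\mu$ and define $\bi \in I_\lambda$, $\bk \in I_\nu$ coordinatewise by $\bi_r = \alpha(\bj_r)$ and $\bk_r = \beta(\bj_r)$. The counting conditions are immediate: since $A$ has its unique nonzero column entry equal to $a_{\alpha(i),i} = \mu_i$, we get $\#\{r : \bi_r = j\} = \sum_{i:\alpha(i)=j}\mu_i = \sum_i a_{j,i} = \lambda_j$, so $\bi$ realizes $\lambda$, and likewise $\bk$ realizes $\nu$. The anti-symmetry conditions for $\bi$ and $\bk$ are where the $180^\circ$ symmetry of $A$ and $B$ enters: rotational symmetry forces $\alpha(\ell(\mu)+1-i) = \ell(\lambda)+1-\alpha(i)$ and similarly for $\beta$, so the relation $\bj_d + \bj_{2n+1-d} = \ell(\mu)+1$ propagates to $\bi_d + \bi_{2n+1-d} = \ell(\lambda)+1$ and the analogue for $\bk$. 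The same bookkeeping shows $(\bi,\bj) \in \Pi_A$ and $(\bj,\bk) \in \Pi_B$, because $\bi_r = \alpha(\bj_r)$ means the pair $(\bi_r,\bj_r)$ equals $(\alpha(i),i)$ exactly $\mu_i = a_{\alpha(i),i}$ times and never takes any value $(j,i)$ with $j \neq \alpha(i)$.

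The conceptual heart is verifying the stabilizer equation, and this is exactly where injectivity of $\gamma$ is used. The inclusion $\Stab_{H_n}(\bj) \subseteq \Stab_{H_n}(\bi) \cap \Stab_{H_n}(\bk)$ is automatic because $\bi$ and $\bk$ are functions of $\bj$. For the reverse, observe that the assignment $r \mapsto (\bi_r,\bk_r) = \gamma(\bj_r)$ has the same level sets as $r \mapsto \bj_r$ precisely because $\gamma$ is injective; hence any $h \in H_n$ preserving the level partitions of both $\bi$ and $\bk$ already preserves that of $\bj$, giving $\Stab_{H_n}(\bi) \cap \Stab_{H_n}(\bk) \subseteq \Stab_{H_n}(\bj)$.

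Finally, with the hypothesis of Lemma~\ref{htricky} in hand, that lemma yields $Z(A,B,C)=1$ when $(\bi,\bk) \in \Pi_C$ and $0$ otherwise, so $\xi_A\circ\xi_B = \xi_C$ for the single matrix $C$ whose orbit contains $(\bi,\bk)$. Reading off its entries, $c_{j,k} = \#\{r : \gamma(\bj_r) = (j,k)\}$, which by injectivity of $\gamma$ equals $\mu_i$ when $(j,k) = (\alpha(i),\beta(i))$ and vanishes otherwise; that $C$ lies in $\HMat_{\lambda,\nu}$ is guaranteed by Theorem~\ref{horbits}, since $C$ is the matrix attached to a genuine orbit. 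I expect the only mildly delicate point to be the anti-symmetry verification for $\bi$ and $\bk$, which must invoke the rotational symmetry of $A$ and $B$ rather than being purely formal; everything else is direct counting together with the single application of Lemma~\ref{htricky}.
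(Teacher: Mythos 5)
Your proposal is correct and follows essentially the same route as the paper: choose a representative $\bj \in I_\mu$ (the paper takes the specific tuple $(L_\mu(1),\dots,L_\mu(2n))$, but any choice works), set $\bi = \alpha(\bj)$ and $\bk = \beta(\bj)$, verify the stabilizer equality via injectivity of $\gamma$, and conclude with Lemma~\ref{htricky}. Your write-up simply supplies more of the bookkeeping (in particular the check that the $180^\circ$ symmetry of $A$ and $B$ yields the anti-symmetry conditions for $\bi$ and $\bk$) that the paper leaves implicit.
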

\begin{proof}
Let
$\bj := (L_\mu(1) \dots, L_\mu(2n))$, with $\bi = \alpha(\bj)$ and $\bk = \beta(\bj)$ as the tuples obtained by applying $\alpha$ and $\beta$ to the entries of $\bj$. Observe that $(\bi,\bj) \in \Pi_A, (\bj,\bk) \in \Pi_B$, and
$(\bi, \bk) \in \Pi_C$.
Furthermore, the injectivity of $\gamma$ implies that
$\Stab_{H_n}(\bi) \cap \Stab_{H_n}(\bk) = \Stab_{H_n}(\bj)$. Applying Lemma~\ref{htricky} completes the proof.
\end{proof}
The reason for proving Lemma ~\ref{splitA} is because of the following construction of $A \in \HMat_{\lambda,\mu}$ into three parts.
\begin{enumerate}
\item
Let $A^-$ be the block diagonal
matrix $\diag(A_1,\dots,A_{\ell(\lambda)})$ where $A_i$ is the 
$1 \times n_i$
matrix obtained from the $i$th row of $A$ by removing all entries equal to $0$. Due to $A\in \HMat_{\lambda, \mu}$ exhibiting $180^\circ$ rotational symmetry, this symmetry also holds for $A^-$. In particular $\xi_{A^-}$ can be seen as the composition of 
\[\xi_{\diag(A_1, A_{\ell(\lambda)})},\ldots,\;\xi_{\diag(A_{\frac{\ell(\lambda)-1}{2}}, A_{\frac{\ell(\lambda)+3}{2}})}, \; \xi_{A_{\frac{\ell(\lambda)+1}{2}}}\]
once the appropriate identity morphisms are added. Each term is a generalized merge off or on the axis. Also define $\lambda^-$ to be the composition recording the column sums of
$A^-$, so that $A^- \in \HMat_{\lambda,\lambda^-}$.
The $i$th entry $\lambda^-_i$ of $\lambda^-$
is the $i$th
{non-zero} entry
of the sequence
$a_{1,1},a_{1,2},\dots, a_{1,\ell(\mu)}, a_{2,1},\dots$
that is the {\em row reading} of the matrix $A$.
\item
Let $A^+$ be the block diagonal matrix
$\diag(A^1,\dots,A^{\ell(\mu)})$ where $A^i$ is the 
$n^i \times 1$ matrix obtained from the $i$th column of $A$ by removing all entries equal to
$0$.
As with $A^-$, $A^+$ has a $180^\circ$ rotational symmetry so that $\xi_{A^+}$ can be seen as the composition of 
\[\xi_{\diag(A^1, A^{\ell(\mu)})},\ldots,\;\xi_{\diag(A^{\frac{\ell(\mu)-1}{2}}, A^{\frac{\ell(\mu)+3}{2}})}, \; \xi_{A^{\frac{\ell(\mu)+1}{2}}}\]
once the appropriate identity morphisms are added. Each term is a generalized split off or on the axis. Also define $\mu^+$ to be the composition recording the row sums of $A^+$, so 
that $A^+ \in \HMat_{\mu^+,\mu}$.
The $i$th entry $\mu^+_i$ of $\mu^+$ is the $i$th {non-zero}
entry of the sequence $a_{1,1}, a_{2,1},\dots,a_{\ell(\lambda),1},
a_{1,2},\dots$
that is the {\em column reading} of $A$. Observe that $\ell(\mu^+) = \ell(\lambda^-)$ as they both count the number of non-zero terms of $A$.
\item 
Observe that the
$\lambda^-$ is a rearrangement of $\mu^+$. We can then define $f_1: \nset{\ell(\lambda^-)} \rightarrow \nset{\ell(\lambda)}$ and $f_2:\nset{\ell(\lambda^-)}\rightarrow \nset{\ell(\mu)}$ so that $\lambda_i^-$, the $i$th non-zero entry
of the row reading of $A$, is in row $f_1(i)$ and column $f_2(i)$.
Likewise let
$h_1:\nset{\ell(\mu^+)} \rightarrow \nset{\ell(\lambda)}$ and $h_2:\nset{\ell(\mu^+)} \rightarrow \nset{\ell(\mu)}$ be 
defined so that $\mu_i^+$, the $i$th non-zero entry of the
column reading of $A$, is in row $h_1(i)$ and column $h_2(i)$.
There is then a unique permutation $g \in S_{\ell(\lambda^-)}$ such that
$(f_1(g(i)), f_2(g(i))) = (h_1(i), h_2(i))$ for each $i\in\nset{\ell(\lambda^-)}$.
We have in particular that
$g(\mu^+) = \lambda^-$. Let $A^\circ \in \HMat_{\lambda^-,\mu^+}$ 
be the $\ell(\lambda^-) \times \ell(\lambda^-)$ monomial matrix with $(g(i),i)$-entry equal to $\mu^+_i$ for
$i=1,\ldots,\ell(\lambda^-)$, 
all other entries being zero.
\end{enumerate}
\begin{remark}
    Unlike in ~\cite{Brundan} and the case of the Schur algebra, it is also important to see that the rotational symmetry of $A\in \HMat_{\lambda, \mu}$ creates the needed rotational symmetries for $A^-, A^+, A^\circ$.
\end{remark}
For example, suppose that $A=
\begin{pmatrix} 1&2&0\\1&2&1\\0&2&1\end{pmatrix}$, 
so $\lambda = (3,4,3)$ and $\mu = (2,6,2)$.
Then
\[
A^- = \begin{pmatrix}1 & 2 & 0 & 0 & 0&0&0\\0&0&1&2&1&0&0\\0 & 0 & 0 & 0 &
  0&2&1\end{pmatrix},
A^\circ = \begin{pmatrix}
1&0&0&0&0&0&0\\
0&0&2&0&0&0&0\\
0&1&0&0&0&0&0\\
0&0&0&2&0&0&0\\
0&0&0&0&0&1&0\\
0&0&0&0&2&0&0\\
0&0&0&0&0&0&1
\end{pmatrix},
 A^+ = \begin{pmatrix}1 & 0 & 0 \\ 1 & 0 & 0 \\ 0 & 2 & 0\\ 0 &
  2 & 0\\ 0 & 2 & 0\\0&0&1\\0&0&1\end{pmatrix}.
\]
Also
$\lambda^- = (1,2,1,2,1,2,1)$
and $\mu^+ = (1,1,2,2,2,1,1)$
so that $g = (23)(56)$. 
\begin{lemma}\label{splitxi}
For $A \in \HMat_{\lambda,\mu}$, we have that
$\xi_A = \xi_{A^-} \circ \xi_{A^\circ} \circ \xi_{A^+}$.
\end{lemma}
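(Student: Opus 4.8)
The plan is to realize $\xi_A$ as the asserted triple composite by applying Lemma~\ref{splitA} twice, peeling off first the crossing-after-split composite $\xi_{A^\circ}\circ\xi_{A^+}$ and then the merge $\xi_{A^-}$, using associativity of composition to write $\xi_{A^-}\circ\xi_{A^\circ}\circ\xi_{A^+} = \xi_{A^-}\circ(\xi_{A^\circ}\circ\xi_{A^+})$. The three factors have exactly the structural shapes the lemma demands. Because $A^+ = \diag(A^1,\dots,A^{\ell(\mu)})$ is block diagonal with each block a single column, every \emph{row} of $A^+$ has a unique nonzero entry; dually every \emph{column} of $A^- = \diag(A_1,\dots,A_{\ell(\lambda)})$ has a unique nonzero entry; and $A^\circ$ is monomial, so it has a unique nonzero entry in every row and every column.

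First I would apply Lemma~\ref{splitA} to the pair $(A^\circ, A^+)$ composed over the middle hypercomposition $\mu^+$, with $A^\circ$ the column-unique matrix and $A^+$ the row-unique matrix. The function attached to $A^\circ$ is $\alpha = g$ (since the nonzero entry in column $i$ sits in row $g(i)$), and the one attached to $A^+$ is $\beta = h_2$ (the nonzero entry in row $i$ sits in the column block $h_2(i)$); since $g \in S_{\ell(\lambda^-)}$ is a bijection, the combined map $\gamma = (g,h_2)$ is automatically injective. Lemma~\ref{splitA} then yields $\xi_{A^\circ}\circ\xi_{A^+} = \xi_D$, where $D \in \HMat_{\lambda^-,\mu}$ has $d_{g(i),h_2(i)} = \mu^+_i$. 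Setting $j = g(i)$ and using $g(\mu^+) = \lambda^-$ together with the defining relation $(f_1(g(i)),f_2(g(i))) = (h_1(i),h_2(i))$, which gives $f_2(j) = h_2(g^{-1}(j))$, this rewrites as: row $j$ of $D$ has its single nonzero entry $\lambda^-_j$ in column $f_2(j)$. In particular $D$ has a unique nonzero entry in every row.

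Next I would apply Lemma~\ref{splitA} to $(A^-, D)$ composed over the middle hypercomposition $\lambda^-$, with $A^-$ the column-unique matrix and $D$ the row-unique matrix. The attached functions are $\alpha' = f_1$ (the row of $A$ containing the $j$th nonzero row-reading entry) and $\beta' = f_2$ (its column). Injectivity of $\gamma' = (f_1,f_2)$ is the crux: $(f_1(j),f_2(j))$ is precisely the position in $A$ of the $j$th nonzero entry in the row reading, and distinct entries of the row reading occupy distinct positions, so $\gamma'$ is injective. Lemma~\ref{splitA} then gives $\xi_{A^-}\circ\xi_D = \xi_E$ with $e_{f_1(j),f_2(j)} = \lambda^-_j = a_{f_1(j),f_2(j)}$; since the row reading lists every nonzero entry of $A$ exactly once, $E = A$. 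Combining the two applications, $\xi_{A^-}\circ\xi_{A^\circ}\circ\xi_{A^+} = \xi_{A^-}\circ\xi_D = \xi_A$.

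The genuinely delicate step is the bookkeeping in the first application, in particular the verification that $D$ is row-unique with its nonzero entry lying in column $f_2(j)$; this is where the permutation $g$ and the compatibility $(f_1 g, f_2 g) = (h_1, h_2)$ between the row-reading and column-reading indexings must be combined carefully. By contrast both injectivity checks are essentially free --- one because $g$ is a bijection, the other because a matrix entry is determined by its position --- and each invocation of Lemma~\ref{splitA} rests on Lemma~\ref{htricky}, whose stabilizer hypothesis the injectivity supplies. Finally I would note that the rotational symmetry needed for $D$ and $E$ to lie in the relevant $\HMat$ sets is automatic, since Lemma~\ref{splitA} already outputs an element of $\HMat$.
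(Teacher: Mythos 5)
Your proof is correct and follows essentially the same route as the paper: two applications of Lemma~\ref{splitA}, first to $\xi_{A^\circ}\circ\xi_{A^+}$ with $\alpha=g$, $\beta=h_2$, then to $\xi_{A^-}\circ\xi_D$ using the compatibility $(f_1\circ g, f_2\circ g)=(h_1,h_2)$. If anything, your identification of the second associated function as $f_2$ (via $f_2(j)=h_2(g^{-1}(j))$) is stated more cleanly than in the paper, which writes it as $g\circ h_2$.
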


\begin{proof}
Define $\lambda^-, \mu^+$ and $f_1,f_2,g,h_1,h_2$ as above.
First, we 
apply Lemma~\ref{splitA} 
with $\alpha = g$ and $\beta = h_2$
to deduce that $\xi_{A^\circ} \circ \xi_{A^+} = \xi_B$
for $B \in \HMat_{\lambda^-,\mu}$ defined so that
$b_{g(i), h_2(i)} = \mu^+_i$ for $i=1,\ldots,\ell(\mu^+)$, all other entries
being zero. We can apply the lemma once more with $\alpha = f_1$ and $\beta = g \circ h_2$ 
to show that $\xi_{A^-} \circ \xi_B = \xi_A$.
\end{proof}
Lemma~\ref{splitxi} shows that any $\xi_A$ can be expressed as the composition of merges, a generalized hyper-permutation, and splits. All three of these components (i.e $\xi_{A^-},\xi_{A^\circ}, \xi_{A^+}$) are in the image of $\Phi$ so $\xi_A = \xi_{A^-} \circ \xi_{A^\circ} \circ \xi_{A^+}\in \im(\Phi)$ as needed. This proves that $\Phi$ is a full functor from $\HWeb$ to $\HSchur$.
\subsection{Reduction of Diagrams}
To find a convenient spanning set of $\Hom_{\HWeb}(\mu, \lambda)$, we will proceed in similar steps to~\cite[Lemma 4.9]{Brundan} with the below definitions.
\begin{definition}
Given two hypercompositions $\lambda, \mu$ of $H_n$, a $\lambda\times \mu$ \emph{chicken foot} \emph{diagram} representing a morphism from $\mu$ to $\lambda$ consists of thick strings determined by $\mu$ at the bottom that split into thinner strings which
cross each other in some way before merging back into thick strings determined by $\lambda$ at the top. In particular there also has to be a vertical line of symmetry.
\end{definition}
Importantly, a general web diagram lacks the ordering of splits, crossings, and merges that chicken foot diagrams have. Besides chicken foot diagrams, we can impose a further restriction.
\begin{definition}
    A chicken foot diagram is~\emph{reduced} if there is at most one intersection, merging, or split between every pair of strings in the diagram.
\end{definition}
\begin{remark}
For example, considering Equation~\ref{hswallows} we can observe that both sides are chicken foot diagrams, but only the right hand side is reduced. This is because for the left hand side, the left and right strands intersect twice: at the merge and the crossing. 
\end{remark}
Given a reduced chicken foot diagram with a thickness of $a_{i,j}$ between the $i$-th top vertex and the $j$-th bottom vertex ($a_{i,j}$ possibly zero), we say that the \emph{type} of this diagram is the matrix $A\in \HMat_{\lambda, \mu}$ with coefficients $a_{i,j}$. For example the diagrams
\[\begin{tikzpicture}[anchorbase,scale=0.8]
\draw[-,line width=1pt,dashed,lightblue] (0,-1) to (0,1);
\draw[-,thick] (0.6,.6) to (-0.6,-.6);
\draw[-,thick] (0.6,-.6) to (-0.6,.6);
\draw[-,thick] (.3,-.6) to[out=90,in=-90] (-.3,0) to (-.3, .6);
\draw[-,thick] (-.3,-.6) to[out=90,in=-90] (.3,0) to (.3, .6);
\draw[-,thick] (0,-0.6) to (0,0.6);
\draw[-,thick] (0,-0.6) to (0,0.6);
   \node at (-.6,.7) {$\scriptstyle 1$};
        \node at (.6,.7) {$\scriptstyle 1$};
        \node at (-.3,.7) {$\scriptstyle 2$};
        \node at (.3,.7) {$\scriptstyle 2$};
        \node at (0,.7) {$\scriptstyle 6$};
        \node at (-.6,.7) {$\scriptstyle 1$};
          \node at (.6,-.7) {$\scriptstyle 1$};
        \node at (-.3,-.7) {$\scriptstyle 2$};
        \node at (.3,-.7) {$\scriptstyle 2$};
        \node at (0,-.7) {$\scriptstyle 6$};
        \node at (-.6,-.7) {$\scriptstyle 1$};
\end{tikzpicture} ,
 \quad \quad
\begin{tikzpicture}[anchorbase,scale=0.8]
\draw[-,line width=1pt,dashed,lightblue] (0,-1) to (0,1);
\draw[-,thick] (0.6,.6) to (-0.6,-.6);
	\draw[-,thick] (0.6,-.6) to (-0.6,.6);
\draw[-,thick] (.3,.6) to[out=-90,in=90] (-.3,0) to (-.3, -.6);
\draw[-,thick] (-.3,.6) to[out=-90,in=90] (.3,0) to (.3, -.6);
\draw[-,thick] (0,-0.6) to (0,0.6);
   \node at (-.6,.7) {$\scriptstyle 1$};
        \node at (.6,.7) {$\scriptstyle 1$};
        \node at (-.3,.7) {$\scriptstyle 2$};
        \node at (.3,.7) {$\scriptstyle 2$};
        \node at (0,.7) {$\scriptstyle 6$};
        \node at (-.6,.7) {$\scriptstyle 1$};
          \node at (.6,-.7) {$\scriptstyle 1$};
        \node at (-.3,-.7) {$\scriptstyle 2$};
        \node at (.3,-.7) {$\scriptstyle 2$};
        \node at (0,-.7) {$\scriptstyle 6$};
        \node at (-.6,-.7) {$\scriptstyle 1$};
\end{tikzpicture} \]
both have type \[\left(\begin{smallmatrix}
0 & 0& 0& 0& 1  \\
0 & 0& 0& 2& 0  \\
0 & 0& 6& 0& 0  \\
0 & 2& 0& 0& 0  \\
1 & 0& 0& 0& 0  
\end{smallmatrix}\right)\in \HMat_{(1,2,6,2,1),(1,2,6,2,1)}.\]
Every matrix $A\in \HMat_{\lambda, \mu}$ has a diagram of type $A$, just draw between the $i$-th top vertex and the $j$-th bottom vertex a string of thickness $a_{i,j}$. However, it is not clear that this diagram is unique; in the above example Equation~\ref{hbraid} assures us that the two seemingly distinct diagrams are in fact equal. 
\begin{lemma}\label{onetype}
    For all $A\in \HMat_{\lambda, \mu}$, there is a unique $\lambda \times \mu$ reduced chicken foot diagram with type $A$ up to the relations of $\HWeb$.
\end{lemma}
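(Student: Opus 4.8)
The plan is to prove existence and uniqueness separately.

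\emph{Existence.} Given $A \in \HMat_{\lambda,\mu}$, I would build a reduced chicken foot diagram of type $A$ directly. At the $j$th bottom vertex the thick string has thickness $\mu_j = \sum_i a_{i,j}$; using the generating splits I split it into strings of thicknesses $a_{1,j},\dots,a_{\ell(\lambda),j}$ (discarding the zero entries), route the thickness-$a_{i,j}$ string up to the $i$th top group, and finally merge, at the $i$th top vertex, the incoming strings of thicknesses $a_{i,1},\dots,a_{i,\ell(\mu)}$ into one string of thickness $\lambda_i$. Because $a_{i,j} = a_{\ell(\lambda)+1-i,\ell(\mu)+1-j}$, all of these splits, crossings, and merges can be performed symmetrically about the vertical axis, the central column of $A$ being handled by the on-axis generators; this is exactly the $\xi_{A^-}\circ\xi_{A^\circ}\circ\xi_{A^+}$ layering of Lemma~\ref{splitxi} read as a diagram. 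The result is a chicken foot diagram, and it is reduced since for each pair $(i,j)$ there is a single string of thickness $a_{i,j}$ joining bottom vertex $j$ to top vertex $i$, so any two strings meet at most once.

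\emph{Uniqueness.} Let $D$ and $D'$ be two reduced $\lambda\times\mu$ chicken foot diagrams of type $A$; I want $D=D'$ in $\HWeb$. First I would normalize the split layer: by the associativity relation~\ref{hsplitchoice} (together with its on-axis form and the isotopies~\ref{commute}--\ref{hcommute}), the order in which a bottom thick string splits into its constituent substrings is immaterial, so the split layers of $D$ and $D'$ agree up to relations, outputting the substrings in a fixed left-to-right order; the merge layer is normalized identically. After this normalization $D$ and $D'$ differ only in the middle crossing region, in which one string of thickness $a_{i,j}$ runs from a fixed bottom slot to a fixed top slot for each nonzero entry, with each pair of strings meeting at most once. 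The crossing region therefore realizes a fixed element $w$ of a hyperoctahedral Coxeter group acting on the strand slots, and it does so by a reduced configuration of crossings. I would then invoke the Matsumoto--Tits theorem: any two reduced words for $w$ are connected by the Coxeter braid relations, and each such relation translates into a move available in $\HWeb$. Off the axis, the relation $\sigma_i^2=1$ corresponds to~\ref{symmetric}, the symmetric-type braid relation to~\ref{braid}, and strand--commutation to~\ref{sliders}; on the axis, the extra relation $\sigma_{r-1}\sigma_r\sigma_{r-1}\sigma_r = \sigma_r\sigma_{r-1}\sigma_r\sigma_{r-1}$ corresponds to~\ref{hbraid} and~\ref{hsymmetric}, with~\ref{hmidsliders} and~\ref{hsidesliders} governing how strings slide past the central on-axis string. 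Matching each Coxeter relation with its diagrammatic counterpart shows the two crossing regions are equal, hence $D=D'$.

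\emph{Main obstacle.} The off-axis content is identical to the symmetric-group argument of~\cite[Lemma 4.9]{Brundan} and reduces cleanly to the classical Matsumoto--Tits theorem via~\ref{symmetric} and~\ref{braid}. The genuinely new difficulty is the behaviour of strings crossing the vertical axis and interacting with the central on-axis string: I must verify that the diagrammatic relations~\ref{hbraid}, \ref{hsymmetric}, \ref{hmidsliders}, and~\ref{hsidesliders} correspond exactly to the additional type-$B$ braid relation of the hyperoctahedral group and that they suffice to connect any two reduced symmetric crossing configurations, so that no further on-axis relation is required. Pinning down this correspondence — and checking that the symmetry constraint forces the off-axis braid moves to occur in mirror pairs compatible with the on-axis moves — is the crux of the argument.
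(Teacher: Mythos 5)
Your proposal is correct in outline but the uniqueness step takes a genuinely different route from the paper. For the crossing region, you invoke Matsumoto--Tits: view the symmetric configuration of strand slots as an element of a hyperoctahedral Coxeter group (of rank equal to the number of off-axis slot pairs, plus the distinguished on-axis generator), check that two reduced configurations give reduced words, and match each type-B braid move with a relation of $\HWeb$ (\ref{braid} off the axis, \ref{hbraid} on it, rectilinear isotopy for distant commutation). The paper instead runs an explicit recursive untangling: it isolates the string leaving the first bottom vertex, uses \ref{sliders} (if the string stays on one side) or \ref{braid} together with \ref{hbraid} (if it crosses the axis) to pull all other crossings off that string, and then recurses on the smaller reduced diagram left in the box. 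Your approach buys a cleaner conceptual explanation of \emph{why} the braid relations suffice, at the cost of having to set up the dictionary carefully: you must put the crossing region in generic position (distinct crossing heights, via \ref{commute} and \ref{hcommute}) to read off a word in adjacent-slot transpositions, and verify that ``each pair of strings meets at most once'' is equivalent to reducedness of that word in the type-B length function. Two small corrections: the quadratic relations \ref{symmetric} and \ref{hsymmetric} are not needed for connecting two reduced configurations (Matsumoto--Tits uses braid moves only; the quadratic relations enter only when rewriting non-reduced diagrams), and \ref{hmidsliders}/\ref{hsidesliders} concern splits sliding past crossings, so they belong to the normalization of the split and merge layers against the crossing layer rather than to the pure crossing region. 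Neither point undermines the argument.
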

\begin{proof}
    First of all, due to the generalized merges and splits on and off the axis, two reduced chicken foot diagrams of the same type have equivalent merge and split portions up to Equation~\ref{hsplitchoice}. All that is left to show is that two reduced diagrams encoding the same generalized hyper-permutation are equal in $\HWeb$. We proceed by casework on the string coming from the first vertex of the bottom row.
    \bigskip
    \\*
If this string (indicated in green) does not cross the vertical axis, then each reduced diagram can be written in the form
\[\begin{tikzpicture}[anchorbase, x=0.75pt,y=0.75pt,yscale=-1,xscale=1, scale = 0.5]
\draw[-,line width=1pt,dashed,lightblue]    (180,80) -- (180,200) ;
\draw[green]    (150,90) -- (100,190) ;
\draw    (100,90) -- (100,110) ;
\draw    (120,90) -- (120,110) ;
\draw    (110,90) -- (110,110) ; 
\draw[green]    (210,90) -- (260,190) ; 
\draw    (240,90) -- (240,110) ;
\draw    (260,90) -- (260,110) ;
\draw    (250,90) -- (250,110) ;
\draw    (170,90) -- (170,160) ;
\draw    (180,90) -- (180,160) ;
\draw    (190,90) -- (190,160) ;
\draw    (100,130) -- (140,160) ;
\draw    (110,130) -- (150,160) ;
\draw    (120,130) -- (160,160) ;
\draw    (260,130) -- (220,160) ;
\draw    (250,130) -- (210,160) ; 
\draw    (240,130) -- (200,160) ;
\draw   (100,110) -- (120,110) -- (120,130) -- (100,130) -- cycle ;
\draw   (240,110) -- (260,110) -- (260,130) -- (240,130) -- cycle ;
\draw   (140,160) -- (220,160) -- (220,180) -- (140,180) -- cycle ;
\draw    (140,180) -- (140,190) ;
\draw    (150,180) -- (150,190) ;
\draw    (160,190) -- (160,180) ;
\draw    (170,190) -- (170,180) ;
\draw    (180,190) -- (180,180) ;
\draw    (190,190) -- (190,180) ;
\draw    (200,190) -- (200,180) ;
\draw    (210,190) -- (210,180) ;
\draw    (220,190) -- (220,180) ;
\end{tikzpicture}\]
where each box can be imagined to have a number of crossings on an arbitrary number of strings, rather than just this example with $3$ and $9$ strings for our boxes. By Equation~\ref{sliders} we can pull crossings across our string to get 
\[\begin{tikzpicture}[anchorbase, x=0.75pt,y=0.75pt,yscale=-1,xscale=1, scale = 0.5]
\draw[-,line width=1pt,dashed,lightblue]    (180,80) -- (180,200) ;
\draw[green]    (150,90) -- (100,190) ;
\draw    (100,90) -- (100,130) ;
\draw    (120,90) -- (120,130) ;
\draw    (110,90) -- (110,130) ; 
\draw[green]    (210,90) -- (260,190) ; 
\draw    (240,90) -- (240,130) ;
\draw    (260,90) -- (260,130) ;
\draw    (250,90) -- (250,130) ;
\draw    (170,90) -- (170,160) ;
\draw    (180,90) -- (180,160) ;
\draw    (190,90) -- (190,160) ;
\draw    (100,130) -- (140,160) ;
\draw    (110,130) -- (150,160) ;
\draw    (120,130) -- (160,160) ;
\draw    (260,130) -- (220,160) ;
\draw    (250,130) -- (210,160) ; 
\draw    (240,130) -- (200,160) ;
\draw   (140,160) -- (220,160) -- (220,180) -- (140,180) -- cycle ;
\draw    (140,180) -- (140,190) ;
\draw    (150,180) -- (150,190) ;
\draw    (160,190) -- (160,180) ;
\draw    (170,190) -- (170,180) ;
\draw    (180,190) -- (180,180) ;
\draw    (190,190) -- (190,180) ;
\draw    (200,190) -- (200,180) ;
\draw    (210,190) -- (210,180) ;
\draw    (220,190) -- (220,180) ;
\end{tikzpicture}\]
at which point we can note that the above box is a reduced diagram on fewer strings. It is reduced because Equation~\ref{braid} preserves with multiplicity the set of pairs of strings that cross.
\bigskip
\\*
On the other hand if our initial string crossed the vertical axis then each reduced diagram has the presentation
\[\begin{tikzpicture}[anchorbase, x=0.75pt,y=0.75pt,yscale=-1,xscale=1, scale = 0.5]
\draw[-,line width=1pt,dashed,lightblue]    (180,40) -- (180,200) ;
\draw[green]   (240,50) -- (100,190) ;
\draw    (100,50) -- (100,110) ;
\draw    (110,50) -- (110,110) ;
\draw[green]    (120,50) -- (260,190) ;
\draw    (260,50) -- (260,110) ;
\draw    (250,50) -- (250,110) ; 
\draw    (160,80) -- (120,110) ;
\draw    (180,80) -- (180,160) ;
\draw    (200,80) -- (240,110) ;
\draw    (100,130) -- (140,160) ;
\draw    (110,130) -- (150,160) ;
\draw    (120,130) -- (160,160) ;
\draw    (260,130) -- (220,160) ;
\draw    (250,130) -- (210,160) ;
\draw    (230,130) -- (190,160) ;
\draw   (100,110) -- (130,110) -- (130,130) -- (100,130) -- cycle ;
\draw   (230,110) -- (260,110) -- (260,130) -- (230,130) -- cycle ; 
\draw   (140,160) -- (220,160) -- (220,180) -- (140,180) -- cycle ; 
\draw    (140,180) -- (140,190) ;
\draw    (150,180) -- (150,190) ;
\draw    (160,190) -- (160,180) ;
\draw    (170,190) -- (170,180) ;
\draw    (180,190) -- (180,180) ;
\draw    (190,190) -- (190,180) ;
\draw    (200,190) -- (200,180) ;
\draw    (210,190) -- (210,180) ;
\draw    (220,190) -- (220,180) ;
\draw   (160,60) -- (200,60) -- (200,80) -- (160,80) -- cycle ;
\draw    (160,50) -- (160,60) ;
\draw    (170,50) -- (170,60) ;
\draw    (180,50) -- (180,60) ;
\draw    (190,50) -- (190,60) ;
\draw    (200,50) -- (200,60) ;
\draw    (170,80) -- (130,110) ; 
\draw    (190,80) -- (230,110) ;
\draw    (130,130) -- (170,160) ; 
\draw    (240,130) -- (200,160) ;
\end{tikzpicture}.\]
We can then use Equations~\ref{braid} and~\ref{hbraid} to simplify this diagram down to
\[\begin{tikzpicture}[anchorbase, x=0.75pt,y=0.75pt,yscale=-1,xscale=1, scale = 0.5]
\draw[-,line width=1pt,dashed,lightblue]    (180,40) -- (180,200) ;
\draw[green]   (240,50) -- (100,190) ;
\draw    (100,50) -- (100,130) ;
\draw    (110,50) -- (110,130) ;
\draw[green]    (120,50) -- (260,190) ;
\draw    (260,50) -- (260,130) ;
\draw    (250,50) -- (250,130) ; 
\draw    (160,80) -- (120,110) ;
\draw    (180,50) -- (180,160) ;
\draw    (200,80) -- (240,110) ;
\draw    (100,130) -- (140,160) ;
\draw    (110,130) -- (150,160) ;
\draw    (120,130) -- (160,160) ;
\draw    (260,130) -- (220,160) ;
\draw    (250,130) -- (210,160) ;
\draw    (230,130) -- (190,160) ;
\draw    (120,110) -- (120,130) ;
\draw    (130,110) -- (130,130) ;
\draw    (230,110) -- (230,130) ;
\draw    (240,110) -- (240,130) ;
\draw   (140,160) -- (220,160) -- (220,180) -- (140,180) -- cycle ; 
\draw    (140,180) -- (140,190) ;
\draw    (150,180) -- (150,190) ;
\draw    (160,190) -- (160,180) ;
\draw    (170,190) -- (170,180) ;
\draw    (180,190) -- (180,180) ;
\draw    (190,190) -- (190,180) ;
\draw    (200,190) -- (200,180) ;
\draw    (210,190) -- (210,180) ;
\draw    (220,190) -- (220,180) ;
\draw    (160,50) -- (160,80) ;
\draw    (170,50) -- (170,80) ;
\draw    (190,50) -- (190,80) ;
\draw    (200,50) -- (200,80) ;
\draw    (170,80) -- (130,110) ; 
\draw    (190,80) -- (230,110) ;
\draw    (130,130) -- (170,160) ; 
\draw    (240,130) -- (200,160) ;
\end{tikzpicture}\]
and as before, this leaves us with a box that is a reduced diagram on fewer strings. By recursively applying this algorithm until our box becomes empty or it only has a string on the vertical line of symmetry, any two reduced diagrams representing the same generalized hyper-permutation can be untangled into the same diagram. 
\end{proof}
\begin{definition}
For $A\in \HWeb_{\lambda, \mu}$, we define the diagram $[A]$ as the reduced chicken foot diagram with type $A$. This is well-defined by Lemma~\ref{onetype}.
\end{definition}
Recall that $\HWeb$ is generated by six morphisms: two types of crossings, merges, and splits each. However, by Equations~\ref{switch} and~\ref{hthickcrossing} we can rewrite all crossings in terms of merges and splits, so $\HWeb$ can be generated from just these four morphisms. This motivates us to prove the following lemma.
\begin{lemma}\label{reducedpreserved}
    Given a reduced chicken foot diagram $g$ and a merge or a split $f$, the vertical composition $f\circ g$ can be rewritten as a linear combination of reduced diagrams.
\end{lemma}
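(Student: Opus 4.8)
The plan is to show that $f \circ g$ reduces in two stages: first rewrite it as a linear combination of (possibly non-reduced) chicken foot diagrams, and then reduce each of these to a linear combination of reduced chicken foot diagrams, inducting on the number of crossings. Since $\HWeb$ is generated by merges and splits once crossings are expanded via Equations~\ref{switch} and~\ref{hthickcrossing}, and since $f$ meets only the top merge layer of the chicken foot diagram $g$ (whose top consists entirely of merges), I would localize the argument to the interaction of $f$ with that layer. Throughout, Lemma~\ref{onetype} guarantees that the final answer is expressed in terms of the well-defined diagrams $[A]$. The two cases $f$ a merge and $f$ a split are genuinely different, since the vertical duality that exchanges merges and splits relates ``$f$ on top'' to ``$f$ on the bottom'' rather than the two on-top configurations to each other.

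First, the merge case. If $f$ is a merge joining two adjacent output strands of $g$, each of these strands is itself produced by a generalized merge, so by the associativity of merges---the on- and off-axis forms of Equation~\ref{hsplitchoice}---I can absorb $f$ into the top layer of $g$, producing a single chicken foot diagram $g'$. The diagram $g'$ is in chicken foot order but may fail to be reduced: the only new double interactions are between a pair of thin strands that already crossed in $g$ and now also merge, or between a pair emanating from a common split that now merges. The former is removed by the swallow relation (Equation~\ref{swallows}, or~\ref{hswallows} on the axis), which straightens a crossing lying beneath a merge and strictly lowers the crossing count; the latter is a split--merge bigon collapsed by the trivial relation (Equation~\ref{htrivial}), contributing the binomial (respectively $2^a$-twisted) scalar. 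To bring a crossing adjacent to the relevant merge, or to isolate a bigon, I would first reposition neighbouring strands using the slider relations (Equations~\ref{sliders},~\ref{hmidsliders},~\ref{hsidesliders}) and the braid relations (Equations~\ref{braid},~\ref{hbraid}), which rearrange crossings without changing which pairs of strands meet.

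Second, the split case. Here $f$ is a split sitting directly above a merge in the top layer of $g$, so the junction is a merge followed by a split. I would expand it by the merge--split relation (Equation~\ref{hmergesplit}, with its on-axis counterpart), which replaces this junction by a linear combination of split--crossing--merge configurations acting on the thin strands feeding that merge. Because those thin strands emerge straight from the crossing region of $g$ with no intervening merge, the newly created crossings can be folded into the crossing layer and the newly created merges into the top layer, leaving each summand again in chicken foot order. Each summand is then reduced exactly as in the merge case, using Equations~\ref{swallows},~\ref{hswallows},~\ref{htrivial} together with the slider and braid relations.

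The main obstacle is termination. I would package the reduction as an induction on the total number of crossings, refined lexicographically by the number of redundant (repeated) interactions between pairs of strands, and verify that every application of the swallow and trivial relations strictly decreases this statistic while the slider and braid relations leave the crossing count unchanged---so the rewriting cannot cycle and halts at a linear combination of reduced chicken foot diagrams. The subtle bookkeeping is concentrated on the vertical axis, where one must consistently invoke the hyperoctahedral relations~\ref{hmergesplit},~\ref{hswallows},~\ref{hmidsliders},~\ref{hsidesliders}, and~\ref{hsymmetric} in place of their off-axis analogues and track the extra factors of $2^a$ they introduce; the symmetry constraint means a single on-axis move corresponds to a symmetric pair of off-axis moves that must be carried out coherently.
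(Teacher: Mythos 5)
Your proposal follows essentially the same route as the paper's proof: absorb a merge $f$ into the top layer via the associativity relations of Equation~\ref{hsplitchoice}, expand a split $f$ over the adjacent merge via Equation~\ref{hmergesplit}, and then repair any doubled interactions with Equations~\ref{htrivial},~\ref{swallows},~\ref{hswallows} after repositioning strands with the slider and braid relations. Your explicit termination argument (induction on crossing count, refined by the number of repeated interactions) is a welcome addition that the paper leaves implicit, and the only point you slightly gloss is that in the split case the newly created splits land above the crossing layer and must be pulled down past it using Equations~\ref{sliders},~\ref{hmidsliders}, and~\ref{hsidesliders}, which the paper states explicitly.
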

\begin{proof}
Using Equation~\ref{hsplitchoice} to have well-defined general merges and Equation~\ref{hmergesplit} to change the order of a split and a merge, we have four types of rewrites for each of the cases of $f$. Examples are shown below:
\[
\begin{tikzpicture}[anchorbase,scale=0.015,yscale=-1]
\draw[-,line width=2pt] (180,72) -- (180,50);
\draw[-,line width=1pt]    (180,70) -- (160,90) ;
\draw[-,line width=1pt]    (180,70) -- (200,90) ;
\draw[-,thin]    (160,90) -- (150,120) ;
\draw[-,thin]    (160,90) -- (160,120) ;
\draw[-,thin]    (160,90) -- (170,120) ;
\draw[-,thin]    (200,90) -- (210,120) ;
\draw[-,thin]    (200,90) -- (190,120) ;

\node at (140,65) {$\scriptstyle f$};
\node at (140,105) {$\scriptstyle g$};
\end{tikzpicture}=
\begin{tikzpicture}[anchorbase,scale=0.015,yscale=-1]
\draw[-,line width=2pt] (180,72) -- (180,50);
\draw[-,thin]    (180,70) -- (150,120) ;
\draw[-,thin]    (180,70) -- (160,120) ;
\draw[-,thin]    (180,70) -- (170,120) ;
\draw[-,thin]    (180,70) -- (210,120) ;
\draw[-,thin]    (180,70) -- (190,120) ;
\end{tikzpicture},
\]
\[\begin{tikzpicture}[anchorbase,scale=0.015,yscale=-1]
 \draw[-,line width=1pt,dashed,lightblue] (180,30) to (180,155);
\draw[-,line width=2pt] (180,72) -- (180,50);
\draw[-,line width=1pt]    (180,70) -- (140,100) ;
\draw[-,line width=1pt]    (180,70) -- (220,100) ;
\draw[-,line width=1pt]    (180,70) -- (180,100) ;
\draw[-,thin]    (140,100) -- (130,130) ;
\draw[-,thin]    (140,100) -- (140,130) ;
\draw[-,thin]    (140,100) -- (150,130) ;
\draw[-,thin]    (220,100) -- (230,130) ;
\draw[-,thin]    (220,100) -- (220,130) ;
\draw[-,thin]    (220,100) -- (210,130) ;
\draw[-,thin]    (180,100) -- (190,130) ;
\draw[-,thin]    (180,100) -- (180,130) ;
\draw[-,thin]    (180,100) -- (170,130) ;

\node at (120,75) {$\scriptstyle f$};
\node at (120,115) {$\scriptstyle g$};
\end{tikzpicture}=\begin{tikzpicture}[anchorbase,scale=0.015,yscale=-1]
 \draw[-,line width=1pt,dashed,lightblue] (180,30) to (180,155);
\draw[-,line width=2pt] (180,72) -- (180,50);
\draw[-,thin]    (180,70) -- (130,130) ;
\draw[-,thin]    (180,70) -- (140,130) ;
\draw[-,thin]    (180,70) -- (150,130) ;
\draw[-,thin]    (180,70) -- (230,130) ;
\draw[-,thin]    (180,70) -- (220,130) ;
\draw[-,thin]    (180,70) -- (210,130) ;
\draw[-,thin]    (180,70) -- (190,130) ;
\draw[-,thin]    (180,70) -- (180,130) ;
\draw[-,thin]    (180,70) -- (170,130) ;
\end{tikzpicture},\]
\[
\begin{tikzpicture}[anchorbase,scale=0.015,yscale=-1]
\draw[-,line width=2pt] (180,70) -- (180,92);
\draw[-,line width=1pt]    (180,70) -- (160,50) ;
\draw[-,line width=1pt]    (180,70) -- (200,50) ;
\draw[-,thin]    (180,90) -- (210,120) ;
\draw[-,thin]    (180,90) -- (180,120) ;
\draw[-,thin]    (180,90) -- (150,120) ;
\node at (140,65) {$\scriptstyle f$};
\node at (140,105) {$\scriptstyle g$};
\end{tikzpicture}=\sum \:\begin{tikzpicture}[anchorbase,scale=0.015,yscale=-1]
\draw[-,thin]    (160,50) -- (210,120) ;
\draw[-,thin]    (160,50) -- (180,120) ;
\draw[-,thin]    (160,50) -- (150,120) ;
\draw[-,thin]    (200,50) -- (210,120) ;
\draw[-,thin]    (200,50) -- (180,120) ;
\draw[-,thin]    (200,50) -- (150,120) ;
\end{tikzpicture},
\]
\[
\begin{tikzpicture}[anchorbase,scale=0.015,yscale=-1]
 \draw[-,line width=1pt,dashed,lightblue] (180,30) to (180,140);
	\draw[-,line width=2pt] (180,70) -- (180,92);
\draw[-,line width=1pt]    (180,70) -- (160,50) ;
\draw[-,line width=1pt]    (180,70) -- (180,50) ;
\draw[-,line width=1pt]    (180,70) -- (200,50) ;
\draw[-,thin]    (180,90) -- (210,120) ;
\draw[-,thin]    (180,90) -- (180,120) ;
\draw[-,thin]    (180,90) -- (150,120) ;
\node at (140,65) {$\scriptstyle f$};
\node at (140,105) {$\scriptstyle g$};
\end{tikzpicture}=\sum \:\begin{tikzpicture}
[anchorbase,scale=0.015,yscale=-1]
\draw[-,line width=1pt,dashed,lightblue] (180,30) to (180,140);
\draw[-,thin]    (160,50) -- (210,120) ;
\draw[-,thin]    (160,50) -- (180,120) ;
\draw[-,thin]    (160,50) -- (150,120) ;
\draw[-,thin]    (180,50) -- (210,120) ;
\draw[-,thin]    (180,50) -- (180,120) ;
\draw[-,thin]    (180,50) -- (150,120) ;
\draw[-,thin]    (200,50) -- (210,120) ;
\draw[-,thin]    (200,50) -- (180,120) ;
\draw[-,thin]    (200,50) -- (150,120) ;
\end{tikzpicture}.
\]
However, this diagram may not be reduced. In particular, if $f$ were a merge then two strings which initially intersected each other exactly once due to a crossing or a split may now intersect twice due to $f$. Furthermore, if $f$ were a split then the newly introduced splits on the right hand side of the latter two equations may come after crossings.

To resolve these issues, note that Equations~\ref{htrivial},~\ref{swallows}, and~\ref{hswallows} correct any strings that intersect each other twice due to a merge and a split or crossing. However, these relations apply in isolation. We also need Equations~\ref{sliders},~\ref{braid},~\ref{hmidsliders},~\ref{hsidesliders}, and ~\ref{hbraid} to get our two intersection points next to each other without interference from other strings. 
As for fixing the order of splits and crossings in the case of $f$ being a split, Equations 
~\ref{sliders},~\ref{hmidsliders}, and~\ref{hsidesliders} let us pull our splits past the crossings as needed.
\end{proof}
With this lemma proven, we can now establish a spanning set of $\Hom_{\HWeb}(\mu, \lambda)$.
\begin{lemma}\label{[A] span}
    The set $\Hom_{\HWeb}(\mu, \lambda)$ has spanning set $\{[A]| A\in \HWeb_{\lambda, \mu}\}$.
\end{lemma}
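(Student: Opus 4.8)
The plan is to argue by induction on the number of generating morphisms appearing in a web diagram, using Lemma~\ref{reducedpreserved} as the engine and Lemma~\ref{onetype} to identify the output with the diagrams $[A]$. First I would recall that, by construction, $\Hom_{\HWeb}(\mu,\lambda)$ is spanned over $\mathbb{F}$ by the web diagrams from $\mu$ to $\lambda$, and that each such diagram is a vertical composite $f_k \circ \cdots \circ f_1$ of generating morphisms, each suitably padded by identity strands so that the composite is defined. As observed just before Lemma~\ref{reducedpreserved}, Equations~\ref{switch} and~\ref{hthickcrossing} rewrite every crossing, both off the axis and on the axis, as an $\mathbb{F}$-linear combination of composites of merges and splits. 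Substituting these expressions term by term, every web diagram becomes an $\mathbb{F}$-linear combination of composites in which each $f_i$ is a merge or a split. It therefore suffices to show that any such composite of merges and splits lies in the span of $\{[A] : A \in \HMat_{\lambda,\mu}\}$.

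Next I would run the induction on the length $k$ of the composite $f_k \circ \cdots \circ f_1$. For the base case $k=0$ the composite is the identity morphism on $\mu$, which is the reduced chicken foot diagram $[A_0]$ for $A_0$ the diagonal matrix with entries $\mu_1,\dots,\mu_{\ell(\mu)}$ (this is indeed an element of $\HMat_{\mu,\mu}$, since hypercompositions are palindromic, so the required $180^\circ$ symmetry holds). For the inductive step, write the composite as $f_k \circ D$ where $D = f_{k-1}\circ\cdots\circ f_1$. By the inductive hypothesis $D = \sum_A c_A\,[A]$ is an $\mathbb{F}$-linear combination of reduced chicken foot diagrams, so distributing over composition gives $f_k \circ D = \sum_A c_A\,(f_k \circ [A])$. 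Since $f_k$ is a single merge or split, with identity strands on either side which pass through freely by the rectilinear isotopies~\ref{commute} and~\ref{hcommute}, Lemma~\ref{reducedpreserved} rewrites each $f_k \circ [A]$ as an $\mathbb{F}$-linear combination of reduced chicken foot diagrams. Hence so is $f_k \circ D$, closing the induction.

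Combining the two steps shows that every web diagram, and thus every morphism of $\Hom_{\HWeb}(\mu,\lambda)$, is an $\mathbb{F}$-linear combination of reduced chicken foot diagrams; by Lemma~\ref{onetype} each such diagram equals $[A]$ for a unique $A \in \HMat_{\lambda,\mu}$, yielding the claimed spanning set. Almost all of the real work is already discharged into Lemma~\ref{reducedpreserved}, which performs the local straightening and resolves the failures of reducedness produced by adjoining a single merge or split, and into Lemma~\ref{onetype}, which makes $[A]$ well defined. The only delicate bookkeeping left for this proof is to verify that padding a generator with identity strands keeps us within the hypotheses of Lemma~\ref{reducedpreserved}, which is exactly what~\ref{commute} and~\ref{hcommute} guarantee. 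I expect the one genuine conceptual point, were it not already resolved upstream, to be precisely that reducedness is \emph{not} preserved under arbitrary composition; this is why the induction must feed each new generator through Lemma~\ref{reducedpreserved} one at a time rather than simply concatenating diagrams.
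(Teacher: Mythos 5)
Your proposal is correct and follows essentially the same route as the paper: rewrite crossings via Equations~\ref{switch} and~\ref{hthickcrossing} so that only merges and splits remain, feed generators one at a time through Lemma~\ref{reducedpreserved} to stay within the span of reduced chicken foot diagrams, and finish with Lemma~\ref{onetype}. The paper states this in three sentences; your version merely makes the induction on the number of generators and the identity-padding via~\ref{commute} and~\ref{hcommute} explicit, which is a faithful elaboration rather than a different argument.
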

\begin{proof}
    By Lemma~\ref{reducedpreserved} and the fact that $\HWeb$ can be generated from just merges and splits, $\Hom_{\HWeb}(\mu, \lambda)$ is spanned by the set of $\lambda\times \mu$ reduced chicken foot diagrams. However, by Lemma~\ref{onetype} this set of reduced chicken foot diagrams is itself spanned by $\{[A]| A\in \HWeb_{\lambda, \mu}\}$. The proof follows.
\end{proof}
\subsection{Proof of Equivalence between $\HWeb$ and $\HSchur$}

We can now prove our main theorem, restated below.
\begin{theorem}
\label{catiso}
 There is a an equivalence of categories $\Phi: \HWeb \rightarrow \HSchur$ which is the identity on objects and acts on generating morphisms as
\[
\begin{tikzpicture}[baseline = -.5mm, anchorbase]
	\draw[-,line width=1pt] (0.28,-.3) to (0.08,0.04);
	\draw[-,line width=1pt] (-0.12,-.3) to (0.08,0.04);
	\draw[-,line width=2pt] (0.08,.4) to (0.08,0);
        \node at (-0.22,-.4) {$\scriptstyle a$};
        \node at (0.35,-.4) {$\scriptstyle b$};
\end{tikzpicture} 
\mapsto \xi_{\left(\begin{smallmatrix}a&       b&0&0\\0&0&b&a\end{smallmatrix}\right)},
\begin{tikzpicture}[baseline = -.5mm, anchorbase]
 \draw[-,line width=1pt,dashed,lightblue] (0.08,-.75) to (0.08,0.7);
	\draw[-,line width=1pt] (0.28,-.3) to (0.08,0.04);
	\draw[-,line width=1pt] (-0.12,-.3) to (0.08,0.04);
    \draw[-,line width=1pt] (0.08,-.3) to (0.08,0.04);
    \draw[-,line width=2pt] (0.08,.4) to (0.08,0);
        \node at (-0.22,-.4) {$\scriptstyle a$};
        \node at (0.07,-.4) {$\scriptstyle 2b$};
        \node at (0.35,-.4) {$\scriptstyle a$};
\end{tikzpicture} 
\mapsto \xi_{\left(\begin{smallmatrix}a&2b&a\end{smallmatrix}\right)},
\begin{tikzpicture}[baseline = -.5mm, anchorbase]
	\draw[-,line width=2pt] (0.08,-.3) to (0.08,0.04);
	\draw[-,line width=1pt] (0.28,.4) to (0.08,0);
	\draw[-,line width=1pt] (-0.12,.4) to (0.08,0);
        \node at (-0.22,.5) {$\scriptstyle a$};
        \node at (0.36,.5) {$\scriptstyle b$};
\end{tikzpicture}
\mapsto\xi_{\left(\begin{smallmatrix}a&0\\b&0\\0&b\\0&a\end{smallmatrix}\right)}, 
\]
\[
\begin{tikzpicture}[baseline = -.5mm, anchorbase]
\draw[-,line width=1pt,dashed,lightblue] (0.08,-.7) to (0.08,0.75);
	\draw[-,line width=2pt] (0.08,-.3) to (0.08,0.04);
	\draw[-,line width=1pt] (0.28,.4) to (0.08,0);
	\draw[-,line width=1pt] (-0.12,.4) to (0.08,0);
 \draw[-,line width=1pt] (0.08,.4) to   (0.08,0);
        \node at (-0.22,.5) {$\scriptstyle a$};
        \node at (0.36,.5) {$\scriptstyle a$};
        \node at (0.07,.5) {$\scriptstyle 2b$};
\end{tikzpicture}
\mapsto \xi_{\left(\begin{smallmatrix}a\\ 2b\\   a\end{smallmatrix}\right)}, 
\begin{tikzpicture}[baseline=-.5mm]
	\draw[-,thick] (-0.3,-.3) to (.3,.4);
	\draw[-,thick] (0.3,-.3) to (-.3,.4);
        \node at (0.3,-.4) {$\scriptstyle b$};
        \node at (-0.3,-.4) {$\scriptstyle a$};
\end{tikzpicture}
\mapsto \xi_{\left(\begin{smallmatrix}0 & b&0&0\\a & 0&0&0\\0 & 0&0&a\\0 &0&b&0\end{smallmatrix}\right)}, 
\begin{tikzpicture}[baseline=-.5mm, anchorbase]
  \draw[-,line width=1pt,dashed,lightblue] (0,-.5) to (0,0.6);
	\draw[-,thick] (-0.3,-.3) to (.3,.4);
	\draw[-,thick] (0.3,-.3) to (-.3,.4);
    \draw[-,thick] (0,-.3) to (0,.4);
        \node at (0.3,-.4) {$\scriptstyle a$};
        \node at (0,-.4) {$\scriptstyle 2b$};
        \node at (-0.3,-.4) {$\scriptstyle a$};
\end{tikzpicture}
\mapsto\xi_{\left(\begin{smallmatrix}0 & 0&a\\ 0&2b&0 \\  a &0 &0\end{smallmatrix}\right)}
.\]
\end{theorem}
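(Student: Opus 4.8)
The plan is to assemble the three preceding subsections into the assertion that $\Phi$ is a fully faithful functor that is bijective on objects. First I would invoke Subsection~\ref{verify}: the computations there check that each defining relation \ref{hsplitchoice}--\ref{hcommute} of $\HWeb$ maps to a valid identity of homomorphisms in $\HSchur$ (all verified via Theorem~\ref{hschurrule} and the orbit-counting quantity $Z(A,B,C)$). Since these relations, together with the rectilinear isotopies \ref{commute}--\ref{hcommute}, generate all relations among the six generating morphisms, the prescribed assignment on generators extends to a well-defined functor $\Phi\colon \HWeb \to \HSchur$ that is the identity on objects.

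Next I would record the key bookkeeping fact that $\Phi([A]) = \xi_A$ for every $A \in \HMat_{\lambda,\mu}$. Factoring the reduced chicken foot diagram $[A]$ as its generalized splits at the bottom, generalized hyper-permutation in the middle, and generalized merges at the top, and applying $\Phi$ generator by generator, the image is exactly the composite $\xi_{A^-}\circ \xi_{A^\circ}\circ \xi_{A^+}$; by Lemma~\ref{splitxi} this equals $\xi_A$. In particular every basis element $\xi_A$ of $\HSchur$ lies in the image of $\Phi$, which is fullness (as already deduced following Lemma~\ref{splitxi}).

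The remaining and most delicate step is faithfulness, which I would obtain by a dimension count carried out separately on each morphism space. By Lemma~\ref{[A] span}, the set $\{[A] \mid A \in \HMat_{\lambda,\mu}\}$ spans $\Hom_{\HWeb}(\mu,\lambda)$. Its image under $\Phi$ is $\{\xi_A \mid A \in \HMat_{\lambda,\mu}\}$, which by Theorem~\ref{horbits} and Proposition~\ref{basiscross} is a \emph{basis} of $\Hom_{\HSchur}(\mu,\lambda) \cong \Hom_{H_n}(\mathbb{F}[I_\mu],\mathbb{F}[I_\lambda])$. A spanning set whose image under a linear map is linearly independent must itself be linearly independent and be carried injectively; hence $\{[A]\}$ is in fact a basis of $\Hom_{\HWeb}(\mu,\lambda)$ and $\Phi$ sends it bijectively onto $\{\xi_A\}$. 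Thus $\Phi$ restricts to a linear isomorphism on every Hom space, so it is fully faithful. Since it is also the identity (hence bijective) on objects, $\Phi$ is an equivalence of categories, completing the proof.

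The substantive difficulty is not in this final assembly but in the lemmas it consumes: the hard part is Lemma~\ref{onetype}, that the reduced chicken foot diagram of a prescribed type is unique up to the relations of $\HWeb$, together with Lemma~\ref{reducedpreserved}, which guarantees that postcomposing a reduced diagram with a merge or split never escapes the span of reduced diagrams. The endgame above is the standard maneuver of upgrading a spanning set to a basis once its image is known independent; the only points requiring care are that the count is performed Hom-space by Hom-space and that the on-the-nose identity $\Phi([A]) = \xi_A$ genuinely holds for the \emph{reduced} diagram $[A]$, rather than merely up to the linear combinations of reduced diagrams that arise when reducing an arbitrary web diagram.
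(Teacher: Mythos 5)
Your proposal is correct and follows essentially the same route as the paper: well-definedness from the relation checks, fullness via $\Phi([A])=\xi_A$ through the $A^-,A^\circ,A^+$ factorization and Lemma~\ref{splitxi}, and faithfulness from the spanning set $\{[A]\}$ of Lemma~\ref{[A] span} being carried onto the basis $\{\xi_A\}$. The only difference is cosmetic — you spell out the ``spanning set mapping onto a linearly independent set is itself a basis'' step that the paper leaves implicit.
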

\begin{proof}
    First of all, by our work in Subsection~\ref{verify} we know that $\Phi$ is well-defined. 

Now consider $A\in \HMat_{\lambda, \mu}$ and observe that $[A] = [A^{+}]\circ[A^{\circ}]\circ [A^{-}]$ for  $A^{+}, A^{\circ}, A^{-}$ defined after Lemma~\ref{splitA} since $[A^{+}]$, $[A^{\circ}]$, and $[A^{-}]$ are the merge, crossing, and split portions of the reduced chicken foot diagram $[A]$. It is also straightforward to see that~$\Phi([A^{+}]) = \xi_{A^{+}}$, $\Phi([A^{\circ}]) = \xi_{A^{\circ}}$, and $\Phi([A^{-}]) = \xi_{A^{-}}$. Therefore by Lemma~\ref{splitxi} we get
\[\Phi([A]) =\Phi([A^{+}])\cdot \Phi([A^{\circ}])\cdot \Phi( [A^{-}]) = \xi_{A^{+}}\cdot \xi_{A^{\circ}}\cdot \xi_{A^{-}} = \xi_{A} .\]
The morphisms $\xi_A$ for $A\in \HMat_{\lambda, \mu}$ are a basis of $\Hom_{\HSchur}(\mu, \lambda)$, while Lemma~\ref{[A] span} tells us that the analogous diagrams $[A]$ span $\Hom_{\HWeb}(\mu, \lambda)$. This means that $\Phi$ is faithful, so combined with the work after Lemma~\ref{splitxi} which shows $\Phi$ to be full, our equivalence of categories follows.
\end{proof}
\section{Appendix. Relations}
In this section we will prove the relations of $\HWeb$, Equations~\ref{switch} to~\ref{hsymmetric}. First of all, relations off the vertical axis, namely Equations~\ref{switch} to~\ref{braid}, follow from the work on the Schur algebra in~\cite[Appendix A]{Brundan}. This is because our generating morphisms and relations off the vertical axis are identical to those of $\Web$. Therefore, we will only prove Equations~\ref{hthickcrossing} to~\ref{hsymmetric} which all lie on the vertical axis. For visual clarity, we will only show this axis at beginning and end of a proof.
\begin{lemma}\label{binom}
    For $s$ a non-negative integer, we have
\[ \sum_{u  =0}^{s}(-1)^u\binom{s}{u}= \begin{cases} 
      1 & s=0 \\
      0 & s>0. 
   \end{cases}.\]
\end{lemma}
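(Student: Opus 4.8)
The plan is to recognize this as the alternating sum of binomial coefficients and evaluate it by specializing the binomial theorem. First I would invoke the binomial expansion
\[
(1+x)^s = \sum_{u=0}^{s} \binom{s}{u} x^u,
\]
valid for every integer $s \geq 0$, and then set $x = -1$. This immediately gives
\[
\sum_{u=0}^{s} (-1)^u \binom{s}{u} = (1-1)^s = 0^s,
\]
so the entire content of the lemma reduces to evaluating $0^s$, which is $1$ when $s = 0$ and $0$ when $s > 0$. This matches the two cases in the statement exactly.

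The only point that warrants a moment of care is the degenerate case $s = 0$. Here the sum collapses to its single $u = 0$ term $(-1)^0 \binom{0}{0} = 1$, so the value $1$ is obtained directly without relying on any convention for $0^0$; for $s > 0$ the base $0$ is raised to a positive power and vanishes. Should one wish to bypass the binomial theorem altogether, an alternative is a one-line induction on $s$: using Pascal's rule $\binom{s}{u} = \binom{s-1}{u-1} + \binom{s-1}{u}$ and reindexing, the sum for $s$ splits into two copies of the sum for $s-1$ with opposite signs, which cancel; the base case $s = 0$ is immediate. I would nonetheless present the binomial-theorem argument, as it is the most economical.

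There is no genuine obstacle here: the identity is elementary and self-contained, and the proof is a single substitution. The \emph{hard part}, such as it is, amounts only to phrasing the verification cleanly and handling the $s = 0$ boundary case transparently, both of which the approach above accomplishes in a few lines.
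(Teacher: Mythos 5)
Your proposal is correct and uses essentially the same argument as the paper, which also handles $s=0$ as a trivial case and applies the Binomial Theorem to $(1-1)^s$ for $s>0$. The extra care about the $u=0$ term and the optional Pascal's-rule induction are fine but not needed.
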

\begin{proof}
    The case of $s=0$ is trivial and the case of $s>0$ follows from the Binomial Theorem on the expression $(1-1)^s$.
\end{proof}
\noindent
We will use Lemma~\ref{binom} throughout our proofs in order to reduce summations down to the case where one or more variables equal $0$.
\bigskip
\\*
{\em Proof of~\ref{hthickcrossing}.}
Observe that with Equations~\ref{hsplitchoice} and~\ref{htrivial} we have the following:
\[\sum_{t = 0}^{\min(a, c)}(-1)^t
.\]


\begin{thebibliography}{9}
\bibitem[BEAEO20]{Brundan}
J. Brundan, I. Entova-Aizenbud, P. Etingof, and V. Ostrik, Semisimplification of the category of tilting modules for {$GL_ n$}, {\em Adv. Math.} {\bf 375}, (2020).
\bibitem[CKM14]{CKM}
S. Cautis, J. Kamnitzer and S. Morrison, Webs and quantum skew Howe
duality,
{\em Math. Ann.} {\bf 360} (2014), 351--390. 
\bibitem[DKMZ23]{DKMZ} N. Davidson, J. R. Kujawa, R. Muth, and J. Zhu, Superalgebra deformations of web categories: finite webs; \arxiv{2302.04073}.
\bibitem[Du94]{Du} J. Du, {${\rm IC}$} bases and quantum linear groups, {\em Proc. Sympos. Pure Math.} {\bf 56} (1994), 135--148.
\bibitem[EGHLSVY11]{EGHLSVY} P. Etingof, O. Golberg, S. Hensel, T. Liu, A. Schwendner, D. Vaintrob, and E. Yudovina, Introduction to Representation Theory, {\em American Mathematical Society}, (2011).
\bibitem[Gre97]{Green} R.M. Green, Hyperoctahedral Schur Algebras, {\em J. of Algebra}, {\bf 192} (1997), 418--438.
\bibitem[Iwa64]{Iwahori} N. Iwahori,  On the structure of a {H}ecke ring of a {C}hevalley group over a finite field, {\em J. Fac. Sci. Univ. Tokyo Sect. I}, {\bf 10}, (1964), 215--236.
\end{thebibliography}
\end{document}